\documentclass[10pt,reqno]{amsart}
\usepackage{amsmath}
\usepackage{amsthm}
\usepackage{amssymb}
\usepackage{graphicx}
\usepackage{amsfonts}
\usepackage{latexsym}
\usepackage{setspace}
\usepackage{hyperref}
\usepackage{cite}
\usepackage{multicol}
\usepackage[active]{srcltx}


\newcommand{\B}{\mathcal{B}}
\newcommand{\C}{ \mathbb{C}}
\newcommand{\D}{ \mathbb{D}}
\newcommand{\dD}{ \partial\mathbb{D}}

\newcommand{\norm}[1]{\| #1 \|}

\newcommand{\A}{\mathcal{A}}

\newcommand{\W}{\mathcal{W}}
\newcommand{\T}{\mathcal{T}}
\newcommand{\h}{\mathcal{H}}
\newcommand{\K}{\mathcal{K}}

\renewcommand{\vec}[1]{{\bf #1}}

\renewcommand{\phi}{\varphi}


\numberwithin{equation}{section}
\theoremstyle{plain}
\newtheorem{Proposition}[equation]{Proposition}
\newtheorem{Corollary}[equation]{Corollary}
\newtheorem{Theorem}[equation]{Theorem}
\newtheorem{Lemma}[equation]{Lemma}
\theoremstyle{definition}

\newtheorem{Example}[equation]{Example}
\newtheorem{Remark}[equation]{Remark}

\allowdisplaybreaks



\begin{document}
\bibliographystyle{amsplain}

    \title{Spatial isomorphisms of algebras of truncated Toeplitz operators}

    \author{Stephan Ramon Garcia}
    \address{   Department of Mathematics\\
            Pomona College\\
            Claremont, California\\
            91711 \\ USA}
    \email{Stephan.Garcia@pomona.edu}
    \urladdr{http://pages.pomona.edu/\textasciitilde sg064747}

    \author{William T. Ross}
\address{   Department of Mathematics and Computer Science\\
            University of Richmond\\
            Richmond, Virginia\\
            23173 \\ USA}
    \email{wross@richmond.edu}
    \urladdr{http://facultystaff.richmond.edu/~wross}

\author[W.R.~Wogen]{Warren R. Wogen}
	\address{Department of Mathematics, University of North Carolina, Chapel Hill, North Carolina 27599}
	\email{wrw@email.unc.edu}
	\urladdr{http://www.math.unc.edu/Faculty/wrw/}

    \keywords{Toeplitz operator, model space, truncated Toeplitz operator, reproducing kernel, complex symmetric operator,  conjugation.}
    \subjclass[2000]{47A05, 47B35, 47B99}

    \thanks{First author partially supported by National Science Foundation Grant DMS-1001614.}

    \begin{abstract}
    	We examine when two maximal abelian algebras in the truncated Toeplitz operators are spatially isomorphic. 
This builds upon recent work of N.~Sedlock, who obtained a complete description of the maximal
	algebras of truncated Toeplitz operators.
    \end{abstract}

\maketitle

\section{Introduction}
	Let $H^2$ denote the Hardy space of the open unit disk $\D$, $H^{\infty}$ denote the bounded analytic functions on $\D$,
	and $L^{\infty}:= L^{\infty}(\dD)$, $L^2:= L^2(\dD)$ denote the
	usual Lebesgue spaces on the unit circle $\dD$ \cite{Duren, MR2261424}.
	To each non-constant inner function $\Theta$ we associate the \emph{model space} \cite{CR, N1, N2}
		$$\K_{\Theta} := H^2 \ominus \Theta H^2,$$
	which is a reproducing kernel Hilbert space corresponding to the kernel
	\begin{equation}\label{eq-ReproducingKernel}
		k_{\lambda}(z)
		:= \frac{1 - \overline{\Theta(\lambda)} \Theta(z)}{1 - \overline{\lambda} z}, \quad z,\lambda \in \D.
	\end{equation}
	We sometimes use the notation $k_{\lambda}^{\Theta}$ when we need to emphasize the dependence on the inner function $\Theta$.
	The model space $\K_{\Theta}$ carries the natural \emph{conjugation}
	\begin{equation}\label{eq-ModelConjugation}
		C f := \overline{ f z} \Theta,
	\end{equation}
	defined in terms of boundary functions \cite{G,G-P, G-P-II} and a computation shows that
	\begin{equation} \label{Conj-RK}
		[C k_{\lambda}](z) = \frac{\Theta(z) - \Theta(\lambda)}{z - \lambda}.
	\end{equation}	
	Since each kernel function \eqref{eq-ReproducingKernel} is bounded and since
	their span is dense in $\K_{\Theta}$, it follows that $\K_{\Theta} \cap H^{\infty}$ is dense in $\K_{\Theta}$.
	For each \emph{symbol} $\phi$ in $L^2$ the corresponding \emph{truncated Toeplitz operator}
	$A_{\phi}$ is the densely defined operator on $\K_{\Theta}$ given by the formula
	\begin{equation*}
		A_{\varphi} f := P_{\Theta}(\phi f), \quad f \in H^{\infty} \cap \mathcal{K}_{\Theta},
	\end{equation*}
	where $P_{\Theta}$ is the orthogonal projection of $L^2$ onto $\mathcal{K}_{\Theta}$.
	When we wish to be specific about the inner function $\Theta$, we write
	$A^{\Theta}_{\phi}$.  

	Interest in truncated Toeplitz operators has blossomed over the last few years
	\cite{BBK, BCFMT, MR2597679, TTOSIUES, MR2440673, NLEPHS, MR2468883, MR2418122, Sed},
	sparked by a series of illuminating observations and open problems provided by D.~Sarason \cite{Sarason}.
	Although one can pursue the subject of unbounded truncated Toeplitz operators
	much further \cite{MR2468883, MR2418122}, we focus here on those $A_{\phi}$ which 
	have a \emph{bounded} extension to $\K_{\Theta}$ and we denote this set by $\mathcal{T}_{\Theta}$.
	One can show that $\T_{\Theta}$ is weakly closed \cite[Thm.~4.2]{Sarason} and contains
	$A_{\phi}$  whenever $\phi \in L^{\infty}$. 
	On the other hand, every $A_{\phi} \in \T_{\Theta}$ can be represented by an unbounded symbol \cite[Thm.~3.1]{Sarason}. In fact, 
	\begin{equation} \label{Sarason-zero}
	A_{\phi_1}  = A_{\phi_2} \Leftrightarrow \phi_1 - \phi_2 \in \Theta H^2 + \overline{\Theta H^2}.
	\end{equation}
	Moreover, a recent preprint \cite{BCFMT} has revealed that there are bounded truncated Toeplitz operators $A_{\phi}$
	which cannot be represented by a bounded symbol.

	For a given pair of inner functions $\Theta_1$ and $\Theta_2$, Cima and the current authors recently obtained
	necessary and sufficient conditions for $\T_{\Theta_1}$ and $\T_{\Theta_2}$ to be
	\emph{spatially isomorphic} \cite{TTOSIUES}, meaning there exists
	 a unitary operator $U: \K_{\Theta_1} \to \K_{\Theta_2}$ such that $\T_{\Theta_1} = U^{*} \T_{\Theta_2} U$.
	We denote this relationship by $\T_{\Theta_1} \cong \T_{\Theta_2}$.
	In this paper we examine when certain \emph{algebras} of truncated Toeplitz operators are spatially isomorphic.

	Although $\T_{\Theta}$ is \emph{not} an algebra of operators (a simple counterexample can be deduced 
	from \cite[Thm.~5.1]{Sarason}),	 it does contain certain algebras of interest.  Two examples are 
	\begin{equation}\label{eq-Analytic}
		\{A_{\phi}: \phi \in H^{\infty}\},
	\end{equation}
	the set of \emph{analytic} truncated Toeplitz operators on $\K_{\Theta}$
	and
	\begin{equation} \label{eq-Co-Analytic}
	 \{A_{\overline{\phi}}: \phi \in H^{\infty}\},
	 \end{equation} 
	 the corresponding set of 
	\emph{co-analytic} truncated Toeplitz operators.  Algebras of the form \eqref{eq-Analytic}
	are of particular interest since a seminal result of D.~Sarason  \cite{Sarason-NF} states that \eqref{eq-Analytic} is precisely
	the commutant of the \emph{compressed shift} $A_z$ on $\mathcal{K}_{\Theta}$.

	Recently, N.~Sedlock \cite{Sed} determined all of the maximal abelian algebras in $\T_{\Theta}$. 
	These algebras $\B_{\Theta}^{a}$, where the parameter $a$  belongs to the extended complex plane 
	$\widehat{\C} := \C \cup \{\infty\}$, are described in detail in Section \ref{SectionSedlock}. 
	The purpose of this paper is to determine when two such \emph{Sedlock algebras} 
	are spatially isomorphic to each other.   In particular, we develop a precise condition 
	describing when $\B_{\Theta}^{a} \cong \B_{\Theta}^{a'}$. 
	For certain inner functions $\Theta$, there will be many $a \neq a'$ for which $\B_{\Theta}^{a} \cong \B_{\Theta}^{a'}$. 
	For others, it will be the case that $\B_{\Theta}^{a} \cong \B_{\Theta}^{a'}$ if and only if $a = a'$. 
 
	We also address the question as to whether or not the notion of spatial isomorphism can be replaced by the weaker notion of isometric isomorphism. 
	For example, given a finite Blaschke product $\Theta$ with distinct zeros, we will show that the algebras 
	$\B_{\Theta}^{a}$ and $\B_{\Theta}^{a'}$ are spatially isomorphic if and only if they are isometrically isomorphic. 
	As a consequence, we will show, for finite Blaschke products $\Theta_1, \Theta_2$, each with distinct zeros,  that the corresponding quotient algebras $H^{\infty}/\Theta_1 H^{\infty}$ and 
	$H^{\infty}/\Theta_2 H^{\infty}$ are isometrically isomorphic if and only if there is a unimodular constant 
	$\zeta$ and a disk automorphism $\psi$ such that $\Theta_1 = \zeta \Theta_2 \circ \psi$.
	
	An important reason to consider the problem of spatial isomorphisms of Sedlock algebras is that it gives us a useful tool to address the question: Which operators are unitarily equivalent to analytic truncated Toeplitz operators (which turn out to be the commutant of the compressed shift)? The authors in \cite{GPR} examine this question for matrices. Since the analytic truncated Toeplitz operators on some model space $\mathcal{K}_{\Theta}$ are the Sedlock algebra $\mathcal{B}^{0}_{\Theta}$, this naturally leads us to consider  spatial isomorphisms of Sedlock algebras. The results of this paper will show that if an operator $T$ is unitarily equivalent to an operator in some Sedlock algebra, with the parameter $a \not \in \dD$, then $T$ is unitarily equivalent to an analytic truncated Toeplitz operator.

\section{Sedlock algebras}\label{SectionSedlock}

	In \cite{Sed} N.~Sedlock examined the following subclasses of $\T_{\Theta}$.   For $a \in \C$, define 
	\begin{equation*}
		\B_{\Theta}^{a} := \left\{A_{\phi + a \overline{A_{z} C \phi} + c} \in \mathcal{T}_{\Theta}: \phi \in \K_{\Theta}, c \in \C\right\}.
	\end{equation*}
	The $C$ appearing in the previous line is the conjugation in \eqref{eq-ModelConjugation} on the model space $\K_{\Theta}$.
	Following Sedlock, one can extend the definition of $\B_{\Theta}^{a}$ to $a = \infty$ by adopting
	the convention that $\B_{\Theta}^{\infty}$ denotes the set of co-analytic truncated Toeplitz operators on $\K_{\Theta}$ from \eqref{eq-Co-Analytic}.
	
	In light of the fact that the map $\phi \mapsto \phi + a \overline{A_{z} C \phi}$ is \emph{linear},
	it follows immediately that each $\B_{\Theta}^a$ is a linear subspace of $\T_{\Theta}$.
	One of the main theorems of Sedlock's paper \cite{Sed} is that each $\B_{\Theta}^{a}$ is actually an abelian algebra. 
	We therefore refer to the algebras $\B_{\Theta}^a$ as \emph{Sedlock algebras}.

	Sedlock also observed that
	\begin{equation} \label{Sed-conj-flip}
		A \in \B_{\Theta}^a \quad \Leftrightarrow\quad
		 A^{*} \in \B_{\Theta}^{1/\overline{a}},
	\end{equation}
	whence the definition of $\B^{\infty}_{\Theta}$ 
	consistent with the fact that
	$\B^{0}_{\Theta} = \{A^{\Theta}_{\phi}: \phi \in H^{\infty}\}$ consists of the analytic truncated
	Toeplitz operators.  Indeed, we have $(\B^{0}_{\Theta})^{*} = \B^{\infty}_{\Theta}$. 

	Sedlock algebras can be described in several different, but equivalent, ways. 
	For each $a \in \D^{-} = \{|z| \leq 1\}$, one can consider the following rank-one perturbation of $A_{z}$ on $\mathcal{K}_{\Theta}$:
	\begin{equation}\label{eq-GeneralizedShift}
		S_{\Theta}^{a} := A_{z} + \frac{a}{1 - \overline{\Theta(0)} a} k_{0} \otimes C k_{0}.
	\end{equation}
	A result of Sarason shows that these rank-one perturbations of $A_{z}$ belong to $\T_{\Theta}$ \cite{Sarason}.
	In fact, for $a \in \dD$ one obtains the so-called Clark unitary operators \cite{CMR, MR0301534, MR2198367}.
	
	\begin{Remark} \label{Clark-remark}
	Let us take a moment to briefly describe some facts about these Clark operators $S_{\Theta}^{a}$, $a \in \dD$, since they will appear later on. See \cite{CMR, MR0301534, MR2198367} for more details. If $a \in \dD$, then 
	$$\Re\left(\frac{a + \Theta}{a - \Theta}\right)$$ is a positive harmonic function on $\D$ and so, by the Herglotz theorem \cite[p.~2]{Duren}, there is a positive  finite measure $\mu_{a}$ on $\dD$ with 
	$$\Re\left(\frac{a + \Theta(z)}{a - \Theta(z)}\right) = \int_{\dD} \frac{1 - |z|^2}{|\zeta - z|^2} d \mu_{a}(\zeta).$$
	The family of measures $\{\mu_a:  a \in \dD\}$ obtained in this way are called the Clark measures (sometimes called  Aleksandrov-Clark measures) for $\Theta$ and they turn out to be the spectral measures for $S_{\Theta}^a$, i.e., $S_{\Theta}^{a}$ is unitarily equivalent to the multiplication operator $g \mapsto \zeta g$ on $L^{2}(\mu_a)$.

	One can show that a carrier for $\mu_a$ is 
	$$E_{a} :=\left \{\zeta \in \dD: \lim_{r \to 1^{-}} \Theta(r \zeta) = a\right\},$$ 
	i.e., $\mu_{a}(\dD \setminus E_{a}) = 0$. 
	Since $\mu_a$ is carried by $E_{\alpha}$, a set of Lebesgue measure zero, it is singular with respect to Lebesgue measure.
	For example, if $\Theta$ is an $n$-fold Blaschke product, then $E_{a}$ is the set of $n$ (distinct)  points $\{\zeta_1, \zeta_2, \ldots, \zeta_n\} \subset \dD$ for which $\Theta(\zeta_j) = a$ and $\mu_a$ is given by 
	\begin{equation} \label{inner-atoms-intro}
	\mu_a = \sum_{j = 1}^{n} \frac{1}{|\Theta'(\zeta_j)|} \delta_{\zeta_j}.
	\end{equation}
	 If $\Theta$ is the atomic inner function 
	$$\Theta(z) = e^{-\frac{1 + z}{1 - z}},$$
	then, for each $a \in \dD$, $E_{a}$ is a countable set which clusters only at $\zeta = 1$. Moreover 
	$$\mu_a = \sum_{\Theta(\zeta) = a} \frac{|\zeta - 1|^2}{2} \delta_{\zeta}.$$
	\end{Remark}
	
	The following observation, essentially due to Sedlock \cite{Sed}, provides yet another description of $\B_{\Theta}^{a}$.
	
	\begin{Lemma} \label{alternate-description} 
	For each $a \in \widehat{\C}$ we have 
	\begin{equation} \label{Sed-right-form}
\mathcal{B}_{\Theta}^{a} = \{A_{\psi} \in \mathcal{T}_{\Theta}: \psi = \phi_0 (1 + a \overline{\Theta}) + c, \phi_0 \in \mathcal{K}_{\Theta}, \phi_{0}(0) = 0, c \in \C\}.
\end{equation}
	\end{Lemma}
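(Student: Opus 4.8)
The plan is to collapse the definition of $\B_{\Theta}^{a}$ to the form \eqref{Sed-right-form} by a single symbol computation, carried out modulo the subspace $\Theta H^{2}+\overline{\Theta H^{2}}$ which, by \eqref{Sarason-zero}, consists exactly of the symbols representing the zero operator. The one genuine computation I would do first is the identity
\begin{equation*}
	\overline{A_{z}C\phi}=(\phi-\phi(0))\,\overline{\Theta},\qquad \phi\in\K_{\Theta}.
\end{equation*}
To see this, note $A_{z}C\phi=P_{\Theta}(z\,C\phi)$ and that, from $C\phi=\overline{\phi z}\,\Theta$ together with $|z|=1$ on $\dD$, one has $z\,C\phi=\overline{\phi}\,\Theta$; this function already lies in $H^{2}$ (it is $z\,C\phi$ with $C\phi\in\K_{\Theta}$), so subtracting off its component in $\Theta H^{2}$ — equivalently, using $A_{z}^{*}=A_{\overline{z}}$, the $C$-symmetry relation $A_{z}C=CA_{z}^{*}$, and $A_{\overline{z}}\phi=\overline{z}(\phi-\phi(0))$ — gives $A_{z}C\phi=\overline{\phi}\,\Theta-\overline{\phi(0)}\,\Theta$. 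Conjugating yields the displayed formula.

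Substituting this into the definition, the symbol of a generic element of $\B_{\Theta}^{a}$ becomes
\begin{equation*}
	\phi+a\,\overline{A_{z}C\phi}+c=\phi(1+a\overline{\Theta})+c-a\phi(0)\overline{\Theta},
\end{equation*}
and since $\overline{\Theta}\in\overline{\Theta H^{2}}$, the term $-a\phi(0)\overline{\Theta}$ represents the zero operator, so by \eqref{Sarason-zero} we get $\B_{\Theta}^{a}=\{A_{\phi(1+a\overline{\Theta})+c}:\phi\in\K_{\Theta},\ c\in\C\}$. It remains to reduce from $\phi\in\K_{\Theta}$ to $\phi_{0}\in\K_{\Theta}$ with $\phi_{0}(0)=0$. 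Since $k_{0}$ is the reproducing kernel at the origin, $\K_{\Theta}\ominus\C k_{0}=\{f\in\K_{\Theta}:f(0)=0\}$, so I would write $\phi=\phi_{0}+\lambda k_{0}$ with $\phi_{0}(0)=0$ and $\lambda\in\C$; using $k_{0}=1-\overline{\Theta(0)}\Theta$ and $\Theta\overline{\Theta}=1$ one expands
\begin{equation*}
	k_{0}(1+a\overline{\Theta})=(1-a\overline{\Theta(0)})+a\overline{\Theta}-\overline{\Theta(0)}\Theta,
\end{equation*}
whose last two terms lie in $\overline{\Theta H^{2}}+\Theta H^{2}$. Hence $\phi(1+a\overline{\Theta})\equiv\phi_{0}(1+a\overline{\Theta})+\lambda(1-a\overline{\Theta(0)})$ modulo $\Theta H^{2}+\overline{\Theta H^{2}}$, and absorbing the constant $\lambda(1-a\overline{\Theta(0)})$ into $c$ gives one inclusion in \eqref{Sed-right-form}; the reverse inclusion is immediate on taking $\phi=\phi_{0}$ in the definition. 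This settles every $a\in\C$.

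For $a=\infty$ I would invoke $\B_{\Theta}^{\infty}=(\B_{\Theta}^{0})^{*}$ (noted after \eqref{Sed-conj-flip}) together with the case $a=0$ just proved, namely $\B_{\Theta}^{0}=\{A_{\phi_{0}+c}:\phi_{0}\in\K_{\Theta},\ \phi_{0}(0)=0,\ c\in\C\}$. Taking adjoints gives $\B_{\Theta}^{\infty}=\{A_{\overline{\phi_{0}}+c}:\phi_{0}\in\K_{\Theta},\ \phi_{0}(0)=0,\ c\in\C\}$. Finally, for $\phi_{0}\in\K_{\Theta}$ with $\phi_{0}(0)=0$ set $\psi_{0}:=z\,C\phi_{0}$; by the formula above $\psi_{0}=A_{z}C\phi_{0}\in\K_{\Theta}$, clearly $\psi_{0}(0)=0$, and $\overline{\phi_{0}}=\psi_{0}\overline{\Theta}$, while $\phi_{0}\mapsto\psi_{0}$ is an involution of $\{f\in\K_{\Theta}:f(0)=0\}$ onto itself. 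Thus $\B_{\Theta}^{\infty}=\{A_{\psi_{0}\overline{\Theta}+c}:\psi_{0}\in\K_{\Theta},\ \psi_{0}(0)=0,\ c\in\C\}$, which is \eqref{Sed-right-form} at $a=\infty$ under the usual convention that $1+a\overline{\Theta}$ is read there as $\overline{\Theta}$.

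The only point requiring care is the bookkeeping of the constant terms: constants need not lie in $\K_{\Theta}$ (precisely when $\Theta(0)\neq0$), so the passage from $\phi$ to $\phi_{0}$ cannot be performed term by term but only modulo $\Theta H^{2}+\overline{\Theta H^{2}}$, exploiting $k_{0}\equiv1$. Everything else reduces to the identity for $\overline{A_{z}C\phi}$ and routine algebra.
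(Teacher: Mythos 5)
Your argument is correct; the computational heart is the same identity the paper uses (namely that $A_{z}C\phi$ equals $\overline{\phi}\Theta$ up to a constant multiple of $\Theta$), but your route is more self-contained. The paper starts from Sedlock's already-normalized description of $\mathcal{B}_{\Theta}^{a}$ (symbols $\phi_{0}+a\overline{A_{z}C\phi_{0}}+ck_{0}$ with $\phi_{0}(0)=0$ quoted from \cite{Sed}), so after computing $A_{z}C\phi_{0}=\overline{\phi_{0}}\Theta$ it only needs $A_{k_{0}}=I$ to finish; you instead work from the raw definition of $\mathcal{B}_{\Theta}^{a}$ with unrestricted $\phi\in\K_{\Theta}$, prove the general identity $\overline{A_{z}C\phi}=(\phi-\phi(0))\overline{\Theta}$, and then carry out the normalization to $\phi_{0}(0)=0$ yourself via the decomposition $\K_{\Theta}=\{f(0)=0\}\oplus\C k_{0}$, the expansion of $k_{0}(1+a\overline{\Theta})$, and the zero-symbol criterion \eqref{Sarason-zero}. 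You also treat $a=\infty$ explicitly (via $(\B_{\Theta}^{0})^{*}=\B_{\Theta}^{\infty}$ and the involution $\phi_{0}\mapsto zC\phi_{0}$ on $\{f\in\K_{\Theta}:f(0)=0\}$), together with the needed convention for reading \eqref{Sed-right-form} at $a=\infty$, a point the paper leaves implicit. What each approach buys: the paper's proof is shorter because it imports the refined form from \cite{Sed} as a black box, while yours establishes that refinement along the way and so depends only on the definition, \eqref{eq-ModelConjugation}, and \eqref{Sarason-zero}. All the individual steps you use ($A_{\overline{z}}\phi=\overline{z}(\phi-\phi(0))$ on $\K_{\Theta}$, the $C$-symmetry relation, $\Theta\overline{\Theta}=1$ on $\dD$, and the bookkeeping of constants modulo $\Theta H^{2}+\overline{\Theta H^{2}}$) check out.
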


\begin{proof}
		It is shown in \cite{Sed} that
		\begin{equation*} 
		\mathcal{B}_{\Theta}^{a} = \{A_{\psi} \in \mathcal{T}_{\Theta}: \psi = \phi_{0} + a\overline{A_{z} C \phi_{0}} + ck_0, \phi_0 \in \mathcal{K}_{\Theta}, \phi_{0}(0) = 0, c \in \C\}.
		\end{equation*}
		Since the function $\overline{\phi_{0}} \Theta$ belongs to $\K_{\Theta}$ (easily checked from the definition of $\mathcal{K}_{\Theta}$) it follows that
		\begin{equation*}
			A_{z} C \phi_{0} = P_{\Theta} (z \overline{z} \overline{\phi_{0}} \Theta) 
			= P_{\Theta}( \overline{\phi_0} \Theta) =  \overline{\phi_0} \Theta,
		\end{equation*}
		from which, using the fact that $A_{k_0} = I$, we get the desired conclusion. 
		\end{proof}

	Sedlock algebras can also be described succinctly in terms of commutants.
	Recall that for a collection $\A$ of bounded operators on a Hilbert space $\h$, the \emph{commutant} $\A'$ of $\A$
	is defined to be the set of all bounded operators on $\h$ which commute with every member of $\A$.

	\begin{Theorem}[Sedlock] \label{Sed-commutant}
		For any inner function $\Theta$ we have the following.
		\begin{enumerate}\addtolength{\itemsep}{0.5\baselineskip}
			\item For $a \in \D^{-}$, $\B_{\Theta}^{a} = \{S_{\Theta}^{a}\}'$.
			\item For $a \in \widehat{\C} \setminus \D^{-}$, $\B_{\Theta}^{a} = \{(S_{\Theta}^{1/\overline{a}})^{*}\}'$.
			\item If $a \neq a'$, then $\mathcal{B}^{a}_{\Theta} \cap \mathcal{B}^{a'}_{\Theta} = \C I$.
		\end{enumerate}
	\end{Theorem}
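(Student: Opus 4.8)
The plan is to establish the three parts in order, with (2) a quick consequence of (1) and (3) handled by a symbol comparison. Since $\T_\Theta$ is weakly closed and each $\B_\Theta^a$ is maximal abelian inside it, each $\B_\Theta^a$ is automatically weakly closed; I will use this. For (1), the inclusion $\B_\Theta^a\subseteq\{S_\Theta^a\}'$ is the easy direction: it is enough to verify $S_\Theta^a\in\B_\Theta^a$, after which abelianness of $\B_\Theta^a$ does the rest. To see $S_\Theta^a\in\B_\Theta^a$, write $S_\Theta^a=A_z+\frac{a}{1-\overline{\Theta(0)}a}\,k_0\otimes Ck_0$ and match it against Lemma~\ref{alternate-description}, using Sarason's explicit symbol for the rank-one truncated Toeplitz operator $k_0\otimes Ck_0$; the useful elementary fact here is the identity $A_zCk_0=-\Theta(0)k_0$. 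Granting (1), part (2) follows at once: \eqref{Sed-conj-flip} gives $\B_\Theta^a=(\B_\Theta^{1/\overline a})^{*}$ and $(\{T\}')^{*}=\{T^{*}\}'$, so for $a\in\widehat\C\setminus\D^{-}$ (where $1/\overline a\in\D$, with $1/\overline\infty:=0$) part (1) yields $\B_\Theta^a=\{(S_\Theta^{1/\overline a})^{*}\}'$.

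The substance is the reverse inclusion $\{S_\Theta^a\}'\subseteq\B_\Theta^a$ in (1), and I would split it according to where $a\in\D^{-}$ sits. If $a=0$ this is precisely Sarason's theorem that the commutant of the compressed shift $A_z$ is $\{A_\phi:\phi\in H^\infty\}=\B_\Theta^{0}$. If $|a|=1$, then $S_\Theta^a$ is a Clark unitary, unitarily equivalent by Remark~\ref{Clark-remark} to multiplication by $\zeta$ on $L^{2}(\mu_a)$; since the trigonometric polynomials are uniformly dense on the support of $\mu_a$, this operator is $*$-cyclic, so $\{S_\Theta^a\}'$ is the weak-operator closure of the polynomials in $S_\Theta^a$ and $(S_\Theta^a)^{*}$. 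As $(S_\Theta^a)^{*}\in\B_\Theta^{1/\overline a}=\B_\Theta^{a}$ (because $1/\overline a=a$ on the circle), the weakly closed algebra $\B_\Theta^a$ contains all those polynomials, hence their closure, so $\{S_\Theta^a\}'\subseteq\B_\Theta^a$. Finally, if $0\neq a\in\D$, I would reduce to the case $a=0$ via the Crofoot transform: with $\Theta_a:=\frac{a-\Theta}{1-\overline a\Theta}$ (again inner) and the unitary $Uf:=\frac{\sqrt{1-|a|^{2}}}{1-\overline a\Theta}f$ from $\K_\Theta$ onto $\K_{\Theta_a}$, one checks that $US_\Theta^aU^{*}$ is a unimodular multiple of the compressed shift $A_z^{\Theta_a}$, so $\{S_\Theta^a\}'=U^{*}\B_{\Theta_a}^{0}U$, and it then remains to see that the Crofoot map carries $\B_{\Theta_a}^{0}$ onto $\B_\Theta^a$. (Alternatively, and uniformly for $a\in\D^{-}$: if $T$ commutes with $S_\Theta^a$ then, again using $A_zCk_0=-\Theta(0)k_0$, one checks that $T-A_zTA_z^{*}$ has the rank-$\le 2$ form characterizing $\T_\Theta$, so $T=A_{\chi+\overline\eta}$ with $\chi,\eta\in\K_\Theta$; the commutation relation then pins down $\overline\eta=a\bigl(\chi-\chi(0)\bigr)\overline\Theta$, which is the shape appearing in Lemma~\ref{alternate-description}.)

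For part (3), suppose $a\neq a'$ and $A_\psi\in\B_\Theta^a\cap\B_\Theta^{a'}$. By Lemma~\ref{alternate-description} we may write both $\psi=\phi_0(1+a\overline\Theta)+c$ and $\psi=\phi_0'(1+a'\overline\Theta)+c'$ with $\phi_0,\phi_0'\in\K_\Theta$, $\phi_0(0)=\phi_0'(0)=0$, and $c,c'\in\C$; subtracting and invoking \eqref{Sarason-zero},
\[
(\phi_0-\phi_0')+(a\phi_0-a'\phi_0')\overline\Theta+(c-c')\in\Theta H^{2}+\overline{\Theta H^{2}}.
\]
Since $a\phi_0-a'\phi_0'$ vanishes at $0$, the function $\overline{(a\phi_0-a'\phi_0')}\,\Theta$ lies in $\K_\Theta$ and vanishes at $0$, so $(a\phi_0-a'\phi_0')\overline\Theta\in\overline{\K_\Theta}$ contributes nothing to the analytic part; comparing analytic parts and testing against $\phi_0-\phi_0'\in\K_\Theta$ (which vanishes at $0$) forces $\phi_0=\phi_0'$. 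Then the displayed membership reduces to $(a-a')\phi_0\overline\Theta+(c-c')\in\Theta H^{2}+\overline{\Theta H^{2}}$, and the same kind of test applied to $\overline{\phi_0}\,\Theta\in\K_\Theta$ (again vanishing at $0$), together with $a\neq a'$, forces $\phi_0=0$. Hence $\psi$ is constant and $A_\psi\in\C I$. The case in which $a$ or $a'$ equals $\infty$ is identical upon reading the corresponding symbol as $\phi_0\overline\Theta+c$, and the case $\dim\K_\Theta=1$ is trivial.

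The step I expect to be the real work is the case $0\neq a\in\D$ of part (1): in the Crofoot approach, verifying the intertwining $US_\Theta^aU^{*}=A_z^{\Theta_a}$ (up to a unimodular constant) together with the symbol-level identity carrying $\B_{\Theta_a}^{0}$ to $\B_\Theta^a$; in the direct approach, the commutator computation that pins down $\overline\eta$. Neither is conceptually deep, but that is where the genuine calculation sits; everything else rests on Sarason's commutant theorem, the relation \eqref{Sarason-zero}, Clark theory, and routine model-space bookkeeping.
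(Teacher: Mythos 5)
The paper itself offers no proof of Theorem \ref{Sed-commutant}: it is quoted from Sedlock's paper \cite{Sed}, so there is nothing in-text to compare your argument against. Judged on its own, your plan for parts (2) and (3) is sound: (2) is indeed the formal flip via \eqref{Sed-conj-flip} together with $(\{T\}')^{*}=\{T^{*}\}'$, and your part (3) is a correct symbol comparison using Lemma \ref{alternate-description}, \eqref{Sarason-zero}, and the observation that $\overline{\phi_0}\Theta\in\K_\Theta$ when $\phi_0\in\K_\Theta$ vanishes at $0$.

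The genuine gaps sit in part (1). First, there is a circularity: you justify weak closedness of $\B_\Theta^{a}$ by saying it is maximal abelian, but in this paper (and in Sedlock's development) both weak closedness and maximal abelianness of $\B_\Theta^{a}$ are \emph{consequences} of Theorem \ref{Sed-commutant} (a commutant is automatically weakly closed, and maximal abelianness is deduced right after the theorem); the same goes for \eqref{H/aH} and Proposition \ref{BWS}, which are derived from the commutant description. So in the case $|a|=1$ your argument, as written, assumes what is to be proved. The honest route there is either to transport $L^{\infty}(\mu_a)$ multiplication operators through the Clark unitary and verify directly that they are truncated Toeplitz operators with symbols of the Sedlock form, or to use Wermer's theorem to get $\{S_\Theta^{a}\}'=\W(S_\Theta^{a})\subseteq\T_\Theta$ (weak closedness of $\T_\Theta$ is legitimate here) and then still prove that a truncated Toeplitz operator commuting with $S_\Theta^{a}$ lies in $\B_\Theta^{a}$ --- which is the computation you deferred. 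Second, the decisive calculations are only gestured at: the symbol matching showing $S_\Theta^{a}\in\B_\Theta^{a}$, and, for $0\neq a\in\D$, the symbol-level verification that the Crofoot transform carries $\B_{\Theta_a}^{0}$ onto $\B_\Theta^{a}$; you cannot borrow \eqref{H/aH} for this, since in the paper it is obtained \emph{from} the commutant theorem, so it must be proved by hand. Finally, in your alternative direct approach, knowing that $[T,A_z]$ has rank at most $2$ is not Sarason's membership criterion for $\T_\Theta$; one needs $T-A_zTA_z^{*}$ to have the specific form involving $k_0$, and extracting that (and then pinning down the co-analytic part of the symbol) from the commutation relation is precisely the substance of Sedlock's proof, which your sketch leaves undone.
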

	
	As a consequence of Theorem \ref{Sed-commutant}, one sees that $\B_{\Theta}^a$, being the commutant of an operator,  is weakly closed.
	Sedlock goes on to show that each $\B_{\Theta}^a$ is a maximal algebra in $\T_{\Theta}$
	in the sense that every algebra in $\T_{\Theta} $ is contained in some Sedlock algebra $\B_{\Theta}^a$.
	We should also point out that Sedlock algebras are maximal in another natural sense. 
	Recall that an algebra $\A  \subset  \B(\h)$ is called \emph{maximal abelian} if 
	$\mathcal{A} = \mathcal{A}'$.  Since every algebra in $\T_{\Theta}$ is abelian \cite{Sed},
	it follows immediately from Theorem \ref{Sed-commutant} that every
	Sedlock algebra is maximal abelian.

It turns out that every member of a Sedlock algebra $\B_{\Theta}^a$ with $a \in \widehat{\C} \setminus \dD$ 
can be represented by a bounded symbol \cite{Sed}.  This is significant since
 there exists an inner function $\Theta$ and an $a \in \dD$ such that $\B_{\Theta}^a$ contains
a truncated Toeplitz operator which does not have a bounded symbol \cite{BCFMT}.

	Part (i) of Theorem \ref{Sed-commutant} asserts that the Sedlock algebra
	$\B_{\Theta}^{a}$, for $a \in \D^{-}$, is the commutant of $S_{\Theta}^{a}$.  However, we can say a bit more.
	For a bounded operator $A$ on a Hilbert space, we let $\W(A)$ denote the weak closure of $\{ p(A): \text{$p(z)$ a polynomial}\}$.
	In particular,  observe that $\W(A) \subseteq \{A\}'$.  
	
	\begin{Proposition} \label{BWS}
		For any inner function $\Theta$ we have the following. 
		\begin{enumerate}\addtolength{\itemsep}{0.5\baselineskip}
			\item If $a \in \D^{-}$, then $\B_{\Theta}^a = \W(S_{\Theta}^a)$.
			\item If $a \in \widehat{\C} \setminus \D^{-}$, then $\B_{\Theta}^a = \W((S_{\Theta}^{1/\overline{a}})^{*})$.
		\end{enumerate}
	\end{Proposition}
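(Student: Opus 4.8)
The plan is to prove the equality by establishing the two inclusions separately. One direction is free: for any bounded operator $A$ one has $\W(A) \subseteq \{A\}'$, so Theorem~\ref{Sed-commutant} already gives $\W(S_{\Theta}^a) \subseteq \B_{\Theta}^a$ for $a \in \D^{-}$ and $\W((S_{\Theta}^{1/\overline a})^{*}) \subseteq \B_{\Theta}^a$ for $a \in \widehat{\C}\setminus\D^{-}$. Everything thus reduces to the reverse inclusions, and it suffices to treat part (i): the adjoint map is a homeomorphism of the bounded operators in the weak operator topology, so $\W(T)^{*} = \W(T^{*})$ for every $T$, while \eqref{Sed-conj-flip} gives $(\B_{\Theta}^{1/\overline a})^{*} = \B_{\Theta}^a$. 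Since $a \in \widehat{\C}\setminus\D^{-}$ forces $1/\overline a \in \D$, applying part (i) to $1/\overline a$ yields $\B_{\Theta}^a = (\B_{\Theta}^{1/\overline a})^{*} = \W(S_{\Theta}^{1/\overline a})^{*} = \W((S_{\Theta}^{1/\overline a})^{*})$, which is part (ii).

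For part (i) the underlying mechanism is the following elementary fact. Suppose $T$ is a contraction with a cyclic vector $e$, and suppose that for every $A \in \{T\}'$ there is a sequence of polynomials $(p_n)$ with $\sup_n \|p_n\|_{H^{\infty}(\D)} < \infty$ and $p_n(T)e \to Ae$. Then $\{T\}' = \W(T)$: von Neumann's inequality bounds $\|p_n(T)\| \le \|p_n\|_{H^{\infty}(\D)}$ uniformly in $n$; on the dense subspace $\{q(T)e : q \text{ a polynomial}\}$ one has $p_n(T)q(T)e = q(T)p_n(T)e \to q(T)Ae = Aq(T)e$; and a uniformly bounded sequence converging on a dense set converges in the strong operator topology, so $A$ is the strong limit of the $p_n(T)$ and hence lies in $\W(T)$. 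Thus for each $a \in \D^{-}$ it suffices to realize $S_{\Theta}^a$ as a contraction with a cyclic vector and to produce, for each $A \in \B_{\Theta}^a = \{S_{\Theta}^a\}'$, a uniformly bounded sequence of polynomials realizing the required approximation on that cyclic vector.

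Two cases arise according to whether $a$ lies in the open disk or on the circle. If $a \in \D$, a Crofoot transform identifies $S_{\Theta}^a$ unitarily with the compressed shift $A_z$ on a model space $\K_{\Theta'}$ for a suitable inner function $\Theta'$ (this is in Sarason~\cite{Sarason}); since the same unitary carries $\B_{\Theta}^a$ onto $\B_{\Theta'}^{0}$ and $\W(S_{\Theta}^a)$ onto $\W(A_z^{\Theta'})$, we may assume $a = 0$. In that case $A_z = A_z^{\Theta}$ is a completely non-unitary contraction, $k_0$ is cyclic for it, $\B_{\Theta}^{0} = \{A_{\phi}:\phi\in H^{\infty}\}$, and $A_{\phi} = \phi(A_z)$ is the $H^{\infty}$-functional calculus; taking $(p_n)$ to be the Ces\`aro means of the Taylor series of $\phi$ gives $\|p_n\|_{H^{\infty}(\D)} \le \|\phi\|_{\infty}$ and $p_n \to \phi$ boundedly pointwise on $\D$, whence $p_n(A_z) \to A_{\phi}$ strongly. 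If instead $a \in \dD$, then by Remark~\ref{Clark-remark} the Clark operator $S_{\Theta}^a$ is unitarily equivalent to multiplication by the independent variable on $L^2(\mu_a)$, where the Clark measure $\mu_a$ is singular with respect to Lebesgue measure $m$ on $\dD$; here the constant function $1$ is cyclic (polynomials are dense in $L^2(\mu_a)$ by Szeg\H{o}'s theorem, since $\mu_a \perp m$), the commutant corresponds to $\{M_{\psi} : \psi \in L^{\infty}(\mu_a)\}$, and for a given $\psi \in L^{\infty}(\mu_a)$ one needs polynomials $(p_n)$, bounded in $H^{\infty}(\D)$, with $p_n \to \psi$ in $L^2(\mu_a)$.

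Producing those last polynomials with control on their sup norms is the crux of the argument, and it is where the singularity of $\mu_a$ enters essentially. By inner regularity, after discarding arbitrarily little $\mu_a$-mass we may take $\psi$ continuous on a compact set $E \subseteq \dD$ with $m(E) = 0$; the Rudin--Carleson interpolation theorem extends such a $\psi$ to a function $G$ in the disk algebra with $\|G\|_{H^{\infty}(\D)}$ comparable to $\|\psi\|_{L^{\infty}(\mu_a)}$, and the Ces\`aro means of $G$, after a diagonalization over an exhaustion of the carrier of $\mu_a$ by such sets $E$, furnish polynomials that are uniformly bounded in $H^{\infty}(\D)$ and converge to $\psi$ in $L^2(\mu_a)$. (Equivalently, one may quote the classical fact that for a measure singular with respect to $m$ the analytic polynomials are weak-$*$ dense, with uniformly bounded norms, in $L^{\infty}(\mu_a)$.) I expect this norm control to be the only real difficulty: mere $L^2(\mu_a)$-density of the polynomials is immediate and would already place $\B_{\Theta}^a$ inside the von Neumann algebra generated by $S_{\Theta}^a$, but to land inside the weakly closed algebra $\W(S_{\Theta}^a)$ generated by polynomials one needs the approximants uniformly bounded, so that von Neumann's inequality applies.
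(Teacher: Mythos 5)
Your proof is correct, and its overall skeleton coincides with the paper's: the trivial inclusion $\W(A)\subseteq\{A\}'$ combined with Theorem \ref{Sed-commutant}, the reduction of (ii) to (i) by taking adjoints via \eqref{Sed-conj-flip}, and the Crofoot reduction of $a\in\D$ to $a=0$ via \eqref{Sarason-Crofoot} and \eqref{H/aH}. Where you genuinely diverge is in how the two hard cases are settled. For $a\in\dD$ the paper is purely citational: cyclicity plus Fuglede and the double commutant theorem give $\{S_{\Theta}^{a}\}'=\W^{*}(S_{\Theta}^{a})$, and Wermer's theorem on singular unitaries upgrades $\W^{*}$ to $\W$; your Lusin/Rudin--Carleson/Ces\`aro construction of polynomials $p_n$ with $\sup_n\|p_n\|_\infty<\infty$ and $p_n\to\psi$ in $L^2(\mu_a)$ is in effect a self-contained proof of the needed case of Wermer's theorem, trading the citation for an argument and correctly identifying the singularity of $\mu_a$ and the uniform norm bound as the crux (without the bound one only lands in the von Neumann algebra, which is exactly the gap Wermer's theorem closes). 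For $a=0$ the paper works on $H^2$: commutant lifting gives $\B_{\Theta}^{\infty}=\{T_{\overline{z}}\}'|\K_{\Theta}=\W(T_{\overline{z}})|\K_{\Theta}$ and weak limits restrict to $\K_\Theta$; you instead work directly on $\K_\Theta$, using Sarason's commutant theorem on the analytic side, the identity $p(A_z)=A_p$, Ces\`aro means, and von Neumann's inequality --- the same essential input (Sarason's theorem), packaged differently. Your unifying lemma (a cyclic contraction for which every commuting operator is approximated on the cyclic vector by uniformly bounded polynomials has $\{T\}'=\W(T)$) is a pleasant device the paper does not formulate, and it even lets you bypass the Sz.-Nagy--Foias continuity of the $H^\infty$ calculus you invoke: since $k_0$ is cyclic, it suffices that $p_n(A_z)k_0=P_{\Theta}p_n\to P_{\Theta}\phi=A_{\phi}k_0$ in $H^2$, which Fej\'er's theorem already supplies. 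What your route buys is a more elementary, hands-on proof resting on Rudin--Carleson rather than on Wermer's theorem and commutant lifting restriction; what the paper's route buys is brevity, since both operator-algebraic facts are quoted rather than reproved.
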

	
	The remainder of this section concerns Proposition \ref{BWS} and its proof. We state
	 a number of preliminary observations which will be useful later on.  
	Let us begin by observing that if $a \in \dD$, then
	$S_{\Theta}^a$ is a Clark unitary operator.  It is well-known, and discussed earlier in Remark \ref{Clark-remark},  that all such operators are cyclic 
	and possess a singular spectral measure on $\dD$ which is carried by the set $\{\Theta = a\}$.  
	Since $S_{\Theta}^a$ is cyclic, it follows from Fuglede's Theorem and the Double Commutant Theorem  that
	$\{S_{\Theta}^a\}'$ is the von Neumann algebra $\W^{*}(S_{\Theta}^a)$ generated by 
	$S_{\Theta}^a$ \cite{MR1721402}.  Since $S_{\Theta}^a$ is a singular unitary, an
	old result of J.~Wermer says that $\W(S_{\Theta}^a) = \W^{*}(S_{\Theta}^a)$ \cite[Thm.~6]{Wermer-normal}.  
	This establishes Proposition \ref{BWS} when $a \in \dD$.
	
	\begin{Remark} \label{Clark-needed}
	From the previous paragraph and from Remark \ref{Clark-remark}, we see that when $a \in \dD$, $\mathcal{B}_{\Theta}^{a}$ is spatially isomorphic to $L^{\infty}(\mu_a)$, where we think of $L^{\infty}(\mu_a)$ as the algebra of multiplication operators on $L^2(\mu_a)$ with symbols from $L^{\infty}(\mu_a)$. This was also observed by Sedlock \cite{Sed}.
	\end{Remark}
	
	To prove Proposition \ref{BWS} in the special case when $a = 0$, we require the following lemma
	which will itself prove useful later on.

	\begin{Lemma}\label{aiszero}
		For any inner function $\Theta$ we have $\mathcal{W}(S^{0}_{\Theta}) = \mathcal{B}^{0}_{\Theta}$.
	\end{Lemma}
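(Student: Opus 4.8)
The plan is to start from the observation that putting $a=0$ into the formula \eqref{eq-GeneralizedShift} gives $S^0_\Theta = A_z$, the compressed shift, so the claim reduces to showing that the weakly closed algebra generated by $A_z$ is exactly the analytic truncated Toeplitz operators $\mathcal{B}^0_\Theta = \{A_\phi : \phi \in H^\infty\}$.

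One inclusion is essentially free. Every polynomial in $A_z$ commutes with $A_z$, so $\mathcal{W}(A_z) \subseteq \{A_z\}'$ (as already noted above in general form); by Sarason's commutant theorem $\{A_z\}' = \mathcal{B}^0_\Theta$, and a commutant is weakly closed, whence $\mathcal{W}(A_z) \subseteq \mathcal{B}^0_\Theta$.

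For the reverse inclusion $\mathcal{B}^0_\Theta \subseteq \mathcal{W}(A_z)$, I would fix $\phi \in H^\infty$ and approximate $A_\phi$ in two stages. First, for $0 < r < 1$ set $\phi_r(z) := \phi(rz)$; each $\phi_r$ is analytic on a neighborhood of $\overline{\D}$, hence is a uniform limit on $\overline{\D}$ of its Taylor polynomials $p_N$, and since $\|A_\psi\| \le \|\psi\|_\infty$ (because $A_\psi$ is a compression of the multiplication operator $M_\psi$) we get $p_N(A_z) = A_{p_N} \to A_{\phi_r}$ in operator norm, so $A_{\phi_r} \in \mathcal{W}(A_z)$. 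Second, I would let $r \to 1^-$: the family $\{A_{\phi_r}\}$ is uniformly bounded by $\|\phi\|_\infty$, and for $f$ in the dense subspace $H^\infty \cap \mathcal{K}_\Theta$ one has $\|(A_{\phi_r}-A_\phi)f\| \le \|(\phi_r-\phi)f\|_{L^2} \le \|f\|_\infty \|\phi_r - \phi\|_{H^2} \to 0$. By the uniform boundedness, convergence on a dense set upgrades to strong (hence weak) operator convergence, so $A_\phi \in \mathcal{W}(A_z)$. Combining the two inclusions yields $\mathcal{W}(S^0_\Theta) = \mathcal{B}^0_\Theta$.

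The only step that takes any genuine thought is this last approximation argument — controlling the passage from bounded analytic symbols to polynomial symbols in the weak topology via the dilations $\phi_r$ and the uniform bound $\|A_{\phi_r}\| \le \|\phi\|_\infty$; once Sarason's identification of $\{A_z\}'$ is in hand, the rest is formal.
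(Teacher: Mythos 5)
Your argument is correct, but it takes a genuinely different route from the paper's. The paper passes to adjoints via \eqref{Sed-conj-flip} and proves instead that $\W(A_{\overline{z}}) = \B_{\Theta}^{\infty}$: it invokes the Commutant Lifting Theorem to write $\B_{\Theta}^{\infty} = \{A_{\overline{z}}\}' = \{T_{\overline{z}}\}'|\K_{\Theta} = \W(T_{\overline{z}})|\K_{\Theta}$, and then uses the fact that $\K_{\Theta}$ is invariant under $T_{\overline{z}}$, so that a weakly convergent sequence $p_n(T_{\overline{z}}) \to T_g$ restricts to $p_n(A_{\overline{z}}) \to A_g$ weakly on $\K_{\Theta}$; the switch to the co-analytic side is forced because $\K_{\Theta}$ is not invariant under $T_z$, so the analogous restriction argument is unavailable for $A_z$ directly. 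You instead stay on the analytic side and build $A_{\phi}$ from polynomials in $A_z$ by hand: the multiplicativity $p(A_z) = A_{p}$ together with $\|A_{\psi}\| \leq \|\psi\|_{\infty}$ gives $A_{\phi_r} \in \W(A_z)$ as a norm limit of $A_{p_N}$, and then the uniform bound $\|A_{\phi_r}\| \leq \|\phi\|_{\infty}$ plus $\|\phi_r - \phi\|_{H^2} \to 0$ upgrades pointwise convergence on the dense set $H^{\infty} \cap \K_{\Theta}$ to strong, hence weak, convergence $A_{\phi_r} \to A_{\phi}$; Sarason's commutant theorem is used only for the easy inclusion $\W(A_z) \subseteq \{A_z\}' = \B_{\Theta}^0$ (the paper's easy inclusion likewise rests on $\B_{\Theta}^{\infty}$ being a commutant, hence weakly closed). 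What each approach buys: the paper's proof is shorter because it imports the known $H^2$-level identity $\W(T_{\overline{z}}) = \{T_{\overline{g}} : g \in H^{\infty}\}$ and the lifting theorem wholesale, while yours is more self-contained and quantitative, replacing both of those inputs by the elementary two-stage dilation/Taylor approximation, at the cost of a slightly longer argument.
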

 
	\begin{proof}
		Since $S_{\Theta}^0 = A_z$,
		it suffices to show, by \eqref{Sed-conj-flip}, that $\W(A_{\overline{z}}) = \B_{\Theta}^{\infty}$.
		Since the reverse inclusion $\supseteq$ is clear, we focus on establishing that 
		$\B_{\Theta}^{\infty} \subseteq \mathcal{W}(A_{\overline{z}})$.		
		For $g \in L^{\infty}$, we let $T_g$ denote the corresponding Toeplitz operator on 
		$H^2$ and recall that
		\begin{equation*}
			\mathcal{W}(T_{\overline{z}}) = \{T_g: \overline{g} \in H^{\infty}\} = \{T_{\overline{z}}\}'.
		\end{equation*}
		In light of the Commutant Lifting Theorem \cite{Sarason-NF}, it follows that
		\begin{equation*}
			\B_{\Theta}^{\infty} =\{A_{\overline{z}}\}' = \{T_{\overline{z}}\}'|\K_{\Theta} = \W(T_{\overline{z}})|\K_{\Theta}.
		\end{equation*}
		We now claim that $\W(T_{\overline{z}})|\K_{\Theta}$ is contained in $\W(A_{\overline{z}})$. 
		Indeed, if a sequence of polynomials $p_n(T_{\overline{z}})$ in $T_{\overline{z}}$ 
		converges weakly to $T_g$, then it follows that 
		$p_n(T_{\overline{z}})|\K_{\Theta} = p_n(A_{\overline{z}})$ converges weakly to $A_g$. 
		In particular, this demonstrates that
		 $\B_{\Theta}^{\infty} \subseteq \mathcal{W}(A_{\overline{z}})$ and concludes the proof.
	\end{proof}
	
	To complete the proof of Proposition \ref{BWS}, we require some additional notation.
	For $a \in \D$ we define
	\begin{align}
		b_{a}(z) &:= \frac{z - a}{1 - \overline{a} z}, \label{eq-Factor}\\[3pt]
		\Theta_a &:= b_{a} \circ \Theta. \nonumber
	\end{align}
	Now recall that for each $a \in \D$, the \emph{Crofoot transform}
	\begin{equation} \label{Cro-def}
		U_{a}: \mathcal{K}_{\Theta} \to \mathcal{K}_{\Theta_a}, \quad U_{a} f := \frac{\sqrt{1 - |a|^2}}{1 - \overline{a} \Theta} f
	\end{equation}
	is unitary \cite{Crofoot}  (see \cite[Sect.~13]{Sarason} for a thorough discussion of Crofoot transforms in the context
	of truncated Toeplitz operators).  Furthermore, it has the property that 
	\begin{equation} \label{Sarason-Crofoot}
		U_{a} S_{\Theta}^{a} U_{a}^{*} = S_{\Theta_a}^{0},
	\end{equation}
	where $S_{\Theta}^{a}$ is the generalization of the Clark operator defined in \eqref{eq-GeneralizedShift}.
	Using this observation, we see that 
	\begin{equation} \label{H/aH}
		\B_{\Theta}^a \cong\B_{\Theta_{a}}^{0} \quad \forall a \in \D.
	\end{equation}
In particular, the proof of Proposition \ref{BWS} for $a \in \D$ now follows from 
	Lemma \ref{aiszero}, \eqref{Sarason-Crofoot}, and \eqref{H/aH}. The proof in the case $a \in \widehat{\C} \setminus \D^{-}$ 
	is settled by appealing to \eqref{Sed-conj-flip}. 

	\begin{Remark} \label{normal-ops}
		When $a \in \dD$, the algebra $\B_{\Theta}^{a}$ is generated by a single unitary operator and is therefore an 
		algebra of normal operators.  The situation is quite different for $a \in \widehat{\C} \setminus \dD$.
		In \cite[Prop.~6.5]{TTOSIUES} it is shown that if $A$ belongs to $\B^{0}_{\Theta}$ and $A$ is normal, 
		then $A = c I$. Using \eqref{H/aH} one can see that the same is true for $\B^{a}_{\Theta}$ whenever $a \in \D$. 
		Although the same result still holds if $a \in \widehat{\C} \setminus \D^{-}$, to prove it one needs Proposition \ref{sharp-B} (see below) along with \eqref{H/aH}. 
	\end{Remark}

\section{Basic spatial isomorphisms}

 \subsection{The spatial isomorphisms $\Lambda_{a}$, $\Lambda_{\psi}$, and $\Lambda_{\#}$}\label{SubsectionBasic}
	It turns out that \emph{every} spatial isomorphism between Sedlock algebras can be written 
	as a product of certain fundamental spatial isomorphisms, which were used in 
	\cite[Thm.~3.3]{TTOSIUES} to determine when $\T_{\Theta_1} \cong \T_{\Theta_2}$
	holds for two inner functions $\Theta_1, \Theta_2$.  These spatial isomorphisms
	are explicitly defined in terms of unitary operators between $\K_{\Theta}$ spaces.

	The first basic building block is the Crofoot transform $U_{a}: \K_{\Theta} \to \K_{\Theta_{a}}$ 
	which we have already encountered in \eqref{Cro-def}.  Each Crofoot transform $U_a$ implements the following spatial isomorphism
	\cite[Prop.~4.2]{TTOSIUES}:
	\begin{equation} \label{La}
		\Lambda_{a}: \mathcal{T}_{\Theta} \to \mathcal{T}_{\Theta_a}, 
		\qquad \Lambda_a(A) := U_{a} A U_{a}^{*}.
	\end{equation}

	The second class of spatial isomorphisms arises from composition with a disk automorphism.
	To be more specific, for fixed disk automorphism $\psi$ we set
	\begin{equation*}
		U_{\psi}: \K_{\Theta} \to \K_{\Theta \circ \psi}, 
		\qquad U_{\psi} f := \sqrt{\psi'} (f \circ \psi).
	\end{equation*}
	A routine computation \cite[Prop.~4.1]{TTOSIUES} reveals that $U_{\psi}$ is unitary, 
	\begin{equation} \label{form-Jphi}
		U_{\psi} A_{\phi}^{\Theta} U_{\psi}^{*} = A^{\Theta \circ \psi}_{\phi \circ \psi},
	\end{equation}
	and
	\begin{equation*}
		U_{\psi} \T_{\Theta} U_{\psi}^{*} = \T_{\Theta \circ \psi}.
	\end{equation*}
	In particular, this implies that the map
	\begin{equation} \label{Lpsi}
		\Lambda_{\psi}: \T_{\Theta} \to \T_{\Theta \circ \psi}, \quad \Lambda_{\psi}(A) := U_{\psi} A U_{\psi}^{*}
	\end{equation}
	is a spatial isomorphism.  
	
	Our last class of spatial isomorphism arises from the unitary operator (discussed in \cite{TTOSIUES})
	\begin{equation*}
		U_{\#}: \K_{\Theta} \to \K_{\Theta^{\#}}, \quad [U_{\#} f](z) := \overline{C f(\overline{z})},
	\end{equation*}
	where $\Theta^{\#}(z) := \overline{\Theta(\overline{z})}$ and $C$ denotes the conjugation
	\eqref{eq-ModelConjugation} on $\K_{\Theta}$.  In terms of boundary functions on the unit circle
	$\dD$, this can be written as
	\begin{equation} \label{J-sharp-defn} 
		[U_{\#} f](z) = \overline{z} f(\overline{z}) \Theta^{\#}(z).
	\end{equation}
	Although the preceding does not appear to represent the boundary values of a function
	in $\K_{\Theta^{\#}}$, note that $f(\overline{z}) = \overline{f^{\#}(z)}$ whence
	$U_{\#} f$ is simply the conjugate, in the sense of \eqref{eq-ModelConjugation},
	of the function $f^{\#}$ in $\K_{\Theta^{\#}}$.
	A computation in \cite[Prop.~4.6]{TTOSIUES} now yields 
	\begin{equation} \label{form-Jsharp}
		U_{\#} A^{\Theta}_{\phi} U_{\#}^{*} = A^{\Theta^{\#}}_{\overline{\phi^{\#}}}
	\end{equation}
	and
	\begin{equation*}
		U_{\#} \T_{\Theta} U_{\#}^{*} = \T_{\Theta^{\#}},
	\end{equation*}
	giving us our final class of spatial isomorphisms
	\begin{equation} \label{Lsharp}
		\Lambda_{\#}: \T_{\Theta} \to \T_{\Theta^{\#}}, \qquad \Lambda_{\#}(A) := U_{\#} A U_{\#}^{*}.
	\end{equation}

\subsection{Images of Sedlock algebras}
	We now wish to discuss the images of the Sedlock algebras $\B_{\Theta}^a$ under the three basic
	spatial isomorphisms $\Lambda_{a}$, $\Lambda_{\psi}$, and $\Lambda_{\#}$ defined above.
	
	To this end, let us first note that the image of a maximal abelian algebra under a spatial isomorphism
	is also a maximal abelian algebra.  To be more specific, suppose that $\h_1$ and $\h_2$ are 
	Hilbert spaces, $\mathcal{A}_1$, $\mathcal{A}_{2}$  are linear subspaces of $\mathcal{B}(\h_1)$ and $\mathcal{B}(\h_2)$ respectively,  and that $\Lambda: \mathcal{A}_1 \to  \mathcal{A}_2$ is a spatial isomorphism, i.e., there is a unitary 
	$U: \mathcal{H}_1 \to \mathcal{H}_2$ such that 
	$\Lambda(A) = U A U^{*}$ for all $A \in \mathcal{A}_1$.
	If $\A$ is a maximal abelian algebra in $\mathcal{A}_1$, then its image $\Lambda(\A)$ is maximal abelian algebra in 
	$\mathcal{A}_2$.  In particular, any spatial isomorphism $\Lambda$ induces a bijection between the maximal 
	abelian algebras in $\mathcal{A}_1$ and those in $\mathcal{A}_2$.  In the setting of Sedlock algebras, we conclude that if 
	$\Lambda: \T_{\Theta_1} \to \T_{\Theta_2}$ is a spatial isomorphism, then 
	there is a bijection $g: \widehat{\C} \to \widehat{\C}$ such that 
	\begin{equation*}
		\Lambda(\B_{\Theta_1}^a) = \B_{\Theta_2}^{g(a)}.
	\end{equation*}
	The following three propositions explicitly describe the bijection $g$ for the basic
	classes of spatial isomorphisms which we introduced above.

	\begin{Proposition} \label{sharp-B}
		For any inner function $\Theta$ and $a \in \widehat{\C}$,
		\begin{equation} \label{sharp}
			\Lambda_{\#}( \mathcal{B}_{\Theta}^a)  = \mathcal{B}_{\Theta^{\#}}^{1/a}.
		\end{equation}
	\end{Proposition}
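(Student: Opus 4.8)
The plan is to reduce everything to the single operator identity
\begin{equation*}
U_\# S_\Theta^a U_\#^{*} = \big(S_{\Theta^\#}^{\overline a}\big)^{*}, \qquad a \in \D^{-},
\end{equation*}
and then let Sedlock's commutant description (Theorem~\ref{Sed-commutant}) together with the symmetry \eqref{Sed-conj-flip} do the rest. Throughout, write $C$ for the conjugation \eqref{eq-ModelConjugation} on $\K_\Theta$ and $C_\#$ for the corresponding conjugation on $\K_{\Theta^\#}$, and recall (see the discussion following \eqref{J-sharp-defn}) that $U_\# f = C_\#(f^\#)$, where $f^\#(z) := \overline{f(\overline z)}$.

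First I would establish the displayed identity for $a \in \D^{-}$. Applying \eqref{form-Jsharp} with the symbol $\phi(z)=z$ (so that $\overline{\phi^\#} = \overline z$) gives $U_\# A_z^\Theta U_\#^{*} = A_{\overline z}^{\Theta^\#} = (A_z^{\Theta^\#})^{*}$. For the rank-one term in \eqref{eq-GeneralizedShift} I would use that conjugation by $U_\#$ sends $x\otimes y$ to $(U_\# x)\otimes(U_\# y)$, so everything comes down to computing $U_\# k_0^\Theta$ and $U_\# C k_0^\Theta$. Two elementary observations suffice here: from \eqref{eq-ReproducingKernel} together with $\overline{\Theta^\#(0)} = \Theta(0)$ one reads off $(k_0^\Theta)^\# = k_0^{\Theta^\#}$, and from the boundary formula $Cf = \overline{fz}\Theta$ one checks $(Cf)^\# = C_\#(f^\#)$. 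Feeding these into $U_\# f = C_\#(f^\#)$ and using that $C_\#$ is an involution yields $U_\# k_0^\Theta = C_\# k_0^{\Theta^\#}$ and $U_\# C k_0^\Theta = k_0^{\Theta^\#}$. Substituting back into \eqref{eq-GeneralizedShift}, taking adjoints (with $(x\otimes y)^{*} = y\otimes x$), and verifying that the scalar coefficient transforms correctly through $\overline{\Theta^\#(0)} = \Theta(0)$, one obtains precisely $U_\# S_\Theta^a U_\#^{*} = (S_{\Theta^\#}^{\overline a})^{*}$.

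Since $U_\#$ is unitary, conjugation by it carries commutants to commutants, so Theorem~\ref{Sed-commutant}(i) gives, for $a \in \D^{-}$,
\begin{equation*}
\Lambda_\#(\B_\Theta^a) = U_\# \{S_\Theta^a\}' U_\#^{*} = \big\{(S_{\Theta^\#}^{\overline a})^{*}\big\}'.
\end{equation*}
Now I split into two regimes. If $a \in \D$, then $1/a \in \widehat{\C}\setminus\D^{-}$ with $1/\overline{1/a} = \overline a$, so Theorem~\ref{Sed-commutant}(ii) identifies $\{(S_{\Theta^\#}^{\overline a})^{*}\}'$ with $\B_{\Theta^\#}^{1/a}$. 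If $a \in \dD$, then $\overline a = 1/a \in \dD$ and $S_{\Theta^\#}^{\overline a}$ is a Clark unitary, whence $\{(S_{\Theta^\#}^{\overline a})^{*}\}' = \{S_{\Theta^\#}^{\overline a}\}' = \B_{\Theta^\#}^{1/a}$ by Theorem~\ref{Sed-commutant}(i). In either case \eqref{sharp} holds for $a\in\D^{-}$.

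For $a \in \widehat{\C}\setminus\D^{-}$ I would bootstrap from the case just proved. Here $1/\overline a \in \D$, and by \eqref{Sed-conj-flip} the algebra $\B_\Theta^a$ is exactly the set of adjoints of operators in $\B_\Theta^{1/\overline a}$; since $\Lambda_\#$ intertwines the adjoint operation, the previous paragraph yields
\begin{equation*}
\Lambda_\#(\B_\Theta^a) = \big(U_\# \B_\Theta^{1/\overline a} U_\#^{*}\big)^{*} = \big(\B_{\Theta^\#}^{\overline a}\big)^{*},
\end{equation*}
and a final application of \eqref{Sed-conj-flip} (now for $\Theta^\#$) rewrites the right-hand side as $\B_{\Theta^\#}^{1/a}$. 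The main obstacle is the computation in the second paragraph: keeping the two conjugations $C$, $C_\#$ and the reversal $f\mapsto f^\#$ straight on the reproducing kernels, and making sure the rank-one coefficient of \eqref{eq-GeneralizedShift} lands on exactly the coefficient appearing in $S_{\Theta^\#}^{\overline a}$ rather than a M\"obius-distorted variant; after that, the only care needed is matching the parameter $\overline a$ produced by the computation with the target $1/a$ across the two cases of Theorem~\ref{Sed-commutant}.
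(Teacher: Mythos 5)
Your argument is correct, and it follows a genuinely different route from the paper. The paper proves \eqref{sharp} by a direct symbol computation: starting from the description \eqref{Sed-right-form} of $\B_\Theta^a$ in Lemma \ref{alternate-description}, it applies \eqref{form-Jsharp} to the symbol $\phi(1+a\overline\Theta)+c$ and rewrites the result as $\tfrac{1}{a}\phi(\overline z)\overline{\Theta(\overline z)}\bigl(1+\tfrac1a\overline{\Theta^\#}\bigr)+c$, which is again of the form \eqref{Sed-right-form} for the parameter $1/a$. You instead work at the level of generators: the identity $U_\# S_\Theta^a U_\#^{*}=(S_{\Theta^\#}^{\overline a})^{*}$ (which I checked; your computations $(k_0^\Theta)^\#=k_0^{\Theta^\#}$, $(Cf)^\#=C_\#(f^\#)$, hence $U_\# k_0^\Theta=C_\# k_0^{\Theta^\#}$ and $U_\# Ck_0^\Theta=k_0^{\Theta^\#}$, are right, and the rank-one coefficient does conjugate correctly through $\overline{\Theta^\#(0)}=\Theta(0)$) combined with Theorem \ref{Sed-commutant} and the adjoint symmetry \eqref{Sed-conj-flip} then gives \eqref{sharp}, with no circularity since Sedlock's commutant theorem precedes the proposition. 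Each approach has its merits: the paper's symbol manipulation is self-contained and uniform in $a$ (though it tacitly divides by $a$, so the endpoints $a=0,\infty$ really rest on $(\B^0_\Theta)^{*}=\B^\infty_\Theta$), while your proof avoids symbol bookkeeping entirely, handles $a\in\D$, $a\in\dD$, and $a\in\widehat\C\setminus\D^-$ by clean case splits, and produces as a by-product the explicit intertwining of the generalized Clark operators under $U_\#$, an identity analogous to \eqref{eq-Reinterpret} that is useful in its own right.
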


	\begin{proof}
		From \eqref{form-Jsharp}, the sharp operator $U_{\#}$ satisfies
		$U_{\#} A_{\phi}^{\Theta} U_{\#}^{*} = A^{\Theta^{\#}}_{\overline{\phi^{\#}}}, \phi \in L^2.$
		Thus for $\phi \in \mathcal{K}_{\Theta}$ with $\phi(0) = 0$ we have 
		\begin{align*}
		\Lambda_{\#}\left(A_{\phi (1 + a \overline{\Theta}) + c}\right) & = A_{\overline{(\phi (1 + a \overline{\Theta})+ c)^{\#}}}\\
		& = A_{\phi (\overline{z}) (1 + a \overline{\Theta(\overline{z})}) + c}\\
		& = A_{\phi (\overline{z}) \overline{\Theta(\overline{z})} (\Theta(\overline{z}) + a) + c}\\
		& = A_{\frac{1}{a} \phi (\overline{z})\overline{ \Theta(\overline{z}}) (1 + \frac{1}{a}\overline{\Theta^{\#}}) + c}.
		\end{align*}
		Note that since $\phi(0) = 0$, then $\phi(\overline{z}) \Theta^{\#} \in \mathcal{K}_{\Theta^{\#}}$.
		The result now follows from \eqref{Sed-right-form}.
	\end{proof}

	\begin{Proposition} \label{B-psi}
		For any inner function $\Theta$, disk automorphism $\psi$, and $a \in \widehat{\C}$ we have
		\begin{equation*}
			\Lambda_{\psi} (\mathcal{B}_{\Theta}^{a})  = \mathcal{B}_{\Theta \circ \psi}^{a}.
		\end{equation*}
	\end{Proposition}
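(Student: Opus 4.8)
The plan is to argue exactly as in the proof of Proposition \ref{sharp-B}, but now using the transformation rule \eqref{form-Jphi} for $U_\psi$ in place of \eqref{form-Jsharp}. First I would recall from \eqref{form-Jphi} that $U_\psi A^\Theta_\phi U_\psi^* = A^{\Theta\circ\psi}_{\phi\circ\psi}$ for every symbol $\phi$, and then take a typical generator of $\mathcal{B}_\Theta^a$ in the form given by Lemma \ref{alternate-description}, namely $A_\psi$ with $\psi = \phi_0(1 + a\overline{\Theta}) + c$ where $\phi_0 \in \mathcal{K}_\Theta$, $\phi_0(0) = 0$, and $c \in \C$. (There is an unfortunate notational clash between the symbol $\psi$ in \eqref{Sed-right-form} and the disk automorphism $\psi$; in writing this up I would rename the symbol, say to $\eta = \phi_0(1 + a\overline{\Theta}) + c$, to avoid confusion.) Applying $\Lambda_\psi$ gives
\begin{equation*}
	\Lambda_\psi\!\left(A_{\phi_0(1 + a\overline{\Theta}) + c}\right) = A^{\Theta\circ\psi}_{(\phi_0\circ\psi)(1 + a\,\overline{\Theta\circ\psi}) + c},
\end{equation*}
where I have used that composition with $\psi$ respects products, constants, and complex conjugation of boundary functions (since $\psi$ maps $\dD$ to $\dD$, $\overline{\Theta}\circ\psi = \overline{\Theta\circ\psi}$ on the circle).

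The remaining point is to verify that the new symbol again has the shape demanded by \eqref{Sed-right-form} with the \emph{same} parameter $a$: I need $\phi_0\circ\psi \in \mathcal{K}_{\Theta\circ\psi}$ and $(\phi_0\circ\psi)(0) = 0$. The membership $\phi_0\circ\psi \in \mathcal{K}_{\Theta\circ\psi}$ is precisely the content of the fact that $U_\psi$ maps $\mathcal{K}_\Theta$ onto $\mathcal{K}_{\Theta\circ\psi}$ — more carefully, $U_\psi f = \sqrt{\psi'}(f\circ\psi)$ lies in $\mathcal{K}_{\Theta\circ\psi}$ for $f \in \mathcal{K}_\Theta$, and since $\sqrt{\psi'}$ is a bounded outer function (indeed nonvanishing on $\D$), one checks $f\circ\psi \in H^2$ and $f\circ\psi \perp (\Theta\circ\psi)H^2$ directly, or simply invokes the established mapping property of $U_\psi$. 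The vanishing condition $(\phi_0\circ\psi)(0) = 0$ requires a small adjustment: $\psi(0)$ need not be $0$. However, one is free to absorb a constant: write $\phi_0\circ\psi = \big(\phi_0\circ\psi - (\phi_0\circ\psi)(0)\big) + (\phi_0\circ\psi)(0)$, and note that $(\phi_0\circ\psi)(0)\cdot(1 + a\overline{\Theta\circ\psi})$ contributes $(\phi_0\circ\psi)(0) + a(\phi_0\circ\psi)(0)\overline{\Theta\circ\psi}$; the first piece is a constant (absorbable into $c$) and the second piece is $a$ times the conjugate of a constant multiple of $\Theta\circ\psi$, which — using $\overline{\Theta\circ\psi} = (\Theta\circ\psi)^{-1}$ on the circle and the reproducing-kernel bookkeeping already carried out in Lemma \ref{alternate-description} — can be rewritten to preserve the required form. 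Alternatively, and more cleanly, I would simply observe that $\mathcal{B}_\Theta^a = \{A_{\phi(1 + a\overline{\Theta}) + c} : \phi \in \mathcal{K}_\Theta,\ c \in \C\}$ with no normalization on $\phi(0)$ (the $\phi_0(0) = 0$ normalization in \eqref{Sed-right-form} is just a choice of representative, since $\phi(0)(1 + a\overline{\Theta})$ differs from $\phi(0) + \text{const}$ by a multiple of $a\overline{\Theta}$, which... ) — but to be safe I would keep the normalized form and handle the constant adjustment explicitly.

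The main obstacle, then, is bookkeeping rather than substance: confirming that the constant $(\phi_0\circ\psi)(0)$ arising from $\psi(0) \neq 0$ can be re-absorbed so that the image symbol is genuinely of the form $\eta_0(1 + a\overline{\Theta\circ\psi}) + c'$ with $\eta_0 \in \mathcal{K}_{\Theta\circ\psi}$ vanishing at the origin and $c' \in \C$, \emph{with the parameter $a$ unchanged}. Once this is checked, one direction of the set equality $\Lambda_\psi(\mathcal{B}_\Theta^a) \subseteq \mathcal{B}_{\Theta\circ\psi}^a$ follows, and the reverse inclusion is obtained by applying the same argument to the inverse automorphism $\psi^{-1}$ (noting $U_\psi^{-1} = U_{\psi^{-1}}$ up to the obvious identifications, and $(\Theta\circ\psi)\circ\psi^{-1} = \Theta$), or simply by a dimension/maximality count since both sides are maximal abelian algebras in $\mathcal{T}_{\Theta\circ\psi}$ and $\Lambda_\psi$ is a bijection of $\mathcal{T}_\Theta$ onto $\mathcal{T}_{\Theta\circ\psi}$. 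Finally, the endpoint cases $a = 0$ and $a = \infty$ should be noted separately: for $a = 0$ the identity $\Lambda_\psi(\mathcal{B}_\Theta^0) = \mathcal{B}_{\Theta\circ\psi}^0$ is immediate from \eqref{form-Jphi} applied to analytic symbols, and the case $a = \infty$ follows either by the analogous computation with co-analytic symbols or by combining the $a = 0$ case with \eqref{Sed-conj-flip} and the evident commutation of $\Lambda_\psi$ with the adjoint operation.
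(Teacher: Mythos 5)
Your overall strategy---apply \eqref{form-Jphi} to a symbol written in the form \eqref{Sed-right-form} and check that the image symbol has the same form relative to $\Theta\circ\psi$---is also the paper's strategy, but the step you dismiss as automatic is precisely the one that requires work, and the justification you offer for it is incorrect. You assert that $\phi_0\circ\psi\in\K_{\Theta\circ\psi}$ ``is precisely the content of'' the mapping property of $U_{\psi}$, or can be checked directly via $\phi_0\circ\psi\perp(\Theta\circ\psi)H^2$. Neither holds: $U_{\psi}f=\sqrt{\psi'}\,(f\circ\psi)$, and the weight $\sqrt{\psi'}$ cannot be discarded, because model spaces are not invariant under multiplication (or division) by a bounded outer function. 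Concretely, take $\Theta(z)=z^2$ and $\phi_0(z)=z$ (so $\phi_0\in\K_{\Theta}$ and $\phi_0(0)=0$), and let $\psi$ be any disk automorphism with $p:=\psi(0)\neq0$; then $\phi_0\circ\psi=\psi$ and $\langle\psi,(\Theta\circ\psi)\cdot1\rangle=\langle1,\psi\rangle=\overline{p}\neq0$, so $\phi_0\circ\psi\notin\K_{\Theta\circ\psi}$. Hence the image symbol $(\phi_0\circ\psi)(1+a\,\overline{\Theta\circ\psi})+c$ is not literally of the shape demanded by \eqref{Sed-right-form}, and your argument stops exactly where the real proof must begin.

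What the paper does at this point is correct the symbol modulo $(\Theta\circ\psi)H^2+\overline{(\Theta\circ\psi)H^2}$, which by Sarason's criterion \eqref{Sarason-zero} leaves the truncated Toeplitz operator unchanged: using the projection formula \eqref{Ptcp} and the observation \eqref{comp-C} that $(\phi\circ\psi)\,\overline{\Theta\circ\psi}\in\overline{H^2}$, it computes $P_{\Theta\circ\psi}(\phi\circ\psi)$ and $P_{\Theta\circ\psi}\bigl(\overline{\phi\circ\psi}\,(\Theta\circ\psi)\bigr)$, handles the co-analytic piece by an adjoint trick, and thereby exhibits an $F\in\K_{\Theta\circ\psi}$ with $F(0)=0$ and a constant $d$ such that $A^{\Theta\circ\psi}_{(\phi\circ\psi)(1+a\overline{\Theta\circ\psi})+c}=A^{\Theta\circ\psi}_{F(1+a\overline{\Theta\circ\psi})+d}$, with the parameter $a$ untouched. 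By contrast, what you call the ``main obstacle''---the constant $(\phi_0\circ\psi)(0)$ arising from $\psi(0)\neq0$---is the easy part: $A_{\overline{\Theta\circ\psi}}=0$ by \eqref{Sarason-zero}, so the stray term $a(\phi_0\circ\psi)(0)\overline{\Theta\circ\psi}$ contributes nothing and constants are absorbed into $c'$ directly. (Your side remark that the normalization $\phi_0(0)=0$ in \eqref{Sed-right-form} can be dropped is correct, but the decomposition to use is $\phi=\phi_0+ck_0$ rather than $\phi=(\phi-\phi(0))+\phi(0)$, since constants need not belong to $\K_{\Theta}$.) Your closing points---the reverse inclusion via $\psi^{-1}$ or via maximality, and the endpoint cases $a=0,\infty$---are fine, but as written the proposal has a genuine gap at its central step: you must show how to replace $\phi_0\circ\psi$ by a bona fide element of $\K_{\Theta\circ\psi}$ without changing the operator or the parameter $a$.
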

	
	\begin{proof}
		Suppose that $A \in \mathcal{B}_{\Theta}^{a}$. By \eqref{Sed-right-form} 
		$$A = A_{\phi (1 + a \overline{\Theta}) + c}, \quad \phi \in K_{\Theta}, \phi(0) = 0, c \in \C.$$
		By \eqref{form-Jphi}, 
		$$\Lambda_{\psi}(A) = A^{\Theta \circ \psi}_{\phi \circ \psi (1 + a \overline{\Theta \circ \psi}) + c}.$$
		To show this operator belongs to $\mathcal{B}_{\Theta \circ \psi}^{a}$, we will use \eqref{Sed-right-form} and prove that there exists an $F \in \mathcal{K}_{\Theta \circ \psi}, F(0) = 0$, and a $d \in \C$ so that 
		\begin{equation} \label{Warren-trick}
		A^{\Theta \circ \psi}_{\phi \circ \psi (1 + a \overline{\Theta \circ \psi}) + c} = 
		A^{\Theta \circ \psi}_{F (1 + a \overline{\Theta \circ \psi}) + d}.
		\end{equation}
		
		To do this, let us first observe that if $P_{\Theta \circ \psi}$ is the orthogonal projection of $L^2$ onto $\mathcal{K}_{\Theta \circ \psi}$ and $P_{+}$ is the usual orthogonal projection of $L^2$ onto $H^2$, then 
		\begin{equation}\label{Ptcp}
		P_{\Theta \circ \psi} f = f - \Theta \circ \psi P_{+}(\overline{\Theta \circ \psi} f).
		\end{equation}
		Next we observe that by the conjugation $C$ from \eqref{eq-ModelConjugation} we know that $\overline{z \phi} \Theta \in \mathcal{K}_{\Theta} \subset H^2$. This means that $\phi \overline{\Theta} \in \overline{H^2}$ and so 
\begin{equation} \label{comp-C}
(\phi \circ \psi) \overline{\Theta \circ \psi} \in \overline{H^2}.
\end{equation}
Let us compute $P_{\Theta \circ \psi}(\phi \circ \psi)$:
\begin{align*}
P_{\Theta \circ \psi}(\phi \circ \psi) & = \phi \circ \psi - (\Theta \circ \psi)P_{+}(\phi \circ \psi \overline{\Theta \circ \psi}) \quad \mbox{(by \eqref{Ptcp})}\\
& =  \phi \circ \psi - (\Theta \circ \psi) (\phi \circ \psi)(0) \overline{(\Theta \circ \psi)(0)} \quad \mbox{(by \eqref{comp-C})}\\
& = (\phi \circ \psi  - (\phi \circ \psi)(0)) + (\phi \circ \psi)(0) (1 - (\Theta \circ \psi)\overline{(\Theta \circ \psi)(0)})\\
& = (\phi \circ \psi  - (\phi \circ \psi)(0)) + (\phi \circ \psi)(0) k_{0}^{\Theta \circ \psi}.
\end{align*}
Let 
$$F = \phi \circ \psi  - (\phi \circ \psi)(0)$$ and notice from the above calculation that 
\begin{equation} \label{AAA}
F \in \mathcal{K}_{\Theta \circ \psi}, F(0) = 0
\end{equation}
and 
\begin{equation} \label{BBBB}
P_{\Theta \circ \psi}(\phi \circ \psi) = F + (\phi \circ \psi)(0) k_{0}^{\Theta \circ \psi}.
\end{equation}
A similar computation will show that 
\begin{equation} \label{CCC}
P_{\Theta \circ \psi}((\overline{\phi \circ \psi})( \Theta \circ \psi)) = (\Theta \circ \psi) \overline{F}.
\end{equation}

Since $\phi \circ \psi$ and $(\overline{\phi \circ \psi})(\Theta \circ \psi) \in H^2$ (see \eqref{comp-C}) we know, from basic properties of projections, that 
\begin{equation} \label{DDD}
\phi \circ \psi - P_{\Theta \circ \psi}(\phi \circ \psi) \in (\Theta \circ \psi)H^2
\end{equation} 
\begin{equation} \label{EEE}
 (\overline{\phi \circ \psi})(\Theta \circ \psi) - P_{\Theta \circ \psi}((\overline{\phi \circ \psi})(\Theta \circ \psi)) \in (\Theta \circ \psi) H^2.
\end{equation}

By \eqref{BBBB} and \eqref{DDD}, along with the identity $A_{k_0} = I$,
\begin{equation} \label{FFF}
A^{\Theta \circ \psi}_{\phi \circ \psi} = A^{\Theta \circ \psi}_{F + (\phi \circ \psi)(0) k_{0}^{\Theta \circ \psi}} = A^{\Theta \circ \psi}_{F + (\phi \circ \psi)(0)}.
\end{equation}
By \eqref{CCC} and \eqref{EEE}
$$A^{\Theta \circ \psi}_{\overline{\phi \circ \psi} (\Theta \circ \psi)} = A^{\Theta \circ \psi}_{(\Theta \circ \psi) \overline{F}}.$$
Now take adjoints on both sides of the above equation to get 
\begin{equation} \label{GGG}
A^{\Theta \circ \psi}_{\phi \circ \psi (\overline{\Theta \circ \psi})} = A^{\Theta \circ \psi}_{F (\overline{\Theta \circ \psi})}.
\end{equation}
Combine \eqref{FFF} and \eqref{GGG} to obtain 
$$A^{\Theta \circ \psi}_{\phi \circ \psi + a (\phi \circ \psi) \overline{\Theta \circ \psi}} = A^{\Theta \circ \psi}_{F + a F (\overline{\Theta \circ \psi}) + (\phi \circ \psi)(0)}.$$
By \eqref{AAA} we have verified \eqref{Warren-trick} and thus the proof is complete. 
	\end{proof}

	\begin{Proposition} \label{delayed}
		For any inner function $\Theta$, $c \in \D$, and $a \in \widehat{\C}$, we have
		\begin{equation*}
			\Lambda_{c}( \mathcal{B}_{\Theta}^{a})  = \mathcal{B}_{\Theta_{c}}^{\ell_{c}(a)},
		\end{equation*}
		where 
		\begin{equation}\label{eq-lca}
			\ell_{c}(a) :=
			\begin{cases}
				\dfrac{a - c}{1 - \overline{c} a} & \text{if $a \neq \dfrac{1}{\,\overline{c}\,}$},\\[10pt]
				\infty & \text{if $a = \dfrac{1}{\,\overline{c}\,}$}.
			\end{cases}
		\end{equation}
	\end{Proposition}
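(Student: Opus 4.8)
The plan is to reduce everything to the single operator identity
\begin{equation*}
U_{c}\,S_{\Theta}^{a}\,U_{c}^{*} = S_{\Theta_{c}}^{\ell_{c}(a)}, \qquad a \in \D^{-},
\end{equation*}
where $U_{c}$ is the Crofoot transform of \eqref{Cro-def}; note that on $\D^{-}$ the map $\ell_{c}$ of \eqref{eq-lca} is exactly the disk automorphism $b_{c}$ of \eqref{eq-Factor}, so that $\ell_{c}(a) \in \D^{-}$ and the right-hand side is legitimate. Once this identity is in hand, the case $a \in \D^{-}$ of Proposition \ref{delayed} is immediate: $\Lambda_{c}$ is conjugation by the unitary $U_{c} \colon \K_{\Theta} \to \K_{\Theta_{c}}$, hence carries commutants in $\B(\K_{\Theta})$ to commutants in $\B(\K_{\Theta_{c}})$, and so by Theorem \ref{Sed-commutant}(i),
\begin{equation*}
\Lambda_{c}(\B_{\Theta}^{a}) = U_{c}\{S_{\Theta}^{a}\}'\,U_{c}^{*} = \{U_{c}S_{\Theta}^{a}U_{c}^{*}\}' = \{S_{\Theta_{c}}^{\ell_{c}(a)}\}' = \B_{\Theta_{c}}^{\ell_{c}(a)} .
\end{equation*}

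To prove the intertwining identity I would work directly from \eqref{eq-GeneralizedShift}. Conjugating the rank-one formula for $S_{\Theta}^{a}$ by $U_{c}$ and using \eqref{Sarason-Crofoot} (the case $a = c$, which says $U_{c}S_{\Theta}^{c}U_{c}^{*} = S_{\Theta_{c}}^{0} = A_{z}$ on $\K_{\Theta_{c}}$) to rewrite $U_{c}A_{z}U_{c}^{*}$, one finds that $U_{c}S_{\Theta}^{a}U_{c}^{*}$ is $A_{z}$ on $\K_{\Theta_{c}}$ plus the single rank-one term $\bigl(\tfrac{a}{1 - \overline{\Theta(0)}a} - \tfrac{c}{1 - \overline{\Theta(0)}c}\bigr)(U_{c}k_{0}^{\Theta}) \otimes (U_{c}Ck_{0}^{\Theta})$. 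A short computation from \eqref{Cro-def}, \eqref{eq-ReproducingKernel}, and \eqref{Conj-RK} gives
\begin{equation*}
U_{c}k_{0}^{\Theta} = \frac{1 - c\,\overline{\Theta(0)}}{\sqrt{1 - |c|^{2}}}\,k_{0}^{\Theta_{c}}, \qquad U_{c}Ck_{0}^{\Theta} = \frac{1 - \overline{c}\,\Theta(0)}{\sqrt{1 - |c|^{2}}}\,Ck_{0}^{\Theta_{c}}
\end{equation*}
(with $C$ denoting the conjugation on $\K_{\Theta}$, resp.\ $\K_{\Theta_{c}}$, in the appropriate places), so the rank-one term becomes $\bigl(\tfrac{a}{1-\overline{\Theta(0)}a} - \tfrac{c}{1-\overline{\Theta(0)}c}\bigr)\tfrac{(1-c\overline{\Theta(0)})^{2}}{1-|c|^{2}}\,k_{0}^{\Theta_{c}}\otimes Ck_{0}^{\Theta_{c}}$. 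Comparing with $S_{\Theta_{c}}^{\ell_{c}(a)} = A_{z} + \tfrac{\ell_{c}(a)}{1-\overline{\Theta_{c}(0)}\ell_{c}(a)}\,k_{0}^{\Theta_{c}}\otimes Ck_{0}^{\Theta_{c}}$ (legitimate since $\Theta_{c}$ is again a non-constant inner function, so $|\Theta_{c}(0)|<1$), the whole identity collapses to the scalar equality
\begin{equation*}
\frac{\ell_{c}(a)}{1 - \overline{\Theta_{c}(0)}\,\ell_{c}(a)} = \left(\frac{a}{1 - \overline{\Theta(0)}a} - \frac{c}{1 - \overline{\Theta(0)}c}\right)\frac{(1 - c\,\overline{\Theta(0)})^{2}}{1 - |c|^{2}} ,
\end{equation*}
which one verifies by substituting $\ell_{c}(a) = b_{c}(a)$ and $\Theta_{c}(0) = b_{c}(\Theta(0))$: both sides simplify to $\dfrac{(a - c)(1 - c\,\overline{\Theta(0)})}{(1 - |c|^{2})(1 - \overline{\Theta(0)}a)}$. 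I expect this last round of simplification, together with the two kernel computations, to be the only genuine labor in the argument; everything else is formal, and the computation is valid uniformly for all $a \in \D^{-}$.

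It remains to treat $a \in \widehat{\C} \setminus \D^{-}$. Put $b := 1/\overline{a} \in \D$ (with $1/\overline{\infty} := 0$). By \eqref{Sed-conj-flip}, $\B_{\Theta}^{a} = (\B_{\Theta}^{b})^{*}$; since $U_{c}A^{*}U_{c}^{*} = (U_{c}AU_{c}^{*})^{*}$, the map $\Lambda_{c}$ commutes with taking adjoints, so the case already proved gives
\begin{equation*}
\Lambda_{c}(\B_{\Theta}^{a}) = \bigl(\Lambda_{c}(\B_{\Theta}^{b})\bigr)^{*} = \bigl(\B_{\Theta_{c}}^{\ell_{c}(b)}\bigr)^{*} = \B_{\Theta_{c}}^{1/\overline{\ell_{c}(b)}},
\end{equation*}
the last step by \eqref{Sed-conj-flip} once more. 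Thus one only needs the elementary Möbius identity $1/\overline{\ell_{c}(1/\overline{a})} = \ell_{c}(a)$ (equivalently, that $b_{c}$ commutes with the circle reflection $z \mapsto 1/\overline{z}$), which is a one-line computation when $a \notin \{c, 1/\overline{c}\}$ and is checked directly from \eqref{eq-lca} in the degenerate instances $a = \infty$ (giving $\ell_{c}(\infty) = -1/\overline{c}$) and $a = 1/\overline{c}$ (giving $\ell_{c}(1/\overline{c}) = \infty$). This completes the proof. I would also note in passing that for $a \in \D$ the intertwining identity can be obtained without the scalar bookkeeping, by checking from \eqref{Cro-def} that $U_{\ell_{c}(a)}^{\Theta_{c}}U_{c}^{\Theta} = \zeta\,U_{a}^{\Theta}$ for a unimodular constant $\zeta$ (because $b_{\ell_{c}(a)} \circ b_{c}$ sends $a$ to $0$, hence equals $\zeta b_{a}$, and model spaces are insensitive to unimodular scaling of the inner function) and then applying \eqref{Sarason-Crofoot} to $\Theta_{c}$; the direct rank-one computation is preferable only because it handles the boundary values $a \in \dD$ simultaneously.
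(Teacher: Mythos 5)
Your proposal is correct and follows essentially the same route as the paper: the same rank-one conjugation computation (via the Crofoot images of $k_0^{\Theta}$ and $C k_0^{\Theta}$ and the same scalar identity) establishing $\Lambda_c(S_\Theta^a)=S_{\Theta_c}^{\ell_c(a)}$ for $a\in\D^-$, followed by the same use of \eqref{Sed-conj-flip} for $a\in\widehat{\C}\setminus\D^-$. The only cosmetic difference is that you pass from the operator identity to the algebras via Theorem \ref{Sed-commutant}(i) (commutants) where the paper invokes Proposition \ref{BWS} (weakly closed algebras generated by $S_\Theta^a$); both are legitimate.
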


	\begin{proof}
		Let us first  show that
		\begin{equation}\label{eq-Reinterpret}
			\Lambda_{c}(S^{a}_{\Theta}) = S_{\Theta_c}^{\ell_{c}(a)}, \quad a \in \D^{-}, c \in \D.
		\end{equation}
		To this end, we appeal to \cite[Lemma 13.2]{Sarason} to obtain the identities 
		\begin{equation*}
			U_{c} k_{0}^{\Theta} = \frac{1 - c \overline{\Theta(0)}}{\sqrt{1 - |c|^2}} k_{0}^{\Theta_c}, 
			\qquad U_{c} (C_{\Theta} k_{0}^{\Theta}) = \frac{1 - \overline{c} \Theta(0)}{\sqrt{1 - |c|^2}} C_{\Theta_c} k_{0}^{\Theta_c},
		\end{equation*}
		where $k_0^{\Theta}$ and $C_{\Theta} k_0^{\Theta}$ are defined by
		\eqref{eq-ReproducingKernel} and \eqref{Conj-RK}, respectively \footnote{Note that we need a subscript $\Theta$ on $C$ in order to distinguish the conjugation on $\mathcal{K}_{\Theta}$ from the conjugation on $\mathcal{K}_{\Theta_c}$.}. Therefore
		\begin{align*}
			\Lambda_{c}(k_{0}^{\Theta} \otimes C_{\Theta} k_{0}^{\Theta}) 
			& =   \left(\frac{1 - c \overline{\Theta(0)}}{\sqrt{1 - |c|^2}} k_{0}^{\Theta_c}\right)
			 \otimes \left(\frac{1 - \overline{c} \Theta(0)}{\sqrt{1 - |c|^2}} C_{\Theta_c} k_{0}^{\Theta_c}\right)\\
			& =  \frac{(1 - \overline{c} \Theta(0))^2}{1 - |c|^2} k_{0}^{\Theta_c} \otimes C_{\Theta_c}  k_{0}^{\Theta_c}.
		\end{align*}
	    Recall that \cite[Lemma 13.3]{Sarason} asserts 
		that $\Lambda_{c}(S_{\Theta}^c) = S_{\Theta_c}^{0}$. In light of the fact that
		\begin{align*}
			S_{\Theta}^{a} 
			& =  S_{\Theta}^{c} + \left( \frac{a}{1 - a \overline{\Theta(0)}} - \frac{c}{1 - c \overline{\Theta(0)}}\right) 
				k_{0}^{\Theta} \otimes C_{\Theta} k_{0}^{\Theta}\\
			& =  S_{\Theta}^{c} + \frac{a - c}{(1 - a \overline{\Theta(0)})(1 - c \overline{\Theta(0)})}k_{0}^{\Theta} \otimes C_{\Theta} k_{0}^{\Theta},
		\end{align*}
		we conclude that
		\begin{align*}
			\Lambda_{c}(S_{\Theta}^{a}) 
			& =  \Lambda_{c}\left(S_{\Theta}^{c} + \frac{a - c}{(1 - a \overline{\Theta(0)})(1 - c \overline{\Theta(0)})}k_{0}^{\Theta} 
				\otimes C_{\Theta} k_{0}^{\Theta}\right)\\
			& =  S_{\Theta_c}^{0} +  \frac{a - c}{(1 - a \overline{\Theta(0)})(1 - c \overline{\Theta(0)})} 
				 \frac{(1 - \overline{c} \Theta(0))^2}{1 - |c|^2} k_{0}^{\Theta_c} \otimes C_{\Theta_c} k_{0}^{\Theta_c}\\
			&=  S_{\Theta_c}^{0} + \frac{(a - c)(1 - c \overline{\Theta(0)})}{(1 - |c|^2)(1 - a \overline{\Theta(0)})} 
				k_{0}^{\Theta_c} \otimes C_{\Theta_c} k_{0}^{\Theta_c}.
		\end{align*}
		Recalling the definition \eqref{eq-GeneralizedShift}, we see that it suffices to demonstrate that
		\begin{equation*}
			 \frac{(a - c)(1 - c \overline{\Theta(0)})}{(1 - |c|^2)(1 - a \overline{\Theta(0)})} 
			 = \frac{\ell_{c}(a)}{1 - \ell_{c}(a) \overline{\Theta_c(0)}}.
		\end{equation*}
		However, the right-hand side of the preceding can be written as
		\begin{equation*}
			\frac{(a - c)(1 - c \overline{\Theta(0)})}{(1 - \overline{c} a)(1 - c \overline{\Theta(0)}) - (a - c) (\overline{\Theta(0)} - \overline{c})}
			= \frac{(a - c)(1 - c \overline{\Theta(0)})}{(1 - |c|^2)(1 - a \overline{\Theta(0)})}.
		\end{equation*}
		This proves \eqref{eq-Reinterpret}. Using Proposition \ref{BWS}, this also proves the proposition in the case $a \in \D^{-}$.

		Suppose that $a \in \widehat{\C} \setminus \D^{-}$ and recall from \eqref{Sed-conj-flip} that $\B^{a}_{\Theta} = (\B_{\Theta}^{1/\overline{a}})^{*}$.
		By \eqref{eq-Reinterpret}, it follows that 
		\begin{equation*}
			\Lambda_{c}(\mathcal{B}_{\Theta}^{1/\overline{a}}) = \mathcal{B}_{\Theta_c}^{\ell_{c}(1/\overline{a})},
		\end{equation*}
		whence, by the definition of $\ell_c(a)$ from \eqref{eq-lca}, we conclude that
		\begin{equation*}
			\Lambda_{c}(\mathcal{B}_{\Theta}^{a}) = \mathcal{B}_{\Theta_c}^{1/\overline{\ell_c(\frac{1}{\overline{a}})}}
			= \B_{\Theta_c}^{\ell_c(a)}.\qedhere
		\end{equation*}
	\end{proof}

\subsection{Words of unitary operators}
	Composing any of the basic spatial isomorphisms $\Lambda_{a}$, $\Lambda_{\psi}$, and $\Lambda_{\#}$
	introduced in Subsection \ref{SubsectionBasic} naturally leads one to consider words in the corresponding
	unitary operators $U_a$, $U_{\psi}$, and $U_{\#}$ and their adjoints.  The following proposition
	lists many of the basic words that arise in our work.

	\begin{Proposition} \label{J-rel}
		If $\Theta$ is an inner function, then
		\begin{multicols}{2}
			\begin{enumerate}\addtolength{\itemsep}{0.5\baselineskip}
				\item ${U_{b} U_{a} = \frac{|1 + \overline{b} a |}{1 + \overline{b} a} U_{\frac{a + b}{1 + b \overline{a}}}}$ \label{a}
				\item $U_{a}^{*} = U_{-a}$
				\item $U_{\phi} U_{\psi} = U_{\psi \circ \phi}$
				\item $U_{\phi}^{*} = U_{\phi^{-1}}$ \columnbreak
				\item $U_{\psi} U_b = U_b U_{\psi}$
				\item $U_{\#} U_a = U_{\overline{a}} U_{\#}$
				\item $U_{\#} U_{\psi} = U_{\psi^{\#}} U_{\#}$
			\end{enumerate}
		\end{multicols}
	\end{Proposition}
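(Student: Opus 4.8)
The plan is to verify each of the seven identities by a direct computation with the explicit formulas for $U_{a}$, $U_{\psi}$, and $U_{\#}$, sorting the work into two groups. Identities (i), (iii), (v), (vi), and (vii) each assert that a composite of two of the basic unitaries equals another basic unitary up to a unimodular scalar; these are the ones requiring the actual computation. Identities (ii) and (iv) identify adjoints and will follow once the first group is in hand: taking $b=-a$ in (i) gives $U_{-a}U_{a}=U_{0}=I$ (the scalar $|1+\overline{(-a)}a|/(1+\overline{(-a)}a)$ equals $1$ since $1-|a|^{2}>0$, and $U_{0}$ is multiplication by $1$), so $U_{a}^{*}=U_{a}^{-1}=U_{-a}$; and taking $\phi=\psi^{-1}$ in (iii) gives $U_{\psi^{-1}}U_{\psi}=U_{\psi\circ\psi^{-1}}=I$, since $\sqrt{1}\,(f\circ\psi\circ\psi^{-1})=f$, so $U_{\psi}^{*}=U_{\psi^{-1}}$.

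For the first group I would proceed uniformly in two steps. \emph{Step one: match the spaces.} Each side is a unitary from $\K_{\Theta}$ to some model space, and one must check that the two target spaces agree; this amounts to an identity among inner functions. For (i) it is the group law $b_{b}\circ b_{a}=\tfrac{1+b\overline{a}}{1+\overline{b}a}\,b_{c}$ with $c=\tfrac{a+b}{1+b\overline{a}}$, so that $(\Theta_{a})_{b}=\tfrac{1+b\overline{a}}{1+\overline{b}a}\Theta_{c}$ and hence $\K_{(\Theta_{a})_{b}}=\K_{\Theta_{c}}$, a model space being unchanged under multiplying its inner function by a unimodular constant. For (iii) it is associativity $(\Theta\circ\psi)\circ\phi=\Theta\circ(\psi\circ\phi)$; for (v), $\Theta_{b}\circ\psi=b_{b}\circ\Theta\circ\psi=(\Theta\circ\psi)_{b}$; for (vi), $(\Theta_{a})^{\#}=(\Theta^{\#})_{\overline{a}}$; and for (vii), $(\Theta\circ\psi)^{\#}=\Theta^{\#}\circ\psi^{\#}$ with $\psi^{\#}(z)=\overline{\psi(\overline{z})}$ again a disk automorphism. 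The $\#$-identities are immediate from $\overline{\Theta(\overline{z})}=\Theta^{\#}(z)$ together with the definition \eqref{eq-Factor} of $b_{a}$.

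\emph{Step two: match the operators.} Since membership in a model space is detected by boundary values and $\K_{\Theta}\cap H^{\infty}$ is dense, it suffices to apply each side to an arbitrary $f$ and compare the resulting boundary functions on $\dD$. By \eqref{Cro-def}, $U_{a}$ is multiplication by $\sqrt{1-|a|^{2}}/(1-\overline{a}\Theta)$; $U_{\psi}$ sends $f$ to $\sqrt{\psi'}\,(f\circ\psi)$; and by \eqref{J-sharp-defn}, $[U_{\#}f](z)=\overline{z}\,f(\overline{z})\,\Theta^{\#}(z)$. Thus every composite is again a multiplier, a weighted composition, or a weighted anti-composition, and the identity collapses to an elementary identity among multipliers and weights. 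The substantive inputs are: for (i), after the group law, the Pythagorean relation $1-|c|^{2}=(1-|a|^{2})(1-|b|^{2})/|1+\overline{b}a|^{2}$, which forces the ratio of the two multipliers to be exactly $|1+\overline{b}a|/(1+\overline{b}a)$; for (iii), (v), (vii), the chain rule $(\psi\circ\phi)'=(\psi'\circ\phi)\,\phi'$ and the relation $(\psi^{\#})'(z)=\overline{\psi'(\overline{z})}$; and for (vi) and (vii), the fact that $|\Theta|=1$ a.e.\ on $\dD$, which kills the factor $\Theta^{\#}(z)\Theta(\overline{z})=|\Theta(\overline{z})|^{2}$ that appears in the computation, together with the automorphism identity $\psi(w)^{2}\,\overline{\psi'(w)}=w^{2}\,\psi'(w)$ for $|w|=1$.

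The point I expect to be most delicate is the bookkeeping for the square root $\sqrt{\psi'}$ in (iii), (v), and (vii): one must fix a holomorphic branch on $\D$ (possible since $\psi'$ is zero-free there) and verify that composition and the $\#$-operation transport this branch consistently, so that the weights on the two sides agree \emph{exactly}, with no spurious sign. The parallel pitfall in (i) is ensuring the unimodular factor comes out as $|1+\overline{b}a|/(1+\overline{b}a)$ rather than its conjugate; this is settled by computing $1-|c|^{2}$ as above and taking positive square roots of positive reals throughout. Once these sign and branch issues are dispatched, the remaining verifications are routine algebra.
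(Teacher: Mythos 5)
Your proposal is correct and follows essentially the same route as the paper: a direct verification from the explicit formulas for $U_a$, $U_\psi$, $U_\#$, using for (i) exactly the identity $1-\bigl|\tfrac{\overline{a}+\overline{b}}{1+\overline{b}a}\bigr|^2=\tfrac{(1-|a|^2)(1-|b|^2)}{|1+\overline{b}a|^2}$, for (vi)--(vii) the boundary relation $|\Theta|=1$ a.e.\ together with an automorphism identity equivalent (after squaring) to the paper's \eqref{ppp}, and with (ii), (iv) deduced from (i), (iii) just as in the paper. The only difference is organizational (your explicit ``match the target spaces'' step and the branch bookkeeping for $\sqrt{\psi'}$), which the paper leaves implicit.
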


	\begin{proof}[Proof of (i) and (ii)]
		To obtain (i), we employ the identity
		\begin{equation*}
		1 - \left|\frac{\overline{a} + \overline{b}}{1 + \overline{b} a}\right|^2 = \frac{(1 - |a|^2) (1 - |b|^2)}{|1 + \overline{b} a|^2},
		\end{equation*}
		from which it follows that
		\begin{align*}
			U_b U_a f 
			& = U_{b}\left(\frac{\sqrt{1 - |a|^2}}{1 - \overline{a} \Theta} f\right)\\
			& = \frac{\sqrt{1 - |b|^2}}{1 - \overline{b} \Theta_a} \frac{\sqrt{1 - |a|^2}}{1 - \overline{a} \Theta} f\\
			& = \frac{\sqrt{1 - |b|^2}}{1 - \overline{b}( \frac{\Theta - a}{1 - \overline{a} \Theta})} \frac{\sqrt{1 - |a|^2}}{1 - \overline{a} \Theta} f\\
			& = \frac{\sqrt{1 - |a|^2}\sqrt{1 - |b|^2}  }{1 - \overline{a}\Theta - \overline{b}\Theta + a \overline{b}}f \\
			& = \frac{\sqrt{1 - |b|^2} \sqrt{1 - |a|^2}}{1 + \overline{b} a}\cdot \frac{f}{1 - \frac{\overline{a} + \overline{b}}{1 + \overline{b} a} \Theta} \\
			& = \frac{\sqrt{1 - |b|^2} \sqrt{1 - |a|^2}}{\sqrt{1 -\left |\frac{\overline{a} + \overline{b}}{1 + \overline{b} a}\right|^2}}
				\cdot \frac{\sqrt{1 - \left|\frac{\overline{a} + \overline{b}}{1 + \overline{b} a}\right|^2}}{1 + \overline{b}a}
				\cdot \frac{1}{1 - \frac{\overline{a} + \overline{b}}{1 + \overline{b} a}\Theta} f\\
			&= \frac{|1 + \overline{b} a|}{1 + \overline{b} a}\cdot 
				\frac{\sqrt{1 - \left|\frac{\overline{a} + \overline{b}}{1 + \overline{b} a}\right|^2}}
				{1 - \frac{\overline{a} + \overline{b}}{1 + \overline{b} a}\Theta} f \\
			&=\frac{|1 + \overline{b} a |}{1 + \overline{b} a} U_{\frac{a + b}{1 +b  \overline{ a}}} f .
		\end{align*}
		Statement (ii) follows immediately from (i) and the definition \eqref{Cro-def} of the Crofoot transform $U_a$.
	\end{proof}
	
	\begin{proof}[Proof of (iii) and (iv)]
		For (iii), simply note that
		\begin{align*}
			U_{\phi} U_{\psi} f 
			& = U_{\phi} \sqrt{\psi'} (f \circ \psi)\\
			& = \sqrt{\phi'} \sqrt{\psi'(\phi)} f(\psi(\phi))\\
			& = \sqrt{(\psi \circ \phi)'} f \circ (\psi \circ \phi)\\
			& = U_{\psi \circ \phi} f.
		\end{align*}
		Statement (iv) is an immediate consequence of (iii).
	\end{proof}

	\begin{proof}[Proof of (v)]
		This is a straightforward computation:
		\begin{align*}
			U_{\psi} U_b f & = U_{\psi} \left( \frac{\sqrt{1 - |b|^2}}{1 - \overline{b} \Theta} f\right)\\
			& =  \sqrt{\psi'} \frac{\sqrt{1 - |b|^2}}{1 - \overline{b} (\Theta \circ \psi)} (f \circ \psi)\\
			& = U_{b} U_{\psi} f. \qedhere
		\end{align*}
	\end{proof}

	\begin{proof}[Proof of (vi)]
		Regarding $z$ as an element of the unit circle, we use \eqref{J-sharp-defn} to obtain
		\begin{align*}
			U_{\#} U_{a} f  
			& = U_{\#} \left( \frac{\sqrt{1 - |a|^2}}{1 - \overline{a} \Theta} f\right)\\
			& = \frac{\sqrt{1 - |a|^2}}{1 - \overline{a} \Theta(\overline{z})} \overline{z} f(\overline{z}) (\Theta_a)^{\#}\\
			& =  \frac{\sqrt{1 - |a|^2}}{1 - \overline{a} \Theta(\overline{z})} \overline{z} f(\overline{z})
				 \frac{\overline{\Theta(\overline{z})} - \overline{a}}{1 - a \overline{\Theta(\overline{z})}}\\
			& = \frac{\sqrt{1 - |a|^2}}{1 - \overline{a} \Theta(\overline{z})} \overline{z} f(\overline{z}) 
				\frac{ \Theta^{\#}(z)(1 - \overline{a} \Theta(\overline{z}))}{1 - a \Theta^{\#}(z)}\\
			& = \frac{\sqrt{1 - |a|^2}}{1 - a \Theta^{\#}} \overline{z} f(\overline{z}) \Theta^{\#}\\
			& = U_{\overline{a}} U_{\#} f. \qedhere
		\end{align*}
	\end{proof}
	
	\begin{proof}[Proof of (vii)]
		We first note that for any disk automorphism 
		\begin{equation*}
			\psi(z) = \zeta \frac{z - c}{1 - \overline{c} z},  \tag{$\zeta \in \dD, c \in \D$}
		\end{equation*}
		a simple computation shows that
		\begin{equation} \label{ppp}
			\sqrt{\psi'(\overline{z})} \overline{z} = \sqrt{(\psi^{\#})^{'}} \psi(\overline{z}), \quad z \in \dD.
		\end{equation}
		Using \eqref{J-sharp-defn} we conclude that
		\begin{align*}
			U_{\#} U_{\psi} f 
			& = U_{\#} \sqrt{\psi'} (f \circ \psi)\\
			& = \sqrt{\psi'(\overline{z})} (f \circ \psi)(\overline{z}) \overline{z} (\Theta \circ \psi)^{\#}\\
			& = \sqrt{\psi'(\overline{z})} (f \circ \psi)(\overline{z}) \overline{z} \overline{\Theta(\overline{\psi})}\\
			& =  \sqrt{(\psi^{\#})^{'}} \psi(\overline{z}) f(\psi(\overline{z})) \overline{z} \overline{\Theta(\overline{\psi})} && \text{(by \eqref{ppp})}\\
			& =  \sqrt{(\psi^{\#})^{'}} \overline{\psi^{\#}(z)} f(\overline{\psi^{\#}(z)}) \Theta^{\#} \circ \psi^{\#}\\
			& = U_{\psi^{\#}} (\overline{z} f(\overline{z}) \Theta^{\#})\\
			& = U_{\psi^{\#}} U_{\#} f. \qedhere
		\end{align*}
	\end{proof}

	Maintaining the notation \eqref{La}, \eqref{Lpsi}, and \eqref{Lsharp} established in
	Subsection \ref{SubsectionBasic}, we see that Proposition \ref{J-rel} has the following
	immediate corollary.

	\begin{Corollary} \label{formulas-L}
		\quad\newline\vspace{-0.25in}
		\begin{multicols}{2}
			\begin{enumerate}\addtolength{\itemsep}{0.5\baselineskip}
				\item $\Lambda_{b} \Lambda_{a} =  \Lambda_{\frac{a + b}{1 + b \overline{a}}}$
				\item $\Lambda_{a}^{-1} = \Lambda_{-a}$
				\item $\Lambda_{\phi} \Lambda_{\psi} = \Lambda_{\psi \circ \phi}$
				\item $\Lambda_{\phi}^{-1} = \Lambda_{\phi^{-1}}$ \columnbreak
				\item $\Lambda_{\psi} \Lambda_b = \Lambda_b \Lambda_{\psi}$.
				\item $\Lambda_{\#} \Lambda_a = \Lambda_{\overline{a}} \Lambda_{\#}$
				\item $\Lambda_{\#} \Lambda_{\psi} = \Lambda_{\psi^{\#}} \Lambda_{\#}$
			\end{enumerate}
		\end{multicols}
		\vspace{-0.1in}
		Consequently, any finite word in the $\Lambda$ spatial isomorphisms as above can be written as 
		$\Lambda =  \Lambda_{a} \Lambda_{\psi}$ or 
		$\Lambda = \Lambda_{a} \Lambda_{\#} \Lambda_{\psi}$, where we allow $a = 0$ and $\psi(z) = z$.
	\end{Corollary}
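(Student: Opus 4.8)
The plan is to prove the final statement of Corollary \ref{formulas-L}, namely that every finite word in the basic spatial isomorphisms $\Lambda_a$, $\Lambda_\psi$, $\Lambda_\#$ (and their inverses) reduces to either $\Lambda_a \Lambda_\psi$ or $\Lambda_a \Lambda_\# \Lambda_\psi$. Note first that by parts (ii) and (iv) of the corollary every inverse $\Lambda_a^{-1}$ and $\Lambda_\psi^{-1}$ is itself of the basic form, and that $\Lambda_\#$ is its own inverse (this follows from Proposition \ref{J-rel}(vi) with $a = 0$, or directly from $U_\# U_\# = I$, which is immediate from the definition \eqref{J-sharp-defn}); so it suffices to consider words in the generators $\Lambda_a$, $\Lambda_\psi$, $\Lambda_\#$ with no inverses appearing.

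First I would record two normal forms for the ``$\#$-free'' case. Using the composition rules (i) and (iii), any product of $\Lambda_a$'s collapses to a single $\Lambda_{a'}$ and any product of $\Lambda_\psi$'s collapses to a single $\Lambda_{\psi'}$; using the commutation rule (v), any word built only from $\Lambda_a$'s and $\Lambda_\psi$'s can be sorted so that all the Crofoot transforms stand to the left of all the automorphism transforms, giving the form $\Lambda_{a}\Lambda_{\psi}$ (allowing $a=0$, i.e. $\Lambda_0 = \mathrm{id}$, and $\psi(z)=z$). Then I would proceed by induction on the number $n$ of occurrences of $\Lambda_\#$ in the word. The base case $n = 0$ is the previous sentence. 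For $n = 1$, a word with exactly one $\Lambda_\#$ looks like $W_1 \Lambda_\# W_2$ where $W_1, W_2$ are $\#$-free; by the base case write $W_1 = \Lambda_{a_1}\Lambda_{\psi_1}$ and $W_2 = \Lambda_{a_2}\Lambda_{\psi_2}$, then push $\Lambda_{\psi_1}$ past $\Lambda_\#$ using (vii) to turn it into $\Lambda_{\psi_1^\#}$ on the right, absorb $\Lambda_{\psi_1^\#}$ into the left part $\Lambda_{a_2}$ using (v) and then (iii), and finally re-sort the resulting tail into the form $\Lambda_{a}\Lambda_{\psi}$; the leftover $\Lambda_{a_1}$ stays at the far left, yielding $\Lambda_{a_1}\Lambda_\# (\Lambda_a \Lambda_\psi)$, which after one more application of (vi) to move the $\Lambda_a$ left of $\Lambda_\#$ (producing $\Lambda_{\overline a}$) and combining with $\Lambda_{a_1}$ via (i) gives exactly $\Lambda_{a'} \Lambda_\# \Lambda_{\psi}$.

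For the inductive step with $n \ge 2$ occurrences of $\Lambda_\#$, I would use (vi) and (vii) to migrate every $\Lambda_\#$ to the left past any $\Lambda_a$ or $\Lambda_\psi$ that precedes it — each such move costs only a replacement $a \mapsto \overline a$ or $\psi \mapsto \psi^\#$ and never creates new $\Lambda_\#$'s — so that all the $\Lambda_\#$ factors become adjacent; then successive pairs $\Lambda_\# \Lambda_\# = \mathrm{id}$ cancel, dropping the count by $2$ and letting the induction hypothesis finish the job (the parity of $n$ dictates whether the final form has a $\Lambda_\#$ in it or not). The only mildly delicate point is bookkeeping: one must check that each elementary move (i)--(vii) strictly decreases a suitable complexity measure — say the pair (number of $\Lambda_\#$'s, then total length) in lexicographic order, or equivalently the number of ``inversions'' in which a $\Lambda_\#$ has a $\Lambda_a$ or $\Lambda_\psi$ to its left — so that the rewriting terminates. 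I expect this termination/confluence bookkeeping to be the main (though entirely routine) obstacle; all the algebra is supplied verbatim by Proposition \ref{J-rel} and the earlier parts of Corollary \ref{formulas-L}.
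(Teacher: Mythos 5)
Your proposal is correct and follows essentially the paper's route: the paper treats the corollary as an immediate consequence of Proposition \ref{J-rel} (the unimodular constants appearing there cancel under the conjugation $A \mapsto U A U^{*}$), and your induction on the number of $\Lambda_{\#}$ factors is exactly the routine word-rewriting the paper leaves implicit, with the useful extra observation that $\Lambda_{\#}$ is an involution. One minor correction: $\Lambda_{\#}\Lambda_{\#} = \mathrm{id}$ does not follow from Proposition \ref{J-rel}(vi) with $a = 0$ (that instance is vacuous), but your alternative justification --- $U_{\#}U_{\#} = I$ computed directly from \eqref{J-sharp-defn} --- is valid, so the argument is unaffected.
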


\subsection{Spatial isomorphisms of $\T_{\Theta}$ spaces}
%
	
	In \cite{TTOSIUES}, Cima and the authors
	showed that for two inner functions $\Theta_1$ and $\Theta_2$ the corresponding 
	spaces $\T_{\Theta_1}$ and $\T_{\Theta_2}$ of truncated Toeplitz operators are spatially isomorphic, i.e., $\mathcal{T}_{\Theta_1} \cong \mathcal{T}_{\Theta_2}$, if and only if either 
	\begin{equation*}
	\Theta_1 = \phi \circ \Theta_2 \circ \psi \quad \mbox{or} \quad \Theta_1 = \phi \circ (\Theta_2)^{\#} \circ \psi
	\end{equation*}
	for some disk automorphisms $\phi$ and $\psi$. Informally speaking, 
	the $\psi$ will come from applying the $\Lambda_{\psi}$ spatial isomorphism \eqref{Lpsi}, the $\Theta^{\#}$
	from applying $\Lambda_{\#}$ \eqref{Lsharp}, and $\phi$ from applying $\Lambda_a$ \eqref{La}. 
	We make this more precise with the following theorem.

	\begin{Theorem} \label{most-three-SI}
		If $\Lambda: \T_{\Theta_1} \to\T_{\Theta_2}$ is a spatial isomorphism, then
		$\Lambda =  \Lambda_{a} \Lambda_{\psi}$ or $\Lambda = \Lambda_{a} \Lambda_{\#} \Lambda_{\psi}$, 
		where we allow $a = 0$ and $\psi(z) = z$.
	\end{Theorem}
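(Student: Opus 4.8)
The plan is to show that the unitary $U\colon\K_{\Theta_1}\to\K_{\Theta_2}$ implementing $\Lambda$ (so that $\Lambda(A)=UAU^{*}$ and $U\T_{\Theta_1}U^{*}=\T_{\Theta_2}$) is, up to a unimodular scalar, a product of the basic unitaries $U_a$, $U_\psi$, $U_\#$; since conjugation by $U$ is insensitive to a unimodular scalar, Corollary~\ref{formulas-L} then rewrites the resulting word in the asserted normal form $\Lambda_a\Lambda_\psi$ or $\Lambda_a\Lambda_\#\Lambda_\psi$. The starting point is Sarason's classification of the rank-one operators in $\T_\Theta$ \cite[Thm.~5.1]{Sarason}: up to scalar multiples these are the operators $k^\Theta_\lambda\otimes C_\Theta k^\Theta_\lambda$ and $C_\Theta k^\Theta_\lambda\otimes k^\Theta_\lambda$ for $\lambda\in\D$, together with certain Hermitian rank-one operators built from boundary kernels. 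Because $U(x\otimes y)U^{*}=(Ux)\otimes(Uy)$, the identity $U\T_{\Theta_1}U^{*}=\T_{\Theta_2}$ forces $U$ to permute these families; inspecting the two slots and using that the boundary operators are exactly the Hermitian ones, one finds that a single alternative holds for \emph{all} $\lambda$: either $Uk^{\Theta_1}_\lambda$ is a nonzero scalar multiple of $k^{\Theta_2}_{\sigma(\lambda)}$, or $Uk^{\Theta_1}_\lambda$ is a nonzero scalar multiple of $C_{\Theta_2}k^{\Theta_2}_{\sigma(\lambda)}$, where $\sigma$ is a bijection of $\D$.

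In the second alternative we compose $\Lambda$ with $\Lambda_\#$ on the source side: since $U_\#$ interchanges reproducing kernels and conjugate kernels of the relevant model spaces up to unimodular constants (compare \eqref{J-sharp-defn} and \eqref{form-Jsharp}), the new implementing unitary falls under the first alternative, and a factor $\Lambda_\#$ will be reinstated at the end via Corollary~\ref{formulas-L}. So assume $Uk^{\Theta_1}_\lambda=\omega(\lambda)k^{\Theta_2}_{\sigma(\lambda)}$ with $\omega(\lambda)\neq 0$. Pairing this identity with an arbitrary $g\in\K_{\Theta_2}$ and using the reproducing property yields $(U^{*}g)(\lambda)=\overline{\omega(\lambda)}\,g(\sigma(\lambda))$, so $U^{*}$, hence $U$, is a weighted composition operator. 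Its composition symbol (some version of $\sigma$) is analytic on $\D$, is a bijection of $\D$ arising from a boundedly invertible operator, and is therefore a disk automorphism $\psi$; composing $U$ on the right with a suitable $U_\psi^{\pm1}$ reduces us to $\sigma=\mathrm{id}$. Then $Uk^{\Theta_1}_\lambda=\omega(\lambda)k^{\Theta_2}_\lambda$ for all $\lambda$, so $U^{*}$ is multiplication by $\overline\omega$; comparing with the explicit kernel \eqref{eq-ReproducingKernel} and using that $U^{*}$ maps $\K_{\Theta_2}$ \emph{onto} $\K_{\Theta_1}$ forces $\Theta_1=b_a\circ\Theta_2$ for some $a\in\D$ and $\overline\omega$ to be, up to a unimodular constant, the Crofoot weight $\sqrt{1-|a|^{2}}/(1-\overline a\Theta_2)$ of \eqref{Cro-def}. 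Thus $U$ is a unimodular multiple of a Crofoot transform, and, undoing the compositions made above, $U$ is a unimodular multiple of a word in $U_a$, $U_\psi$, $U_\#$. Corollary~\ref{formulas-L} then rewrites this word as $\Lambda_a\Lambda_\psi$ or $\Lambda_a\Lambda_\#\Lambda_\psi$ (allowing $a=0$ and $\psi(z)=z$), which is the claim.

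The main obstacle is the last reduction: extracting from the bare statement ``$U$ is a weighted composition operator between the two model spaces'' the conclusions that its inducing map is a genuine disk automorphism and that its weight is (up to a unimodular constant) a Crofoot weight $\sqrt{1-|a|^{2}}/(1-\overline a\Theta)$, with $\Theta_1$ and $\Theta_2$ correspondingly related by a M\"obius map. This is where the full force of the model-space structure enters — completeness of the kernel functions, the boundary behaviour that singles out the Hermitian rank-one operators, and the requirement that the target again be a space $\K_{\Theta}$ — and it is essentially the ``only if'' direction of \cite[Thm.~3.3]{TTOSIUES}; the present theorem is the observation that that argument in fact realizes $U$ as one of the listed words. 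Everything else (the rank-one bookkeeping, the $\#$-reduction, and the final normalization via Corollary~\ref{formulas-L}) is routine.
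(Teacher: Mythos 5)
Your proposal is correct and in substance coincides with the paper's proof: the paper simply cites the proof of \cite[Thm.~3.3]{TTOSIUES} --- which is precisely the rank-one/kernel-function/weighted-composition/Crofoot analysis you sketch --- to realize $\Lambda$ as a finite word in the basic spatial isomorphisms $\Lambda_a$, $\Lambda_\psi$, $\Lambda_\#$, and then applies Corollary~\ref{formulas-L} to put that word in the stated normal form. The only difference is presentational: you unpack the cited argument in outline (ultimately deferring its hard step to the same source), whereas the paper invokes it as a black box before the identical final normalization.
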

	
	\begin{proof}
		The proof of \cite[Thm.~3.3]{TTOSIUES} shows that there exists an inner function $\Theta$ and
		a finite sequence $\Lambda_1, \Lambda_2,\ldots, \Lambda_n$ of spatial isomorphisms from among the 
		families $\Lambda_{\psi}$, $\Lambda_{\#}$, and $\Lambda_a$ so that 
		\begin{equation*}
			(\Lambda_1 \cdots \Lambda_s) \Lambda (\Lambda_{s + 1} \cdots \Lambda_{n})
		\end{equation*}
		is the identity on $\T_{\Theta}$. Now apply Corollary \ref{formulas-L}.
	\end{proof}

\subsection{A density detail} In the next section we will need the following density result. We would like to thank Roman Bessonov for pointing this out to us.

\begin{Proposition} \label{density-Bess}
For any inner function $u$, the set $\{A_{\phi}^{u}: \phi \in L^{\infty}\}$ is weakly dense in $\mathcal{T}_{u}$. 
\end{Proposition}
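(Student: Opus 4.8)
The plan is to exploit the known structure of $\mathcal{T}_u$ together with a representation-of-symbols argument. Recall from \eqref{Sarason-zero} that every bounded truncated Toeplitz operator $A_\phi \in \mathcal{T}_u$ is, a priori, only guaranteed a symbol $\phi$ in $L^2$ (indeed in $\mathcal{K}_u + \overline{\mathcal{K}_u}$ modulo $uH^2 + \overline{uH^2}$, by Sarason's parametrization); the point is to approximate such an operator weakly by truncated Toeplitz operators with \emph{bounded} symbols. First I would reduce to a convenient dense subfamily of $\mathcal{T}_u$: by Sarason's theorem $\mathcal{T}_u$ is the weakly closed linear span of the rank-one operators $k_\lambda^u \otimes k_\mu^u$ and their relatives (equivalently, $\mathcal{T}_u$ is the $w^*$-closure of the operators $A_\phi^u$ with $\phi \in \mathcal{K}_u + \overline{\mathcal{K}_u}$), so it suffices to show that each such $A_\phi^u$ lies in the weak closure of $\{A_\psi^u : \psi \in L^\infty\}$.

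Next, the key step: given $\phi \in \mathcal{K}_u + \overline{\mathcal{K}_u}$, write $\phi = f + \overline{g}$ with $f, g \in \mathcal{K}_u$, and truncate. Since $\mathcal{K}_u \cap H^\infty$ is dense in $\mathcal{K}_u$ (noted in the introduction, using that the kernels $k_\lambda$ are bounded with dense span), choose bounded functions $f_n \to f$ and $g_n \to g$ in $\mathcal{K}_u$-norm. Then $\psi_n := f_n + \overline{g_n} \in L^\infty$, and I claim $A_{\psi_n}^u \to A_\phi^u$ weakly — in fact in the strong operator topology on the dense domain $\mathcal{K}_u \cap H^\infty$, after checking a uniform boundedness bound. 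The norm estimate $\|A_{\psi_n}^u - A_\phi^u\| = \|A_{(f_n - f) + \overline{(g_n - g)}}^u\|$ is controlled because a truncated Toeplitz operator with analytic (resp.\ co-analytic) symbol $h \in \mathcal{K}_u$ has norm $\|A_h^u\| \le \|h\|_{H^2}$-type control is \emph{false} in general; so instead I would argue weak convergence directly: for $p, q$ in the dense set $\mathcal{K}_u \cap H^\infty$, $\langle A_{\psi_n}^u p, q\rangle = \langle \psi_n p, q\rangle_{L^2} \to \langle \phi p, q\rangle_{L^2} = \langle A_\phi^u p, q\rangle$, since $\psi_n \to \phi$ in $L^2$ and $p\overline{q} \in L^1$ with $\|p\overline q\|_\infty < \infty$. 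Combined with a uniform bound $\sup_n \|A_{\psi_n}^u\| < \infty$, this gives weak convergence on all of $\mathcal{K}_u$.

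The main obstacle is precisely supplying that uniform operator-norm bound $\sup_n \|A_{\psi_n}^u\| < \infty$, since $L^2$-smallness of the symbol difference does \emph{not} control the operator norm. I would handle this by arranging the approximation more carefully: rather than arbitrary $f_n \to f$ in $\mathcal{K}_u$, use dilations — set $f_r(z) := f(rz)$ (or the analogous compression-of-dilation inside $\mathcal{K}_u$, e.g.\ $P_u$ applied to $f_r$), which are bounded, converge to $f$ in norm as $r \to 1^-$, and satisfy the crucial operator-norm bound $\|A_{f_r}^u\| \le \|A_f^u\|$ because dilation is a contractive operation compatible with the $H^\infty$/$\mathcal{T}_u$ structure (dilation of Toeplitz symbols is a contraction on Toeplitz operators, and this passes to the compression $\mathcal{K}_u$). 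With $\psi_r := f_r + \overline{g_r}$ one gets both $\psi_r \to \phi$ in $L^2$ and $\|A_{\psi_r}^u\| \le \|A_f^u\| + \|A_g^u\| \le 2\|A_\phi^u\|$ (using $A_f^u = \tfrac12(A_\phi^u + (A_\phi^u)^*$-type splittings), yielding the desired uniform bound and hence weak convergence $A_{\psi_r}^u \to A_\phi^u$. Finally, taking weakly-closed linear spans and invoking Sarason's density of $\{A_\phi^u : \phi \in \mathcal{K}_u + \overline{\mathcal{K}_u}\}$ in $\mathcal{T}_u$ completes the proof.
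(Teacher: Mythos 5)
There is a genuine gap, and it sits exactly where you flagged it: the uniform norm bound. Your plan splits a Sarason symbol $\phi = f + \overline{g}$, $f,g \in \mathcal{K}_u$, and bounds $\|A^u_{\psi_r}\| \le \|A^u_f\| + \|A^u_g\| \le 2\|A^u_\phi\|$. But the decomposition $A^u_\phi = A^u_f + A^u_{\overline{g}}$ need not consist of two \emph{bounded} operators, and no splitting of the form you suggest can repair this: $(A^u_\phi)^* = A^u_{\overline{f}+g}$, so symmetrizing only produces symbols like $2\Re(f+g)$, not $f$ alone. Worse, if $A^u_f$ and $A^u_{\overline{g}}$ were separately bounded, then by Sarason's commutant lifting theorem each would admit a bounded symbol, hence so would $A^u_\phi$ --- contradicting the example in \cite{BCFMT} of a bounded truncated Toeplitz operator with no bounded symbol. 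So the inequality $\|A^u_f\| + \|A^u_g\| \lesssim \|A^u_\phi\|$ is false in general, and the case it fails is precisely the case where the proposition has content. The second unsupported step is the dilation claim $\|A^u_{f_r}\| \le \|A^u_f\|$: the usual proof that Poisson regularization is contractive for genuine Toeplitz operators writes $\phi_r$ as an average of rotations of $\phi$ and uses that rotation is a unitary of $H^2$ commuting with the Toeplitz structure; rotations do \emph{not} preserve $\mathcal{K}_u$ (they map it onto $\mathcal{K}_{u(e^{it}\cdot)}$), so the averaging argument does not pass to compressions, and the alternative ``compress $T_{\phi_r}$'' route again presupposes a bounded symbol. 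Without that uniform bound, weak convergence tested on the dense set $\mathcal{K}_u \cap H^\infty$ does not upgrade to weak convergence of the operators.

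The paper avoids all norm estimates by working on the predual. By \cite{BBK}, $\mathcal{T}_u$ is isometrically the dual of the space $X_u$ of sums $\sum f_j \overline{g_j}$ with $f_j, g_j \in \mathcal{K}_u$ and $\sum \|f_j\|\|g_j\| < \infty$, under the pairing $\bigl(\sum f_j\overline{g_j}, A\bigr) = \sum \langle Af_j, g_j\rangle$, and the resulting weak-$*$ topology coincides with the weak topology on $\mathcal{T}_u$. Weak density of $\{A^u_\phi : \phi \in L^\infty\}$ then reduces to showing its pre-annihilator in $X_u$ is trivial: if $F = \sum f_j\overline{g_j}$ annihilates every $A^u_\phi$ with $\phi \in L^\infty$, then $\int \phi F\, dm = 0$ for all $\phi \in L^\infty$, and since $F \in L^1$ this forces $F = 0$. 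If you want to salvage a constructive approximation argument along your lines, you would essentially need to prove a regularization bound of the type $\|A^u_{\phi_r}\| \le C\|A^u_\phi\|$ from scratch, which is a nontrivial assertion about truncated Toeplitz operators rather than a routine transfer from the Toeplitz case; the duality route is the efficient way around it.
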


\begin{proof}
In \cite{BBK} they define the space 
$$X_{u} := \left\{\sum f_j \overline{g_j}: f_j, g_j \in \mathcal{K}_u, \sum_{j} \|f_j\| \|g_j\| < \infty\right\}$$ with norm defined as the infimum of $\sum \|f_j\| \|g_j\|$ over all possible representations of the element of the form $\sum f_j \overline{g_j}$. Notice, by the Cauchy-Schwarz inequality, that $\sum f_j \overline{g_j}$ converges in $L^1$ and so $X_u \subset L^1$. In the same paper they show that the dual of $X_u$ can be isometrically identified with $\mathcal{T}_u$ via the pairing 
$$\left(\sum f_j \overline{g_j}, A\right) := \sum \langle A f_j, g_j \rangle.$$ They go on further to show that the ultra-weak topology on $\mathcal{T}_u$, given by the above pairing, coincides with the weak topology on $\mathcal{T}_u$. 

So to show that $\{A_{\phi}^u: \phi \in L^{\infty}\}$ is weakly dense in $\mathcal{T}_u$, we just need to show that the pre-annihilator of this set is zero. To this end, suppose $F = \sum f_j \overline{g_j} \in X_u$ with 
$(F, A_{\phi}) = 0$ for all $\phi \in L^{\infty}$. Using the fact that $\phi$ is bounded and the sum defining $F$ converges in $L^1$ we see that 
$$(F, A^u_{\phi}) = \sum \langle A^u_{\phi} f_j, g_j \rangle  = \sum \int \phi f_i \overline{g_j} dm = \int \phi \sum f_j \overline{g_j} dm = \int \phi F dm$$
for all $\phi \in L^{\infty}$. Since $F \in L^1$, we conclude that $F = 0$ almost everywhere and so the pre-annihilator of 
$\{A^{u}_{\phi}: \phi \in L^{\infty}\}$ is zero. 
\end{proof}

\begin{Remark}
It can be the case, for example when $u$ is a one-component inner function \cite{BBK},  that $\{A_{\phi}^{u}: \phi \in L^{\infty}\} = \mathcal{T}_u$, i.e., every bounded truncated Toeplitz operator on $\mathcal{K}_u$ has a bounded symbol. It can also be the case that $\{A_{\phi}^{u}: \phi \in L^{\infty}\}$ is a proper subset of $\mathcal{T}_u$ \cite{BCFMT}. In either case, Proposition \ref{density-Bess} shows that $\{A_{\phi}^{u}: \phi \in L^{\infty}\} $ is weakly dense in $\mathcal{T}_u$. 
\end{Remark}

\section{Spatial isomorphisms of Sedlock algebras}

	For a fixed inner function $\Theta$ and $a, a' \in \widehat{\C}$, when is $\mathcal{B}^{a}_{\Theta} \simeq \mathcal{B}^{a'}_{\Theta}$? When $a, a' \in \dD$ it is possible to give a complete answer. 	
	For a positive measure $\mu$ on $\dD$, let
	$\kappa(\mu) = (\epsilon, n)$ where $0 \leq n \leq \infty$ is the number of atoms of $\mu$ and $\epsilon$ is 
	$0$ if $\mu$ is purely atomic and $1$ if $\mu$ has a (non-zero) continuous part. 
	An old theorem of Halmos and von Neumann \cite{MR1721402, MR0006617} asserts that
	$L^{\infty}(\mu) \cong L^{\infty}(\nu)$ (considered as multiplication operators on $L^2(\mu)$, respectively $L^2(\nu)$) if and only if $\kappa(\mu) = \kappa(\nu)$.

	\begin{Theorem} \label{Clark}
		If $\Theta$ is an inner function, $a, a' \in \dD$, and $\mu_a, \mu_{a'}$ denote the corresponding
		Clark measures, then
		\begin{equation*}
			\B_{\Theta}^a \cong \B_{\Theta}^{a'} 
			\quad\Leftrightarrow \quad
			\kappa(\mu_a) = \kappa(\mu_{a'}).
		\end{equation*}
	\end{Theorem}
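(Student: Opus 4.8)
The plan is to reduce everything to the Halmos--von Neumann classification of multiplication algebras via Remark~\ref{Clark-needed}. Recall that for $a \in \dD$ the operator $S_{\Theta}^a$ is a Clark unitary, and by the discussion surrounding Remark~\ref{Clark-remark} it is unitarily equivalent to multiplication by $\zeta$ on $L^2(\mu_a)$. Consequently, by Proposition~\ref{BWS}(i) (or directly from the cited Wermer theorem), $\B_{\Theta}^a = \W(S_{\Theta}^a)$ is spatially isomorphic to $L^{\infty}(\mu_a)$, viewed as the algebra of multiplication operators on $L^2(\mu_a)$; this is exactly Remark~\ref{Clark-needed}. The same holds for $a'$.

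From here the argument splits into the two implications. For the ($\Leftarrow$) direction, if $\kappa(\mu_a) = \kappa(\mu_{a'})$, then the Halmos--von Neumann theorem gives a spatial isomorphism $L^{\infty}(\mu_a) \cong L^{\infty}(\mu_{a'})$; composing with the spatial isomorphisms $\B_{\Theta}^a \cong L^{\infty}(\mu_a)$ and $L^{\infty}(\mu_{a'}) \cong \B_{\Theta}^{a'}$ furnished above yields $\B_{\Theta}^a \cong \B_{\Theta}^{a'}$, since a composition of unitary conjugations is again a unitary conjugation. For the ($\Rightarrow$) direction, suppose $\B_{\Theta}^a \cong \B_{\Theta}^{a'}$. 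Chaining the spatial isomorphisms again, we obtain $L^{\infty}(\mu_a) \cong L^{\infty}(\mu_{a'})$ as multiplication algebras on the respective $L^2$ spaces. The Halmos--von Neumann theorem then forces $\kappa(\mu_a) = \kappa(\mu_{a'})$.

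The one point that needs a little care, and which I expect to be the main obstacle, is making sure the spatial isomorphism in Remark~\ref{Clark-needed} really is an \emph{algebra} isomorphism carrying $\B_{\Theta}^a$ \emph{onto} all of $L^{\infty}(\mu_a)$, not merely onto a subalgebra. This is where cyclicity of the Clark operator is essential: since $S_{\Theta}^a$ is a cyclic singular unitary, $\{S_{\Theta}^a\}'$ coincides with the von Neumann algebra it generates (Fuglede plus the double commutant theorem), and Wermer's theorem identifies that with $\W(S_{\Theta}^a) = \B_{\Theta}^a$; transported to $L^2(\mu_a)$, the cyclic vector becomes a cyclic (hence separating) vector for $L^{\infty}(\mu_a)$, and the commutant of multiplication by $\zeta$ on $L^2(\mu_a)$ is precisely $L^{\infty}(\mu_a)$. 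So the unitary equivalence $S_{\Theta}^a \sim M_{\zeta}$ on $L^2(\mu_a)$ does intertwine the full algebras $\B_{\Theta}^a$ and $L^{\infty}(\mu_a)$. Once that identification is pinned down, the theorem is an immediate consequence of Halmos--von Neumann, and no further computation is required.
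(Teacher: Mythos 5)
Your proposal is correct and follows essentially the same route as the paper: the paper's proof simply invokes the spatial isomorphism $\B_{\Theta}^{a} \cong L^{\infty}(\mu_a)$ from Remark \ref{Clark-needed} (itself justified exactly as you do, via cyclicity of the Clark unitary, Fuglede, the double commutant theorem, and Wermer's theorem) and then applies the Halmos--von Neumann theorem in both directions. Your extra care about the identification being onto all of $L^{\infty}(\mu_a)$ is precisely the content the paper delegates to its Section 2 discussion, so nothing is missing.
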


	\begin{proof}
		From our discussion in Remark \ref{normal-ops} we have the spatial isomorphisms  $\B_{\Theta}^{a} \cong L^{\infty}(\mu_a)$ and 
		$\B_{\Theta}^{a'} \cong L^{\infty}(\mu_{a'})$. Applying the Halmos-von Neumann theorem referred to above
		yields the result.
	\end{proof}
	
	\begin{Corollary} \label{nothingtoprove}
		If $\Theta$ is a finite Blaschke product, then $\B_{\Theta}^{a} \cong \B_{\Theta}^{a'}$ whenever $a, a' \in \dD$.
	\end{Corollary}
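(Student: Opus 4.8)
The plan is to reduce everything to Theorem \ref{Clark} together with the explicit formula \eqref{inner-atoms-intro} for Clark measures of finite Blaschke products recorded in Remark \ref{Clark-remark}. By Theorem \ref{Clark}, the assertion $\B_{\Theta}^{a} \cong \B_{\Theta}^{a'}$ is equivalent to $\kappa(\mu_a) = \kappa(\mu_{a'})$, so it suffices to prove that $\kappa(\mu_a)$ does not depend on the choice of $a \in \dD$ when $\Theta$ is a finite Blaschke product.

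To that end, suppose $\Theta$ has degree $n$. First I would note that $\Theta$ extends analytically across $\dD$ and that $\Theta|_{\dD}\colon \dD \to \dD$ is an $n$-to-one covering map: indeed $|\Theta'(\zeta)| = \sum_j \frac{1 - |\alpha_j|^2}{|\zeta - \alpha_j|^2} > 0$ for every $\zeta \in \dD$ (here $\alpha_j$ are the zeros of $\Theta$), so $\Theta|_{\dD}$ is a local diffeomorphism of the circle of degree $n$, and hence every fiber $\{\zeta \in \dD : \Theta(\zeta) = a\}$ consists of exactly $n$ distinct points. Consequently, for each $a \in \dD$ the carrier $E_a = \{\zeta \in \dD : \lim_{r \to 1^-}\Theta(r\zeta) = a\}$ is an $n$-point set $\{\zeta_1,\dots,\zeta_n\}$, and by \eqref{inner-atoms-intro} the Clark measure is the purely atomic measure $\mu_a = \sum_{j=1}^n \frac{1}{|\Theta'(\zeta_j)|}\delta_{\zeta_j}$ with exactly $n$ atoms and no continuous part.

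It then follows that $\kappa(\mu_a) = (0, n)$ for every $a \in \dD$; in particular $\kappa(\mu_a) = \kappa(\mu_{a'})$ for all $a, a' \in \dD$. Applying Theorem \ref{Clark} gives $\B_{\Theta}^{a} \cong \B_{\Theta}^{a'}$, as desired. I do not expect any genuine obstacle here: the only point requiring a word of justification is that $\Theta(\zeta) = a$ has exactly $n$ distinct solutions on $\dD$, which is immediate from the non-vanishing of $\Theta'$ on the circle; everything else is a direct citation of Theorem \ref{Clark} and Remark \ref{Clark-remark}.
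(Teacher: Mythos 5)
Your proof is correct and follows essentially the same route as the paper: both reduce the claim via Theorem \ref{Clark} to showing $\kappa(\mu_a)=\kappa(\mu_{a'})$, and then use the fact (recorded in \eqref{inner-atoms-intro} of Remark \ref{Clark-remark}) that for a degree-$n$ Blaschke product each Clark measure is purely atomic with exactly $n$ atoms. The only difference is that you supply the (correct) justification that $\Theta'$ does not vanish on $\dD$, hence each fiber has exactly $n$ distinct points, where the paper simply cites the literature.
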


	\begin{proof}
		Let $n$ denote the number of zeros of $\Theta$, counted according to their multiplicity.
		If $a, a' \in \dD$, then, from \eqref{inner-atoms-intro},  the Clark measures $\mu_a$ and $\mu_{a'}$ are both discrete and each consists  
		precisely of $n$ atoms (see also \cite[p.~207]{CMR}).
	\end{proof}

	For a finite Blaschke product $\Theta$, the preceding corollary indicates that the Sedlock algebras
	$\B_{\Theta}^a$ for $a \in \dD$ are all mutually spatially isomorphic.  In other words, spatial isomorphism
	induces an equivalence relation upon these algebras which yields precisely one equivalence class.  It is somewhat
	surprising, however, to learn that there exists an inner function $\Theta$ for which the Sedlock algebras
	$\B_{\Theta}^a$ for $a \in \dD$ form precisely \emph{two} equivalence classes.
		
	\begin{Corollary}
		There exists an inner function $\Theta$ such that 
		\begin{enumerate}\addtolength{\itemsep}{0.5\baselineskip}
			\item $\mathcal{B}_{\Theta}^{a} \cong \mathcal{B}_{\Theta}^{a'}$ for all $a, a' \in \dD \setminus \{1\}$,
			\item $\mathcal{B}_{\Theta}^{1} \not \cong \mathcal{B}_{\Theta}^{a}$ for all $a \in \dD \setminus \{1\}$.
		\end{enumerate}
	\end{Corollary}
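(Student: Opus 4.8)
The plan is to exhibit a single inner function $\Theta$ whose Clark measures $\mu_a$ realize exactly two values of the invariant $\kappa(\mu_a)$ as $a$ ranges over $\dD$, and then invoke Theorem \ref{Clark}. The atomic inner function $\Theta(z) = e^{-\frac{1+z}{1-z}}$ discussed in Remark \ref{Clark-remark} is the natural candidate: for each $a \in \dD$ we computed there that $\mu_a = \sum_{\Theta(\zeta)=a} \frac{|\zeta-1|^2}{2}\delta_\zeta$, a purely atomic measure with countably many atoms clustering only at $\zeta = 1$. The point is that the solution set $\{\zeta \in \dD : \Theta(\zeta) = a\}$ is infinite for every $a \in \dD$, so for every such $a$ we have $\kappa(\mu_a) = (0, \infty)$. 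By Theorem \ref{Clark} this already forces $\B_\Theta^a \cong \B_\Theta^{a'}$ for all $a, a' \in \dD$, which is consistent with the corollary but does not yet produce two classes.

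To get a second equivalence class, I would instead choose $\Theta$ to be a product of a suitable atomic (or singular) inner function with a finite Blaschke-type correction so that the Clark measure $\mu_1$ at the single distinguished point $a = 1$ has a continuous part (so $\kappa(\mu_1) = (1, \cdot)$), while $\mu_a$ for all other $a \in \dD$ is purely atomic with infinitely many atoms (so $\kappa(\mu_a) = (0, \infty)$). Concretely, one takes $\Theta$ whose associated Herglotz measure $\sigma$ (the boundary measure of $\Re\frac{1+\Theta}{1-\Theta}$, essentially $\mu_1$) is chosen to have a nontrivial continuous singular component — for instance $\sigma$ could be a singular continuous measure plus a discrete part — and then checks, via the Aleksandrov disintegration / the explicit formula for Clark measures of a singular inner function, that for $a \neq 1$ the measure $\mu_a$ is purely atomic (this is automatic when the support structure forces $E_a$ to be countable for $a \neq 1$) with infinitely many atoms. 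Then Theorem \ref{Clark} gives $\B_\Theta^a \cong \B_\Theta^{a'}$ for all $a, a' \in \dD \setminus \{1\}$ since they share $\kappa = (0,\infty)$, and $\B_\Theta^1 \not\cong \B_\Theta^a$ for $a \neq 1$ since $\kappa(\mu_1) = (1, \infty) \neq (0, \infty) = \kappa(\mu_a)$.

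The key steps, in order: (1) select the inner function $\Theta$ explicitly — say by prescribing its Herglotz representing measure to be a sum of normalized Lebesgue-type-free singular continuous measure and countably many point masses, chosen so the resulting function is genuinely inner; (2) identify $\mu_1$ with (a constant multiple of) this representing measure and observe it has a continuous part, so $\epsilon = 1$; (3) show that for every $a \in \dD \setminus \{1\}$ the carrier $E_a = \{\zeta : \Theta^*(\zeta) = a\}$ is countably infinite and $\mu_a$ is purely atomic with $n = \infty$, using the classical description of Clark measures for singular inner functions (Aleksandrov's theorem) together with the fact that the continuous part of $\mu_a$ is governed by the continuous part of $\Theta$'s singular measure in a way that vanishes away from $a=1$ — more carefully, one uses that the family $\{\mu_a\}$ is a disintegration of Lebesgue measure, and Lebesgue measure on $\dD$ only sees the values $\Theta^*$ takes, so the continuous parts cancel except possibly at the "degenerate" value; (4) apply Theorem \ref{Clark} twice to conclude (i) and (ii).

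The main obstacle is step (3): arranging that the continuous singular component shows up in $\mu_1$ \emph{and only} in $\mu_1$. Generic singular inner functions will typically have continuous Clark measures $\mu_a$ for \emph{all} $a \in \dD$, which would collapse everything back to one class. One needs a construction where the singular continuous mass is "concentrated at a single boundary angle in the $\Theta$-direction" — e.g. the atomic inner function itself already has the feature that $\mu_a$ is purely atomic for $a \in \dD$ but $\mu_1$ (the Clark measure at the boundary point $1$, in the appropriate limiting sense, i.e. the representing measure $\delta_1$-like object) is the single atom at $1$; however that gives $\kappa(\mu_1) = (0,1)$ rather than $(0,\infty)$, which would also distinguish it but in a different way than stated. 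So the real work is to engineer $\Theta$ so that $\mu_1$ has \emph{both} infinitely many atoms and a continuous part while every other $\mu_a$ stays purely atomic with infinitely many atoms — I expect this is done by composing or multiplying the atomic inner function with a carefully chosen singular inner function supported so that its continuous Clark mass only appears along the fiber over $a = 1$. Verifying this disjointness rigorously, via Aleksandrov's theory of spectral families and the absolute continuity dichotomy, is where the technical care lies.
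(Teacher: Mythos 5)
Your reduction of the problem to Theorem \ref{Clark} is the right first move, and your observation that the atomic inner function of Remark \ref{Clark-remark} cannot work (all its Clark measures $\mu_a$, $a\in\D$, are purely atomic with infinitely many atoms, so they all share the same invariant $\kappa$) is correct. But the entire content of the corollary is the \emph{existence} of an inner function whose Clark measures realize two distinct values of $\kappa$, split as ``$a=1$ versus $a\neq 1$,'' and this is exactly the step you do not supply. Your step (3) is announced as the main obstacle and then left open: you propose to ``engineer'' $\Theta$ by multiplying or composing the atomic inner function with another singular inner function so that a continuous component appears in $\mu_1$ and only in $\mu_1$, but you give neither a construction nor a citation, and the heuristic itself is unsupported --- the Clark measures of a product $\Theta_1\Theta_2$ are not governed by the Clark measures of the factors in any simple way, so there is no reason the continuous mass would ``concentrate on the fiber over $a=1$'' as you hope. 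As written, the argument proves the corollary only conditionally on an existence statement that is at least as hard as the corollary itself.

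The paper closes this gap by quoting a known example rather than building one, and with the dichotomy distributed the opposite way from yours: there exists an inner function $\Theta$ (going back to Donoghue's work on rank-one perturbations; see also Poltoratski--Sarason) for which $\mu_1$ is \emph{discrete} while $\mu_a$ is \emph{continuous singular} for every $a\in\dD\setminus\{1\}$. Then Theorem \ref{Clark} gives (i) because all the measures $\mu_a$, $a\neq 1$, share $\kappa=(1,0)$, and (ii) because $\mu_1$ is purely atomic, so its $\kappa$ differs from that common value. If you want to salvage your write-up, the cleanest fix is to replace your engineering attempt by this citation (or any reference providing an inner function whose family of Clark measures has exactly one exceptional spectral type at $a=1$); alternatively you would need to actually carry out, with full proofs via Aleksandrov's theory, the construction you sketch, which is substantially more work than the corollary demands.
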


	\begin{proof}
		This is a simple consequence of Theorem \ref{Clark} and the fact that there exists
		an inner function $\Theta$ such that $\mu_1$ is discrete but $\mu_a$ is continuous singular for every 
		$a \in \dD \setminus \{1\}$ \cite{Donoghue, MR2198367}.
	\end{proof}
	
	Provided that $a, a' \in \dD$, Theorem \ref{Clark} provides a complete characterization of when two Sedlock algebras
	$\B_{\Theta}^a$ and $\B_{\Theta}^{a'}$ are spatially isomorphic.  In this setting,
	a straightforward, measure-theoretic answer is to be expected since $\B_{\Theta}^a$ and $\B_{\Theta}^{a'}$ 
	are both algebras of normal operators.   On the other hand, if $a, a' \in \D$ then the situation turns out to be quite different.
	
	\begin{Theorem} \label{t-main-Sedlock}
		If $\Theta$ is an inner function and $a, a' \in \D$, then $\mathcal{B}_{\Theta}^{a} \cong \mathcal{B}_{\Theta}^{a'}$
		if and only if there is a unimodular constant $\zeta$ and a disk automorphism $\psi$ such that
		\begin{equation*}
			\Theta = b_{-a}(\zeta b_{a'}) \circ \Theta \circ \psi,
		\end{equation*}
		where $b_c$, for $c \in \D$, denotes the disk automorphism \eqref{eq-Factor}.
	\end{Theorem}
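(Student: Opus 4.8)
The plan is to reduce, by means of the Crofoot transform, to the algebras of analytic truncated Toeplitz operators, and then to show that any spatial isomorphism between two such algebras automatically extends to one of the ambient $\mathcal{T}$ spaces, where Theorem~\ref{most-three-SI} applies. Concretely, by \eqref{H/aH} we have $\mathcal{B}_\Theta^{a}\cong\mathcal{B}_{\Theta_a}^{0}$ and $\mathcal{B}_\Theta^{a'}\cong\mathcal{B}_{\Theta_{a'}}^{0}$, where $\Theta_a=b_a\circ\Theta$ and $\Theta_{a'}=b_{a'}\circ\Theta$ are inner functions precisely because $a,a'\in\D$; so it suffices to prove that for inner functions $u,w$ one has $\mathcal{B}_u^{0}\cong\mathcal{B}_w^{0}$ if and only if $u=\zeta\,(w\circ\psi)$ for some unimodular constant $\zeta$ and some disk automorphism $\psi$. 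Granting this, applying it with $u=\Theta_a$, $w=\Theta_{a'}$ turns $\mathcal{B}_\Theta^{a}\cong\mathcal{B}_\Theta^{a'}$ into the relation $b_a\circ\Theta=(\zeta b_{a'})\circ\Theta\circ\psi$, and composing with $b_a^{-1}=b_{-a}$ gives exactly $\Theta=b_{-a}(\zeta b_{a'})\circ\Theta\circ\psi$. The ``if'' half of the reduced statement is immediate: multiplying an inner function by a unimodular constant changes neither the model space nor $\mathcal{B}^{0}$, so if $u=\zeta\,(w\circ\psi)$ then $\mathcal{B}_u^{0}=\mathcal{B}_{w\circ\psi}^{0}=\Lambda_\psi(\mathcal{B}_w^{0})$ by Proposition~\ref{B-psi}.

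For the ``only if'' half, suppose $U\colon\mathcal{K}_u\to\mathcal{K}_w$ is unitary with $U\mathcal{B}_u^{0}U^{*}=\mathcal{B}_w^{0}$. The key step is to recover $\mathcal{T}_u$ from $\mathcal{B}_u^{0}$ intrinsically, namely as the weak-operator closure of $\mathcal{B}_u^{0}+(\mathcal{B}_u^{0})^{*}$. Since $\mathcal{B}_u^{0}+(\mathcal{B}_u^{0})^{*}=\{A^{u}_{g}:g\in H^\infty+\overline{H^\infty}\}$, and since for $g\in L^\infty$ the Fej\'er means of $g$ are uniformly bounded trigonometric polynomials (hence elements of $H^\infty+\overline{H^\infty}$) tending to $g$ weak-$*$, the identity $\langle A^{u}_{g_n}f,h\rangle=\int g_n f\overline h\,dm\to\int g f\overline h\,dm$ for $f,h\in\mathcal{K}_u$ places every $A^{u}_g$ with $g\in L^\infty$ in that weak closure; by Proposition~\ref{density-Bess} the weak closure is therefore all of $\mathcal{T}_u$. (When $u$ is a finite Blaschke product one may instead invoke the dimension count $\dim\big(\mathcal{B}_u^{0}+(\mathcal{B}_u^{0})^{*}\big)=2\deg u-1=\dim\mathcal{T}_u$.) Applying $U(\cdot)U^{*}$, a weak homeomorphism, gives $U\mathcal{T}_uU^{*}=\mathcal{T}_w$, so Theorem~\ref{most-three-SI} shows that $A\mapsto UAU^{*}$ equals either $\Lambda_b\Lambda_\psi$ or $\Lambda_b\Lambda_{\#}\Lambda_\psi$ for some $b\in\D$ and disk automorphism $\psi$.

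Now I would track the analytic algebra through these possibilities. By Propositions~\ref{sharp-B}, \ref{B-psi}, and \ref{delayed},
\[
\Lambda_b\Lambda_\psi(\mathcal{B}_u^{0})=\mathcal{B}_{(u\circ\psi)_b}^{\ell_b(0)},
\qquad
\Lambda_b\Lambda_{\#}\Lambda_\psi(\mathcal{B}_u^{0})=\mathcal{B}_{((u\circ\psi)^{\#})_b}^{\ell_b(\infty)}.
\]
Since the map $A\mapsto UAU^{*}$ has image $\mathcal{T}_w$, comparing codomains forces $\mathcal{T}_w=\mathcal{T}_{(u\circ\psi)_b}$ (respectively $\mathcal{T}_w=\mathcal{T}_{((u\circ\psi)^{\#})_b}$), hence $\mathcal{K}_w$ equals the corresponding model space, hence $(u\circ\psi)_b$ (respectively $((u\circ\psi)^{\#})_b$) is a unimodular multiple of $w$. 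Combining this with the identity $\mathcal{B}_{\zeta v}^{p}=\mathcal{B}_{v}^{p\overline\zeta}$ and the disjointness in Theorem~\ref{Sed-commutant}(iii), the equality of the displayed algebra with $\mathcal{B}_w^{0}$ forces the parameter to be $0$; but $\ell_b(\infty)\neq0$ for every $b\in\D$, which rules out the $\#$--possibility, while $\ell_b(0)=-b$ forces $b=0$ in the other. Therefore $U$ implements $\Lambda_\psi$, whence $\mathcal{K}_w=\mathcal{K}_{u\circ\psi}$, so $u\circ\psi=\zeta_1 w$ for a unimodular $\zeta_1$, i.e.\ $u=\zeta_1\,(w\circ\psi^{-1})$, as required.

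The step I expect to demand real care is the identification of $\mathcal{T}_u$ with the weak-operator closure of $\mathcal{B}_u^{0}+(\mathcal{B}_u^{0})^{*}$ --- in particular, verifying that a uniformly bounded weak-$*$ approximation of symbols really produces weakly convergent truncated Toeplitz operators --- since once a spatial isomorphism of Sedlock algebras is known to be a spatial isomorphism of the full $\mathcal{T}$ spaces, the remainder is bookkeeping with the cited propositions and a few M\"obius identities.
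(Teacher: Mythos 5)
Your proposal is correct and takes essentially the same route as the paper's proof: Crofoot reduction to the $\mathcal{B}^{0}$ algebras, recovery of the full $\mathcal{T}$ spaces as the weak closure of $\mathcal{B}^{0}+(\mathcal{B}^{0})^{*}$ via Ces\`aro (Fej\'er) means together with Proposition \ref{density-Bess}, and then Theorem \ref{most-three-SI}. The only difference is cosmetic: where the paper rules out the $\Lambda_{b}$ and $\Lambda_{\#}$ factors by noting qualitatively which basic isomorphisms preserve analytic (respectively co-analytic) truncated Toeplitz operators, you track the Sedlock parameter explicitly through Propositions \ref{sharp-B}, \ref{B-psi}, \ref{delayed} and conclude with Theorem \ref{Sed-commutant}(iii) --- equivalent bookkeeping.
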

	
	\begin{proof}
		$(\Leftarrow)$
		We first require the following two elementary identities:
		\begin{align} 
			b_{a} \circ b_c 
			&= \left(\tfrac{1 + a \overline{c}}{1 + \overline{a} c}\right) b_{\frac{a + c}{1 + a \overline{c}}}, \qquad a, c \in \D,\label{babc}\\[5pt]
			b_{a}(\zeta z) &= \zeta b_{a \overline{\zeta}}(z), \qquad\qquad a \in \D, \zeta \in \dD. \label{bazeta}
		\end{align}
		If $\Theta = b_{-a}(\zeta b_{a'}) \circ \Theta \circ \psi$, then $\Theta_a = \zeta \Theta_{a'} \circ \psi$ whence
		\begin{equation*}
			\K_{\Theta_{a}} = \K_{\zeta \Theta_{a'} \circ \psi} = \K_{\Theta_{a'} \circ \psi}.
		\end{equation*}
		By Proposition \ref{B-psi} the unitary operator
		\begin{equation*}
			U_{\psi}: \K_{\Theta_{a'}} \to \K_{\Theta_{a'} \circ \psi} = \K_{\Theta_{a}}, \quad U f := \sqrt{\psi'} (f \circ \psi)
		\end{equation*}
		induces a spatial isomorphism between $\B_{\Theta_{a}}^{0}$ and $\B_{\Theta_{a'}}^{0}$. 
		In light of \eqref{H/aH} we have $\B_{\Theta_{a}}^{0} \cong \B_{\Theta}^{a}$ and $\B_{\Theta_{a'}}^{0} \cong \B_{\Theta}^{a'}$ 
		from which we conclude that $\B_{\Theta}^{a} \cong \B_{\Theta}^{a'}$.\medskip
		
		\noindent$(\Rightarrow)$
		Conversely suppose that $\B_{\Theta}^{a} \cong \B_{\Theta}^{a'}$. Appealing to \eqref{H/aH} once more we see that
		$\B_{\Theta_{a}}^{0} \cong \B_{\Theta_{a'}}^{0}$.  Thus there exists a unitary operator 
		$U: \K_{\Theta_{a}} \to \K_{\Theta_{a'}}$ such that $\Lambda( \B_{\Theta_{a}}^{0})  = \B_{\Theta_{a'}}^{0}$, 
		where $\Lambda(A) = U A U^{*}$.   Taking conjugates and using the fact that 
		$(\B_{\Theta_{a}}^{0})^{*} = \B_{\Theta_{a}}^{\infty}$ we obtain
		$\Lambda (\B_{\Theta_{a}}^{\infty} ) = \B_{\Theta_{a'}}^{\infty}$. In particular, this implies that
		\begin{equation} \label{BBB}
			\Lambda(\B_{\Theta_{a}}^{0} + \B_{\Theta_{a}}^{\infty})  
			= \B_{\Theta_{a'}}^{0} + \B_{\Theta_{a'}}^{\infty}.
		\end{equation}
		
		We now remark that for any inner function $u$, the weak closure of 
		$\B_{u}^{0} + \B_{u}^{\infty}$ contains $\{A_{\phi}^{u}: \phi \in L^{\infty}\}$. 
		Indeed, it is clear from the definitions of $\B_{u}^{0}$ and $\B_{u}^{\infty}$ that
		\begin{equation*}
			\B_{u}^{0} + \B_{u}^{\infty} = \{A_{\phi}^{u}: \phi \in H^{\infty} + \overline{H^{\infty}}\}.
		\end{equation*}
		By approximating $\phi \in L^{\infty}$ weak-$*$ by its Cesaro means \cite[p.~20]{Hoffman}, we see that
		$L^{\infty}$ equals the weak-$*$ closure of $H^{\infty} + \overline{H^{\infty}}$.  Therefore
		the weak closure of $\B_{u}^{0} + \B_{u}^{\infty}$ contains $\{A_{\phi}^{u}: \phi \in L^{\infty}\}$ which is 
		dense in $\mathcal{T}_{u}$ (Proposition \ref{density-Bess}). 
				Based upon the discussion in the previous paragraph and \eqref{BBB}, we conclude that
		\begin{equation*}
			\Lambda(\mathcal{T}_{\Theta_a}) = \mathcal{T}_{\Theta_{a'}}.
		\end{equation*}
	
		Theorem \ref{most-three-SI} now implies that $\Lambda$ is a product of at 
		most three spatial isomorphisms from the families
		$\Lambda_a, \Lambda_{\#}, \Lambda_{\phi}$ such that no two are of the same type. 
		Next observe that
		\begin{enumerate}\addtolength{\itemsep}{0.5\baselineskip}
			\item From \eqref{form-Jphi} we see that $\Lambda_{\psi}$ preserves analytic truncated Toeplitz operators,
			\item From \eqref{form-Jsharp} we see that $\Lambda_{\#}$ takes analytic truncated Toeplitz operators to co-analytic ones,
			\item The Crofoot transforms $\Lambda_a$ preserve neither analytic nor co-analytic truncated Toeplitz operators.
		\end{enumerate}
		Since $\Lambda(\B^{0}_{\Theta_a}) = \B^{0}_{\Theta_{a'}}$, it follows that $\Lambda = \Lambda_{\psi}$. Thus
		\begin{equation*}
			\B^{0}_{\Theta_{a'}} = \Lambda(\B^{0}_{\Theta_a}) = \B^{0}_{\zeta \Theta_a \circ \psi}.
		\end{equation*}
		Note that we must allow for the possibility of a unimodular constant $\zeta$ since the corresponding 
		Sedlock algebra does not change.  Thus  $\Theta_{a'} = \zeta \Theta_a \circ \psi$, as claimed.
	\end{proof}

	Using Theorem \ref{t-main-Sedlock} along with \eqref{sharp} yields the following corollary.

	\begin{Corollary} \label{c-ext-Sedlock-main}
		If $\Theta$ is an inner function and $a, a' \in \widehat{\C} \setminus \D^{-}$, then $\B_{\Theta}^{a} \cong \B_{\Theta}^{a'}$
		if and only if there is a unimodular constant $\zeta$ and a disk automorphism $\psi$ such that
		\begin{equation} \label{xx-ww}
			\Theta^{\#} = b_{-1/a}(\zeta b_{1/a'}) \circ \Theta^{\#} \circ \psi.
		\end{equation}
		If $a \in \D$ and $a' \in \widehat{\C} \setminus \D^{-}$, \eqref{xx-ww} is replaced by  
		\begin{equation*}
		\Theta = b_{-a} (\zeta b_{1/a'}) \circ \Theta^{\#} \circ \psi.
		\end{equation*}
	\end{Corollary}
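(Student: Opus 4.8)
The plan is to bootstrap from Theorem~\ref{t-main-Sedlock}, which already handles $a,a'\in\D$, by pushing everything through the sharp involution. The key observation is that $\Lambda_{\#}$ is a \emph{fixed} spatial isomorphism $\T_{\Theta}\to\T_{\Theta^{\#}}$ with $\Lambda_{\#}(\B^{c}_{\Theta})=\B^{1/c}_{\Theta^{\#}}$ for all $c\in\widehat{\C}$ (Proposition~\ref{sharp-B}); conjugating an arbitrary implementing unitary by the unitary $U_{\#}$ of \eqref{J-sharp-defn} then shows that, for all $c,c'\in\widehat{\C}$,
\begin{equation*}
\B^{c}_{\Theta}\cong\B^{c'}_{\Theta}\quad\Leftrightarrow\quad\B^{1/c}_{\Theta^{\#}}\cong\B^{1/c'}_{\Theta^{\#}}.
\end{equation*}
Since $c\mapsto 1/c$ maps $\widehat{\C}\setminus\D^{-}$ bijectively onto $\D$ (with $\infty\mapsto 0$), this is exactly the leverage needed.

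For the first assertion I would take $a,a'\in\widehat{\C}\setminus\D^{-}$, so $1/a,1/a'\in\D$, and apply the displayed equivalence followed by Theorem~\ref{t-main-Sedlock} to the inner function $\Theta^{\#}$ with parameters $1/a,1/a'$. This gives $\B^{a}_{\Theta}\cong\B^{a'}_{\Theta}$ if and only if $\Theta^{\#}=b_{-1/a}(\zeta b_{1/a'})\circ\Theta^{\#}\circ\psi$ for some unimodular $\zeta$ and some disk automorphism $\psi$, which is precisely \eqref{xx-ww}; both directions come for free because Theorem~\ref{t-main-Sedlock} is an equivalence, and the degenerate cases $a=\infty$ or $a'=\infty$ (one of $b_{-1/a}$, $b_{1/a'}$ being the identity) are subsumed.

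For the mixed case $a\in\D$, $a'\in\widehat{\C}\setminus\D^{-}$, I would combine \eqref{sharp} with \eqref{H/aH}. From \eqref{H/aH} we get $\B^{a}_{\Theta}\cong\B^{0}_{\Theta_{a}}$ and $\B^{1/a'}_{\Theta^{\#}}\cong\B^{0}_{(\Theta^{\#})_{1/a'}}$ (note $1/a'\in\D$), while $\Lambda_{\#}$ gives $\B^{a'}_{\Theta}\cong\B^{1/a'}_{\Theta^{\#}}$, so $\B^{a}_{\Theta}\cong\B^{a'}_{\Theta}$ if and only if $\B^{0}_{u}\cong\B^{0}_{v}$ with $u=\Theta_{a}$ and $v=(\Theta^{\#})_{1/a'}$. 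These are algebras of \emph{analytic} truncated Toeplitz operators over two a priori different inner functions, so I would re-run the $(\Rightarrow)$ argument from the proof of Theorem~\ref{t-main-Sedlock}: an implementing $\Lambda$ with $\Lambda(\B^{0}_{u})=\B^{0}_{v}$ also satisfies $\Lambda(\B^{\infty}_{u})=\B^{\infty}_{v}$ by taking adjoints, hence carries the weak closure of $\B^{0}_{u}+\B^{\infty}_{u}=\{A^{u}_{\phi}:\phi\in H^{\infty}+\overline{H^{\infty}}\}$ onto that of $\B^{0}_{v}+\B^{\infty}_{v}$, hence (Proposition~\ref{density-Bess}) carries $\T_{u}$ onto $\T_{v}$; by Theorem~\ref{most-three-SI} it is a product of a Crofoot transform, a $\Lambda_{\psi}$, and a $\Lambda_{\#}$, and since it preserves analytic truncated Toeplitz operators it must be a single $\Lambda_{\psi}$, so $v=\zeta u\circ\psi$. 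Conversely $v=\zeta u\circ\psi$ forces $\K_{u}=\K_{v\circ\psi^{-1}}$, whence $\B^{0}_{u}\cong\B^{0}_{v}$ via $U_{\psi^{-1}}$ (Proposition~\ref{B-psi}). Finally I would rewrite $v=\zeta u\circ\psi$, i.e.\ $b_{1/a'}\circ\Theta^{\#}=\zeta\,(b_{a}\circ\Theta)\circ\psi$, by applying $b_{a}^{-1}=b_{-a}$ and relabelling $\psi\mapsto\psi^{-1}$, $\zeta\mapsto\overline{\zeta}$, obtaining $\Theta=b_{-a}(\zeta b_{1/a'})\circ\Theta^{\#}\circ\psi$, as claimed.

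I expect the only real friction to be in the mixed case: the two algebras being compared do not naturally live over a common inner function, so I cannot cite Theorem~\ref{t-main-Sedlock} verbatim and must instead use the slightly more general statement hidden in its proof, namely that $\B^{0}_{u}\cong\B^{0}_{v}$ forces $v=\zeta u\circ\psi$ for arbitrary inner $u,v$. Everything else is bookkeeping: tracking the Möbius parameters so that $1/a,1/a'$ land inside $\D$ exactly when $a,a'$ lie outside $\D^{-}$, and carrying out the elementary composition algebra (aided if desired by the identities \eqref{babc}--\eqref{bazeta}) that converts $v=\zeta u\circ\psi$ into the stated form.
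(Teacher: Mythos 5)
Your proposal is correct and follows essentially the paper's route: the paper disposes of this corollary in one line by combining Proposition \ref{sharp-B} (i.e., \eqref{sharp}) with Theorem \ref{t-main-Sedlock}, exactly as you do for $a, a' \in \widehat{\C} \setminus \D^{-}$ via conjugation by $U_{\#}$. In the mixed case you explicitly re-run the $(\Rightarrow)$ argument of Theorem \ref{t-main-Sedlock} for the two different inner functions $u = \Theta_{a}$ and $v = (\Theta^{\#})_{1/a'}$ (adjoints, weak density via Proposition \ref{density-Bess}, Theorem \ref{most-three-SI}, then $\Lambda = \Lambda_{\psi}$); this is precisely the detail the paper leaves implicit and later codifies as Theorem \ref{main-IJ}, so your write-up is, if anything, more complete than the paper's one-sentence proof.
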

	
\begin{Remark} \label{r-never-s-i}
We have examined when $\mathcal{B}_{\Theta}^{a} \cong \mathcal{B}_{\Theta}^{a'}$ in the case $a, a' \in \dD$ (Theorem \ref{Clark}), the case $a, a' \in \D$ (Theorem \ref{t-main-Sedlock}), the case $a, a' \in \widehat{\C} \setminus \D^{-}$, and the case $a \in \D, a' \in \widehat{\C} \setminus \D^{-}$ (Corollary \ref{c-ext-Sedlock-main}). The reader might be wondering when $\mathcal{B}_{\Theta}^{a} \cong \mathcal{B}_{\Theta}^{a'}$ in the case where $a \in \dD, a' \in \widehat{\C} \setminus \dD$. Recall from Remark \ref{normal-ops} that when $a \in \dD$, $\mathcal{B}_{\Theta}^{a}$ is an algebra of normal operators while $\mathcal{B}_{\Theta}^{a'}$, for $a \in \widehat{\C} \setminus \dD$, contains no normal operators (other than scalar multiplies of the identity). So in this situation, $\mathcal{B}_{\Theta}^{a}$, $a \in \dD$, is never spatially isomorphic to $\mathcal{B}_{\Theta}^{a'}$, $a' \in \widehat{\C} \setminus \dD$. 
\end{Remark}

	Corollary \ref{c-ext-Sedlock-main} says that when $a = 0$ and $a'  = \infty$ we have $\mathcal{B}_{\Theta}^0 \cong \mathcal{B}_{\Theta}^{\infty}$ if and only if  $\Theta = \zeta \Theta^{\#}(\psi)$. We now describe a situation when this occurs.

\begin{Corollary} \label{same-arg}
Suppose $\Theta$ is a Blaschke product whose zeros all have the same argument. Then $\mathcal{B}_{\Theta}^{0} \cong \mathcal{B}_{\Theta}^{\infty}$. 
\end{Corollary}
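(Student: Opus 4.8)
\medskip

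The plan is to invoke Corollary \ref{c-ext-Sedlock-main} in the case $a = 0$, $a' = \infty$. As explained in the paragraph immediately preceding the present corollary, in this case the condition $b_{-a}(\zeta b_{1/a'}) \circ \Theta^{\#} \circ \psi = \Theta$ collapses (since $b_{0}$ is the identity map) to the following: there exist a unimodular constant $\zeta \in \dD$ and a disk automorphism $\psi$ with $\Theta = \zeta\, \Theta^{\#} \circ \psi$. So the entire problem reduces to producing such a $\zeta$ and $\psi$ from the hypothesis on the zeros of $\Theta$.

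First I would pin down $\Theta^{\#}$ as a Blaschke product. Writing $b_{\lambda}(z) = \frac{z - \lambda}{1 - \overline{\lambda} z}$ as in \eqref{eq-Factor}, a one-line computation gives $\overline{b_{\lambda}(\overline{z})} = b_{\overline{\lambda}}(z)$. Hence, if $\Theta$ is the Blaschke product with zero sequence $\{\lambda_j\}$ (listed according to multiplicity), then $\Theta^{\#}(z) = \overline{\Theta(\overline{z})}$ is a Blaschke product with zero sequence $\{\overline{\lambda_j}\}$: the Blaschke summability condition $\sum_j (1 - |\lambda_j|) < \infty$ is untouched because $|\overline{\lambda_j}| = |\lambda_j|$, and any two Blaschke products sharing a zero sequence differ only by a unimodular constant.

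Next I would use the hypothesis: all nonzero zeros of $\Theta$ have a common argument $\alpha$, so $\lambda_j = r_j e^{i\alpha}$ with $r_j \in [0,1)$ (a zero at the origin causes no difficulty, since $\overline{0} = 0$). Set $\psi(z) := e^{-2i\alpha} z$, which is a disk automorphism. Then $\Theta^{\#} \circ \psi$ is again a Blaschke product, and its zeros are the points $z$ with $\psi(z) = \overline{\lambda_j}$, namely $z = e^{2i\alpha}\overline{\lambda_j} = e^{2i\alpha} r_j e^{-i\alpha} = r_j e^{i\alpha} = \lambda_j$, with the same multiplicities. Therefore $\Theta^{\#} \circ \psi$ and $\Theta$ have the same zero sequence, and so $\Theta = \zeta\, \Theta^{\#} \circ \psi$ for some $\zeta \in \dD$. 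By the reduction of the first paragraph, this gives $\B_{\Theta}^{0} \cong \B_{\Theta}^{\infty}$.

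I do not anticipate any real obstacle here once the reduction to $\Theta = \zeta\, \Theta^{\#} \circ \psi$ is in hand; the only points meriting a moment of attention are the elementary identity $\overline{b_{\lambda}(\overline{z})} = b_{\overline{\lambda}}(z)$, the invariance of the Blaschke convergence condition under conjugation of the zeros (so that the construction applies verbatim to infinite Blaschke products, not merely finite ones), and the harmless indeterminacy of the ``argument'' of a zero located at $0$.
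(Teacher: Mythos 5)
Your argument is correct and is essentially the paper's own proof: the paper also produces the rotation (there written as $\Theta(vz)=\zeta\Theta^{\#}(\overline{v}z)$, so $\Theta=\zeta\Theta^{\#}(\overline{v}^{2}z)$ with $\overline{v}^{2}=e^{-2i\alpha}$, exactly your $\psi$) by comparing zero sequences of Blaschke products, and then invokes Corollary \ref{c-ext-Sedlock-main}. Your extra remarks on the identity $\overline{b_{\lambda}(\overline{z})}=b_{\overline{\lambda}}(z)$, infinite Blaschke products, and a possible zero at the origin are fine but do not change the substance.
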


\begin{proof}
Since the zeros of $\Theta$ have the same argument, there is a unimodular $v$ so that the zeros of $\Theta(v z)$ are real. This means that the Blaschke products $\Theta(v z)$ and $\Theta^{\#}(\overline{v} z)$ have the same zeros and so $\Theta(v z) = \zeta \Theta^{\#}(\overline{v} z)$ for some unimodular $\zeta$. Thus $\Theta(z) = \zeta \Theta^{\#}(\overline{v}^2 z)$.  The result now follows from Corollary \ref{c-ext-Sedlock-main}.
\end{proof}

\subsection{Toeplitz matrices}	
	For specific inner functions $\Theta$, one can obtain more precise results.  For instance, if $\Theta = z^n$
	we can prove the following.

	\begin{Corollary} \label{BP-zn}
		For $a \in \D$ and $n \geq 2$, we have $\mathcal{B}_{z^n}^{a} \cong \mathcal{B}_{z^{n}}^{a'}$ if and only if $|a| = |a'|$.
	\end{Corollary}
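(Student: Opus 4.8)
The plan is to specialize Theorem~\ref{t-main-Sedlock} to $\Theta = z^n$ and then reduce to an elementary fact about finite Blaschke products. By Theorem~\ref{t-main-Sedlock}, the relation $\mathcal{B}_{z^n}^{a} \cong \mathcal{B}_{z^n}^{a'}$ holds if and only if there exist a unimodular constant $\zeta$ and a disk automorphism $\psi$ with
\begin{equation*}
	z^n = b_{-a}(\zeta b_{a'}) \circ z^n \circ \psi .
\end{equation*}
Setting $\eta := b_{-a} \circ (\zeta b_{a'})$, which is again a disk automorphism of $\D$, this condition is equivalent to the identity of analytic functions $\eta^{-1}(z^n) = \psi(z)^n$ on $\D$.

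For the direction $(\Rightarrow)$ I would compare the zero sets of the two sides of $\eta^{-1}(z^n) = \psi(z)^n$. The right-hand side vanishes only at $\psi^{-1}(0)$, and there to order $n$. The left-hand side vanishes exactly at the solutions in $\D$ of $z^n = \eta(0)$; since $\eta(0) \in \D$, this is either $n$ distinct simple zeros, when $\eta(0) \neq 0$, or a single zero of order $n$ at the origin, when $\eta(0) = 0$. For $n \geq 2$ these two possibilities are mutually exclusive, so we must have $\eta(0) = 0$. A short M\"obius computation gives $\eta(0) = b_{-a}(-\zeta a')$, which vanishes precisely when $a = \zeta a'$; hence $|a| = |\zeta a'| = |a'|$.

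For the direction $(\Leftarrow)$, if $|a| = |a'| = 0$ there is nothing to prove, so assume $a' \neq 0$ and put $\zeta := a/a' \in \dD$. Using $|a|^2 = |a'|^2$ (equivalently $\overline{a}\,\zeta = \overline{a'}$), a direct computation shows that $\eta = b_{-a} \circ (\zeta b_{a'})$ collapses to the rotation $w \mapsto \zeta w$. Choosing any unimodular $n$-th root $\mu$ of $\overline{\zeta}$ and letting $\psi(z) := \mu z$, one gets $\psi(z)^n = \overline{\zeta}\, z^n = \eta^{-1}(z^n)$, so the condition of Theorem~\ref{t-main-Sedlock} is met and therefore $\mathcal{B}_{z^n}^{a} \cong \mathcal{B}_{z^n}^{a'}$.

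The only real obstacle is the forward step: one must recognize that the invariant governing Theorem~\ref{t-main-Sedlock} for $\Theta=z^n$ is the multiplicity pattern of the zeros of the degree-$n$ Blaschke products $\eta^{-1}(z^n)$ and $\psi(z)^n$, and that $z^n$ is exactly the degree-$n$ case whose zeros all collapse to a single point — which is precisely where $n \geq 2$ is needed, since for $n=1$ every composition of automorphisms is itself an automorphism and all the algebras coincide. The two remaining computations (evaluating $\eta(0)$, and checking that $\eta$ is a rotation when $|a| = |a'|$) are routine algebra with M\"obius maps.
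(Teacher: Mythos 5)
Your proof is correct, and it follows the paper's overall strategy (reduce via Theorem \ref{t-main-Sedlock} to the functional equation $z^n = b_{-a}(\zeta b_{a'}) \circ z^n \circ \psi$ and exploit the fact that $z^n$ has a single zero of multiplicity $n \geq 2$), but your treatment of the forward direction is organized differently. The paper proceeds through two lemmas: its Fact 1 shows, by differentiating $\phi \circ z^n = z^n \circ \psi$ at the zero of $\psi$, that both automorphisms must be rotations, and its Fact 2 shows that $b_a \circ b_c$ can only be a rotation when $a = -c$; it then needs some bookkeeping with $B(uz) = u^n B(z)$ and unimodular constants to extract $a' = a\overline{w}$. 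You instead rewrite the condition as $\eta^{-1}(z^n) = \psi(z)^n$ with $\eta = b_{-a}\circ(\zeta b_{a'})$ and compare zero sets: the right side has one zero of order $n$, while the left side has $n$ distinct simple zeros unless $\eta(0)=0$, so $\eta(0) = b_{-a}(-\zeta a') = 0$ forces $a = \zeta a'$ and hence $|a|=|a'|$ in one line. This bypasses Fact 2 and the rotation manipulations entirely (at the cost of not recording the extra information, implicit in the paper's Fact 1, that $\psi$ itself must be a rotation, which the corollary does not need). Your converse direction --- checking that $b_{-a}\circ(\zeta b_{a'})$ is the rotation $w \mapsto \zeta w$ when $\zeta = a/a'$ and then precomposing with $z \mapsto \mu z$, $\mu^n = \overline{\zeta}$ --- is essentially the same identity the paper writes as $z^n = b_{-a}(\overline{\zeta} b_{\zeta a}) \circ z^n \circ (\zeta^{1/n} z)$, so both approaches coincide there.
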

	
	\begin{proof}
		The implication $(\Leftarrow)$ follows immediately from the identity
		\begin{equation*}
			z^n = b_{-a}(\overline{\zeta} b_{\zeta a}) \circ z^n \circ (\zeta^{1/n} z).
		\end{equation*}
		and Theorem \ref{t-main-Sedlock}.  For the $(\Rightarrow)$ implication, we start with the following two facts.\medskip
			
		\noindent\textsc{Fact 1}: If $\phi$ and $\psi$ are disk automorphisms which satisfy
		\begin{equation} \label{phicpsi}
			\phi \circ z^n = z^n \circ \psi,
		\end{equation}
		then $\phi$ and $\psi$ are both rotations. To see this, observe that if $\psi(c) = 0$, then taking the derivative 
		of \eqref{phicpsi} and evaluating at $c$ yields
		\begin{equation*}
			0 = n \psi(c)^{n - 1} \psi'(c)= \phi'(c^n) n c^{n - 1}
		\end{equation*}
		whence $c = 0$, implying that $\psi$ is a rotation.   Evaluating both sides of \eqref{phicpsi} 
		at $c = 0$ reveals that $\phi$ is also a rotation.\medskip
		
		\noindent\textsc{Fact 2}: If $a, c \in \D$ and $b_a \circ b_c$ is a rotation, then $a = -c$. To see this use \eqref{babc}.\medskip
	
		With these two facts in hand, we are ready to complete the proof. Suppose that $a, a' \in \D$ and 
		$\B_{\Theta}^{a} \cong \B_{\Theta}^{a'}$.  By Theorem \ref{t-main-Sedlock} and Fact 1, there exist unimodular $u,w$ such that
		\begin{equation*}
			B(z) = b_{-a} \circ w b_{a'} \circ B(uz),
		\end{equation*}
		where $B(z) = z^n$. Now use the fact that $B(u z) = u^n B(z)$ along with \eqref{phicpsi} to see that
		\begin{equation*}
			B = b_{-a} \circ w u^n b_{a'\overline{u}^n} \circ B = w u^n b_{-a \overline{w}\overline{u}^n} \circ b_{a'\overline{u}^{n}} \circ B.
		\end{equation*}
		By Fact 1, the automorphism pre-composing $B$ is a rotation.  Fact 2 now implies that 		
		$a \overline{w} \overline{u}^n = a'\overline{u}^n$ and hence $a' = a \overline{w}$.  In particular, this implies that $|a| = |a'|$.
	\end{proof}

	\begin{Corollary}
		Suppose that $a, a' \in \C \cup \{\infty\}$.
		\begin{enumerate}\addtolength{\itemsep}{0.5\baselineskip}
			\item If $a, a' \in \D$, then $\B_{z^n}^{a} \cong \B_{z^n}^{a'} \Leftrightarrow |a| = |a'|$.
			\item If $a, a' \in \widehat{\C} \setminus \D^{-}$, then $\B_{z^n}^{a} \cong \B_{z^n}^{a'} \Leftrightarrow |a| = |a'|$.
			\item If $0 < |a| < 1$ and $|a'| > 1$, then $\B_{z^n}^{a} \cong \B_{z^n}^{a'} \Leftrightarrow |a a'| = 1$.
			\item If $a, a' \in \dD$, then $\B_{z^n}^{a} \cong \B_{z^n}^{a'}$.
			\item $\B_{z^n}^{0} \cong \B_{z^n}^{\infty}$.
		\end{enumerate}
	\end{Corollary}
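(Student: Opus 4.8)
The plan is to reduce the whole corollary to two facts already in hand—Corollary~\ref{BP-zn} and Corollary~\ref{nothingtoprove}—together with the single structural remark that $(z^n)^{\#} = z^n$. Part (i) is \emph{exactly} Corollary~\ref{BP-zn}, so nothing new is required there; it is also the only genuinely substantive assertion, since it rests on the structure theorem (Theorem~\ref{t-main-Sedlock}) and the elementary fact that a disk automorphism which commutes through $z^n$ with $z\mapsto z^n$ must be a rotation. (For $n=1$ the space $\mathcal{K}_z$ is one dimensional and every Sedlock algebra is $\C I$, so everything is trivial; we take $n\ge 2$.) Part (iv) is immediate from Corollary~\ref{nothingtoprove}, because $z^n$ is a finite Blaschke product.

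For the remaining parts I would exploit the sharp symmetry. Since $\Theta^{\#}(z)=\overline{\Theta(\overline z)}$ gives $(z^n)^{\#}=z^n$, the map $\Lambda_{\#}$ of \eqref{Lsharp} is a spatial isomorphism of $\mathcal{T}_{z^n}$ \emph{onto itself}, and Proposition~\ref{sharp-B} yields $\Lambda_{\#}(\mathcal{B}_{z^n}^{a})=\mathcal{B}_{z^n}^{1/a}$. Hence $\mathcal{B}_{z^n}^{a}\cong\mathcal{B}_{z^n}^{1/a}$ for every $a\in\widehat{\C}$, with the conventions $1/0=\infty$ and $1/\infty=0$. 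Part (v) is the special case $a=0$.

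Parts (ii) and (iii) then follow by composing this reflection with part (i) and using that spatial isomorphism is an equivalence relation. For (ii): for $a,a'\in\widehat{\C}\setminus\D^{-}$ one has $1/a,1/a'\in\D$, so $\mathcal{B}_{z^n}^{a}\cong\mathcal{B}_{z^n}^{1/a}$ and $\mathcal{B}_{z^n}^{a'}\cong\mathcal{B}_{z^n}^{1/a'}$; by part (i) these are mutually isomorphic precisely when $|1/a|=|1/a'|$, i.e. precisely when $|a|=|a'|$ (reading $|\infty|=\infty$, so that equality of moduli forces $a'=\infty$ when $a=\infty$). For (iii): if $0<|a|<1$ and $|a'|>1$ then $1/a'\in\D\setminus\{0\}$ and $\mathcal{B}_{z^n}^{a'}\cong\mathcal{B}_{z^n}^{1/a'}$, so part (i) gives $\mathcal{B}_{z^n}^{a}\cong\mathcal{B}_{z^n}^{1/a'}$ iff $|a|=|1/a'|$, that is, iff $|aa'|=1$.

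The main obstacle is already behind us: it is part (i), Corollary~\ref{BP-zn}. What remains is bookkeeping, and the only care needed is at the degenerate endpoints—checking that $\mathcal{B}_{z^n}^{a}\cong\mathcal{B}_{z^n}^{1/a}$ is consistent at $a\in\{0,\infty\}$ and that the equality $|a|=|a'|$ is read correctly when a parameter is $\infty$.
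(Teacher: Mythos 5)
Your argument is correct and is essentially the paper's own proof: the paper simply combines Corollary \ref{BP-zn}, Corollary \ref{nothingtoprove}, and the relation \eqref{sharp} (which, since $(z^n)^{\#}=z^n$, gives $\mathcal{B}_{z^n}^{a}\cong\mathcal{B}_{z^n}^{1/a}$), exactly as you do. Your added care about $n\ge 2$ and the conventions at $0$ and $\infty$ is fine bookkeeping but introduces nothing beyond the paper's route.
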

	
	\begin{proof}
		Use the previous several results along with \eqref{sharp}.
	\end{proof}

\subsection{The atomic inner function}

	The opposite extreme to Corollary \ref{BP-zn} occurs with the singular atomic inner function.
	
	\begin{Theorem} \label{t-atomic-basic}
		If $\Theta$ denotes the atomic inner function
		\begin{equation} \label{atomic-basic}
			\Theta(z) = \exp\left(-\frac{1 + z}{1 - z}\right),
		\end{equation}
		then, for $a, a' \in \D$, we have $\B_{\Theta}^{a} \cong \B_{\Theta}^{a'} \Leftrightarrow a = a'$.
	\end{Theorem}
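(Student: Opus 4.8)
The plan is to push the problem through Theorem~\ref{t-main-Sedlock} and then exploit the fact that the atomic inner function has a single singularity on $\dD$ with a completely explicit form there. The implication $a = a' \Rightarrow \B_\Theta^a = \B_\Theta^{a'}$ is trivial, so I would assume a spatial isomorphism exists and, by Theorem~\ref{t-main-Sedlock}, fix a unimodular $\zeta$ and a disk automorphism $\psi$ with $\Theta = b_{-a}(\zeta b_{a'})\circ\Theta\circ\psi$; as in the proof of that theorem this is the same as $\Theta_a = \zeta\,\Theta_{a'}\circ\psi$, where $\Theta_c := b_c\circ\Theta$. To locate $\psi$: since $\Theta$ continues holomorphically across $\dD\setminus\{1\}$ and has an essential singularity at $z=1$ — and the same holds for $\Theta_a$ and $\Theta_{a'}$, as $b_c$ is holomorphic near $\overline{\D}$ for $c\in\D$ — the conformal invariance of the boundary–singularity set forces $\psi(1)=1$.

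Next I would transport everything to the right half–plane through the Cayley map $\tau(z)=(1+z)/(1-z)$, a conformal bijection of $\D$ onto $\Pi=\{\Re w>0\}$ with $\tau(1)=\infty$, under which $\Theta(z)=e^{-\tau(z)}$ and $\Theta(\D)=\D\setminus\{0\}$. A disk automorphism fixing $1$ corresponds to an automorphism of $\Pi$ fixing $\infty$, i.e.\ an affine map $w\mapsto \alpha w + is$ with $\alpha>0$, $s\in\R$, so $\Theta\circ\psi = e^{-is}(e^{-\tau})^{\alpha}$. The zeros of $\Theta_c$ in $\D$ are exactly the $\tau$–preimages of the arithmetic progression $\log\frac{1}{|c|}-i\arg c+2\pi i\Z\subset\Pi$, and they are simple because $\Theta'=-\tfrac{2}{(1-z)^2}\Theta$ is zero–free; moreover $\Theta_c\to -c\neq 0$ nontangentially at $z=1$, so $\Theta_c$ has no singular inner factor and is, up to a unimodular constant, the Blaschke product on that zero set. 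Hence $\Theta_a=\zeta\,\Theta_{a'}\circ\psi$ forces $\psi$ to carry the zero set of $\Theta_a$ onto that of $\Theta_{a'}$, i.e.\ in $\tau$–coordinates $w\mapsto \alpha w+is$ must carry one arithmetic progression onto the other. Comparing common differences gives $\alpha=1$, comparing real parts gives $|a|=|a'|$, and the offset gives $\arg a\equiv \arg a' + s\pmod{2\pi}$.

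With $\alpha=1$ we have $\Theta\circ\psi=e^{-is}\Theta$, so by \eqref{bazeta} $\Theta_{a'}\circ\psi=b_{a'}(e^{-is}\Theta)=e^{-is}\Theta_{a'e^{is}}$, and the functional equation collapses to the identity of M\"obius maps $b_a=\zeta e^{-is}b_{a'e^{is}}$ on $\D$ (legitimate since $\Theta(\D)=\D\setminus\{0\}$), which yields $a=a'e^{is}$ and $\zeta=e^{is}$. The remaining, and in my view hardest, step is to rule out nontrivial $s$ — equivalently, to show that the only disk automorphism that can implement the isomorphism is the identity, whence $a=a'$. This is precisely the point at which the atomic inner function must behave differently from $\Theta=z^n$, where the corresponding analysis (Facts 1 and 2 in the proof of Corollary~\ref{BP-zn}) leaves an entire circle of rotations and yields only $|a|=|a'|$; I would try to close the gap by showing that a nontrivial parabolic automorphism fixing $1$ cannot conjugate the Blaschke product $\Theta_a$ to a unimodular multiple of $\Theta_{a'}$, using more than the bare zero sets — e.g.\ the precise arrangement of the zeros along the horocycles at $1$. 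The degenerate case is immediate: $\Theta_0=\Theta$ is zero–free while $\Theta_{a'}$ has infinitely many zeros when $a'\neq 0$, so $\B_\Theta^0\cong\B_\Theta^{a'}$ forces $a'=0$.
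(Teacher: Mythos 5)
Your argument tracks the paper's own strategy quite closely up to the final step: invoke Theorem \ref{t-main-Sedlock}, rewrite the condition as $\Theta_a=\zeta\,\Theta_{a'}\circ\psi$, force $\psi(1)=1$, pass to the half plane, and deduce $\alpha=1$, $|a|=|a'|$, $a=a'e^{is}$, $\zeta=e^{is}$; all of those computations are correct. The genuine gap is exactly the step you flag yourself: you never rule out a nontrivial parabolic $\psi$ fixing $1$ (i.e.\ $s\neq 0$), and this gap cannot be closed, because the statement you hope to prove there is false. Writing $\tau(z)=\frac{1+z}{1-z}$, the map $\psi_s:=\tau^{-1}\circ(w\mapsto w+is)\circ\tau$, explicitly $\psi_s(z)=\frac{(2-is)z+is}{(2+is)-isz}$, is a disk automorphism fixing $1$, and it satisfies $\Theta\circ\psi_s=e^{-is}\Theta$ \emph{identically}, since $\Theta=e^{-\tau}$ and $\tau\circ\psi_s=\tau+is$. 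Hence, by \eqref{bazeta}, for any $a'\in\D$ and $a=e^{is}a'$ one gets $\Theta_{a'}\circ\psi_s=b_{a'}(e^{-is}\Theta)=e^{-is}b_{a}(\Theta)=e^{-is}\Theta_a$, i.e.\ $\Theta_a=\zeta\,\Theta_{a'}\circ\psi_s$ with $\zeta=e^{is}$. So the hypothesis of Theorem \ref{t-main-Sedlock} is satisfied, and its (easy) sufficiency direction, via \eqref{H/aH} and $U_{\psi_s}$, yields $\B_{\Theta}^{a}\cong\B_{\Theta}^{a'}$ whenever $|a|=|a'|$. In particular, the ``finer horocyclic structure'' of the zeros offers no obstruction: $\psi_s$ carries the zero set of $\Theta_a$ exactly onto that of $\Theta_{a'}$, and in fact conjugates the two Blaschke products up to the unimodular constant $e^{is}$.

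Consequently, what your (correct) computations actually establish is the equivalence $\B_{\Theta}^{a}\cong\B_{\Theta}^{a'}\Leftrightarrow|a|=|a'|$ for $a,a'\in\D$ (with the degenerate case $a=0$ or $a'=0$ handled by zero counting, as you note), not the equality $a=a'$ asserted in the statement. You should also be aware that the point where you got stuck is precisely where the paper's own proof goes wrong: after reducing to $\Theta=\zeta(\Theta\circ\psi)$, the paper asserts that $-\frac{1+z}{1-z}+\frac{1+\psi(z)}{1-\psi(z)}$ is constant only when $\psi$ is the identity, but for $\psi=\psi_s$ this expression is identically the nonzero constant $is$; the one-parameter parabolic group fixing $1$ is overlooked. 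So your instinct that ruling out nontrivial $s$ is the crux was sound, but no argument can force $s=0$; the strongest conclusion available by this route (and, granting Theorem \ref{t-main-Sedlock}, the true one) is $|a|=|a'|$, which is consistent with the paper's subsequent corollary that for singular inner functions $\B_{\Theta}^{a}\cong\B_{\Theta}^{a'}$ implies $|a|=|a'|$.
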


\begin{proof}
	We first note that if $|\zeta| = 1$, then by \eqref{babc} and \eqref{bazeta} we get
	\begin{equation} \label{b-comp}
		b_{-a}(\zeta b_{a'})(z) = \frac{\zeta - a \overline{a'}}{1 - \overline{a} a' \zeta} 
		\left(\frac{z - (\frac{\zeta a' - a}{\zeta - a \overline{a'}})}{1 - (\frac{\overline{a'} - \overline{a} \zeta}{1 - \overline{a} a'\zeta}) z}\right).
	\end{equation}
	If $\B_{\Theta}^{a} \cong \B_{\Theta}^{a'}$, then  by Theorem \ref{t-main-Sedlock} there exists a 
	$\zeta \in \dD$ and an automorphism $\psi$ such that 
	\begin{equation} \label{bb-comp}
		\Theta = b_{-a}(\zeta b_{a'}) \circ \Theta \circ \psi.
	\end{equation}
	We will first argue that $a = \zeta a'$.  If this were not the case, then by \eqref{b-comp} the map 
	$b_{-a}(\zeta b_{a'}) \circ \Theta \circ \psi$ will have a zero in $\D$ (since $\Theta \circ \psi$ maps $\D$
	onto $\D \setminus \{0\}$) which cannot happen by \eqref{bb-comp} and because $\Theta$ has no zeros in $\D$.

	Having shown that $a = \zeta a'$, we now claim that $\zeta = 1$. 
	To do this we observe by using \eqref{b-comp} and \eqref{bb-comp} again that $\Theta = \zeta (\Theta \circ \psi)$.
	Writing
	\begin{equation*}
	\psi(z) = \lambda \frac{z-a}{1 - \overline{a}z}
	\end{equation*}
	we find
	\begin{equation*}
		\frac{ \Theta(z) }{ \Theta( \psi(z) ) } = \exp\left( - \frac{1+z}{1-z} + \frac{1 + \psi(z) }{1 - \psi(z) } \right).
	\end{equation*}
	A little algebra reveals that
	\begin{equation*}
		- \frac{1+z}{1-z} + \frac{1 + \psi(z) }{1 - \psi(z) }
		=2 \left( \frac{ z^2 \overline{a} + z(\lambda - 1) - a \lambda}{ (z-1)(z (\lambda + \overline{a}) - a \lambda - 1) } \right),
	\end{equation*}
	which is constant precisely when $a=0$ and $\lambda = 1$.  In other words, $\psi(z) = z$ and $\zeta = 1$,
	from which we conclude that $a = a'$.
\end{proof}

Using Theorem \ref{Clark}, and Remarks \ref{Clark-remark} and \ref{Clark-needed} we get the following.

\begin{Corollary}
If $\Theta$ is the atomic inner function \eqref{atomic-basic}, then
$\B_{\Theta}^{a} \cong \B_{\Theta}^{a'}$ whenever $a, a' \in \dD$.
\end{Corollary}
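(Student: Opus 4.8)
The plan is to invoke Theorem~\ref{Clark}, which reduces the assertion to verifying that $\kappa(\mu_a) = \kappa(\mu_{a'})$ for all $a, a' \in \dD$, where $\mu_a$ denotes the Clark measure attached to $\Theta$ and the parameter $a$. Since $\kappa(\mu_a) = (\epsilon, n)$ records only whether $\mu_a$ has a nonzero continuous part and how many atoms it has, it suffices to show that for \emph{every} $a \in \dD$ the measure $\mu_a$ is purely atomic with countably infinitely many atoms; then $\kappa(\mu_a) = (0, \infty)$ for all such $a$, the hypothesis of Theorem~\ref{Clark} is automatically satisfied, and the conclusion follows at once.

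For the atomic inner function \eqref{atomic-basic}, Remark~\ref{Clark-remark} supplies the explicit formula
\[
	\mu_a = \sum_{\Theta(\zeta) = a} \frac{|\zeta - 1|^2}{2}\, \delta_{\zeta},
\]
the sum being taken over the carrier $E_a = \{\zeta \in \dD : \lim_{r \to 1^-} \Theta(r\zeta) = a\}$, which Remark~\ref{Clark-remark} identifies as a countable set clustering only at $\zeta = 1$. In particular $\mu_a$ is carried by a countable set, hence is purely atomic, so the $\epsilon$-component of $\kappa(\mu_a)$ is $0$; and since $E_a$ possesses a cluster point it must be infinite, so $\mu_a$ has exactly $\aleph_0$ atoms. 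Therefore $\kappa(\mu_a) = (0, \infty)$ for every $a \in \dD$, and Theorem~\ref{Clark} gives $\B_{\Theta}^a \cong \B_{\Theta}^{a'}$ for all $a, a' \in \dD$.

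The only point needing the slightest care — and it is minor — is the step from ``countable'' to ``countably infinite'' for $E_a$; this is exactly what the clustering statement in Remark~\ref{Clark-remark} provides, since a finite subset of $\dD$ has no accumulation point. One could alternatively bypass the invariant $\kappa$ altogether and argue directly from Remark~\ref{Clark-needed}: for each $a \in \dD$ one has $\B_{\Theta}^a \cong L^{\infty}(\mu_a)$, and the Halmos--von Neumann theorem then identifies all of these algebras because each $\mu_a$ is a purely atomic measure with infinitely many atoms. Either route is essentially immediate once the structure of $\mu_a$ recorded in Remark~\ref{Clark-remark} is in hand, so no genuine obstacle arises.
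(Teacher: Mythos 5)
Your argument is correct and is exactly the paper's route: the paper's (one-line) proof also invokes Theorem \ref{Clark} together with the description of $\mu_a$ in Remarks \ref{Clark-remark} and \ref{Clark-needed}, so that every Clark measure of the atomic inner function is purely atomic with countably infinitely many atoms and hence $\kappa(\mu_a)=(0,\infty)$ for all $a\in\dD$. Your extra remark on passing from ``countable'' to ``countably infinite'' via the cluster point at $\zeta=1$ is a correct and harmless elaboration of the same proof.
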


From the proof of Theorem \ref{t-atomic-basic} we see the following.

\begin{Corollary}
If $\Theta$ is any singular inner function and $a, a' \in \D$, then $\mathcal{B}_{\Theta}^{a} \cong \mathcal{B}_{\Theta}^{a'} \Rightarrow |a| = |a'|$.
\end{Corollary}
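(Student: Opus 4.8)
The plan is to recycle the opening part of the proof of Theorem~\ref{t-atomic-basic}, noting that the only features of the atomic inner function used in that first paragraph are that it has no zeros in $\D$ and that it maps $\D$ onto $\D\setminus\{0\}$ --- and both properties hold for \emph{every} singular inner function. So suppose $\Theta$ is singular inner and $\B_{\Theta}^{a}\cong\B_{\Theta}^{a'}$ with $a,a'\in\D$. By Theorem~\ref{t-main-Sedlock} there is a unimodular constant $\zeta$ and a disk automorphism $\psi$ with $\Theta = b_{-a}(\zeta b_{a'})\circ\Theta\circ\psi$. Write $g:=b_{-a}(\zeta b_{a'})$; by \eqref{b-comp} this is a disk automorphism whose unique zero in $\D$ is $w_{0}=\dfrac{\zeta a'-a}{\zeta-a\overline{a'}}$, and $w_{0}=0$ precisely when $a=\zeta a'$, i.e.\ precisely when $|a|=|a'|$.

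The key step is then the short contradiction used in Theorem~\ref{t-atomic-basic}: assume $w_{0}\neq0$. Since a singular inner function maps $\D$ onto $\D\setminus\{0\}$, and $\psi$ is a bijection of $\D$, the composition $\Theta\circ\psi$ also maps $\D$ onto $\D\setminus\{0\}$; hence there is $z_{0}\in\D$ with $(\Theta\circ\psi)(z_{0})=w_{0}$. But then $\Theta(z_{0})=g\bigl((\Theta\circ\psi)(z_{0})\bigr)=g(w_{0})=0$, contradicting the fact that a singular inner function has no zeros in $\D$. Therefore $w_{0}=0$, and so $|a|=|a'|$. The remaining parts of the proof of Theorem~\ref{t-atomic-basic}, which go on to force $\zeta=1$, $\psi(z)=z$, and hence $a=a'$, rely on the explicit form of the atomic inner function and are neither needed nor available here.

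The one point that genuinely goes beyond the atomic case is the assertion that a nonconstant singular inner function $S_{\mu}$ omits only the value $0$ in $\D$; this is where I expect any real work to lie, although it is essentially classical. Writing $S_{\mu}=e^{-P_{\mu}}$ with $P_{\mu}(z)=\int_{\dD}\frac{\zeta+z}{\zeta-z}\,d\mu(\zeta)$ mapping $\D$ into the open right half-plane, one has $S_{\mu}\neq0$ on $\D$; and for $w_{0}=re^{i\theta_{0}}$ with $0<r<1$ the preimage of $w_{0}$ under $w\mapsto e^{-w}$ is the vertical lattice $\{-\log r - i\theta_{0} + 2\pi i k : k\in\Z\}$, all of whose points lie in the open right half-plane. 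Since near any point of $\operatorname{supp}\mu$ the function $P_{\mu}$ has real part tending to $+\infty$ while its imaginary part is unbounded (e.g.\ along horocycles abutting a point carrying positive mass, or more generally a point of infinite symmetric derivative of $\mu$), a Rouch\'e-type argument shows $P_{\mu}(\D)$ meets every such lattice, whence $S_{\mu}(\D)=\D\setminus\{0\}$. With this in hand, the rest is a direct transcription of the atomic argument.
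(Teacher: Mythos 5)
Your reduction of the corollary to the first paragraph of the proof of Theorem~\ref{t-atomic-basic} is exactly what the paper intends (its entire justification is the phrase ``from the proof of Theorem~\ref{t-atomic-basic}''), and that structural part of your argument is fine: by Theorem~\ref{t-main-Sedlock} one gets $\Theta=b_{-a}(\zeta b_{a'})\circ\Theta\circ\psi$, the automorphism $g=b_{-a}(\zeta b_{a'})$ has its zero at $w_0=\frac{\zeta a'-a}{\zeta-a\overline{a'}}$ by \eqref{b-comp}, and $w_0=0$ forces $a=\zeta a'$, hence $|a|=|a'|$; moreover, if $\Theta$ attains the value $w_0$ then the right-hand side has a zero while the left-hand side does not.

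The genuine gap is the lemma you isolate at the end, and it is not just unproved but false as stated: a nonconstant singular inner function need not map $\D$ onto $\D\setminus\{0\}$. Frostman's theorem only says that the set of omitted values of an inner function has logarithmic capacity zero, and zero-free inner functions can omit capacity-zero sets strictly larger than $\{0\}$; the classical construction is the universal covering map $\pi$ of $\D\setminus E$ for a compact capacity-zero set $E\subset\D$ with $0\in E$, which is a zero-free inner function omitting every point of $E$. For such a $\Theta=e^{-P_\mu}$ (up to a unimodular constant) the range of $P_\mu$ misses the entire lattice $\{-\log|w_0|-i\arg w_0+2\pi i k:k\in\Z\}$ for each omitted $w_0\neq0$, so the ``Rouch\'e-type'' claim that $P_\mu(\D)$ meets every such lattice cannot hold in general, and nothing in your sketch (unboundedness of $\Im P_\mu$ near $\operatorname{supp}\mu$) rules this out. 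Worse, the surjectivity really is the crux and cannot simply be waved away: taking $E=\{0,p\}$ with $p\neq0$, the involutive automorphism $g$ of $\D$ interchanging $0$ and $p$ preserves $\D\setminus E$, and uniqueness of the universal cover yields an automorphism $\psi$ with $\pi=g\circ\pi\circ\psi$; writing $g=\zeta b_{p}$ and applying Theorem~\ref{t-main-Sedlock} with $a=0$, $a'=p$ would give $\mathcal{B}_{\pi}^{0}\cong\mathcal{B}_{\pi}^{p}$ with $|0|\neq|p|$. So your argument (like the paper's one-line derivation, which implicitly assumes $\Theta(\D)=\D\setminus\{0\}$) proves the corollary only for those singular inner functions whose range is exactly the punctured disk, such as the atomic function; for the general statement a different idea, or an added hypothesis on the omitted values of $\Theta$, is needed, and the step you flagged as ``where the real work lies'' is precisely where the proof breaks down.
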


This next group of results shows that when there is some sort of symmetry in the inner function $\Theta$, we can have spatially isomorphic Sedlock algebras. We will make this more precise in Theorem  \ref{equalityoftheas} below. For now we begin with a few examples. 

\begin{Proposition} \label{uv}
Suppose that $\Theta$ is inner such that there is a $u \in \dD \setminus \{1\}$ with $\Theta(u z) = v \Theta(z)$ for some $v \in \dD \setminus \{1\}$. Then for any $a \in \D$, $\mathcal{B}_{\Theta}^{a} \cong \mathcal{B}_{\Theta}^{a \overline{v}}$.
\end{Proposition}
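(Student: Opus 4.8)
The plan is to verify the criterion of Theorem~\ref{t-main-Sedlock} directly with an explicit choice of data. Set $a' := a\overline{v}$; since $|a\overline{v}| = |a| < 1$ we have $a\overline{v} \in \D$, so $\mathcal{B}_{\Theta}^{a\overline{v}}$ is defined. Take the unimodular constant $\zeta := v$ and let $\psi$ be the rotation $\psi(z) := \overline{u}\,z$, which is a disk automorphism. By Theorem~\ref{t-main-Sedlock} it then suffices to establish the identity
\[
	\Theta = b_{-a}(\zeta\, b_{a'}) \circ \Theta \circ \psi.
\]

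First I would unwind the symmetry hypothesis: replacing $z$ by $\overline{u}\,z$ in $\Theta(uz) = v\,\Theta(z)$ gives $\Theta(z) = v\,\Theta(\overline{u}\,z)$, i.e.\ $\Theta \circ \psi = \overline{v}\,\Theta$. Next I would identify the M\"obius map $b_{-a}(v\, b_{a\overline{v}})$. Substituting $\zeta = v$ and $a' = a\overline{v}$ into \eqref{b-comp} (which was derived in the proof of Theorem~\ref{t-atomic-basic}), one sees that the inner and outer shift parameters both vanish, because $\zeta a' - a = v a\overline{v} - a = 0$ and $\overline{a'} - \overline{a}\zeta = \overline{a}v - \overline{a}v = 0$, while the leading unimodular factor simplifies,
\[
	\frac{\zeta - a\overline{a'}}{1 - \overline{a}a'\zeta} = \frac{v(1 - |a|^2)}{1 - |a|^2} = v.
\]
Hence $b_{-a}(v\, b_{a\overline{v}})(z) = v z$. (Alternatively this drops out of \eqref{babc} and \eqref{bazeta}: by \eqref{bazeta} one has $b_{-a}(v w) = v\, b_{-a\overline{v}}(w)$, and $b_{-a\overline{v}} \circ b_{a\overline{v}}$ is the identity by \eqref{babc}.)

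Combining the two computations,
\[
	b_{-a}(\zeta\, b_{a'}) \circ \Theta \circ \psi(z) = v\,\Theta(\overline{u}\,z) = v\,\overline{v}\,\Theta(z) = \Theta(z),
\]
which is exactly the identity required, so $\mathcal{B}_{\Theta}^{a} \cong \mathcal{B}_{\Theta}^{a\overline{v}}$. I do not expect a genuine obstacle: the real content is the observation that the rotation $\psi(z) = \overline{u}\,z$ transports the symmetry of $\Theta$ into precisely the form demanded by Theorem~\ref{t-main-Sedlock} once the unimodular slack $\zeta$ is taken to be $v$, after which the verification is a one-line computation; the only mildly fussy point is confirming that $b_{-a}(v\, b_{a\overline{v}})$ is genuinely the rotation by $v$, where the automorphism identities \eqref{b-comp} (equivalently \eqref{babc}--\eqref{bazeta}) are used. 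Note that the hypotheses $u \neq 1$ and $v \neq 1$ are not needed for the argument; they only serve to exclude the vacuous case.
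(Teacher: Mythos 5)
Your proof is correct and follows essentially the same route as the paper: both verify the criterion of Theorem~\ref{t-main-Sedlock} with $\zeta = v$, $a' = a\overline{v}$, a rotation $\psi$, and the identification $b_{-a}(v\,b_{a\overline{v}})(z) = vz$ via \eqref{bazeta} (or \eqref{b-comp}). Your explicit choice $\psi(z) = \overline{u}z$ is the right one (the paper's ``$\psi(z) = uv$'' is evidently a typo), and your closing remark that $u \neq 1$, $v \neq 1$ are inessential is accurate.
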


\begin{proof}
With $\phi(z) = v z$ and $\psi(z) = u v$, a simple computation shows that $\Theta = \phi \circ \Theta \circ \psi$. Using \eqref{bazeta} we see that $\phi(z) = b_{-a} (v b_{a
\overline{v}})$. Now use Theorem \ref{t-main-Sedlock}.
\end{proof}

Proposition \ref{uv} will be generalized in Lemma \ref{bmazba} below. 

\begin{Example}
\begin{enumerate}
\item If $\Theta$ is any odd inner function, then $\mathcal{B}_{\Theta}^{a} \cong \mathcal{B}_{\Theta}^{-a}$ for any $a \in \D$. One can see this by letting $u = v = -1$ in Proposition \ref{uv}.
\vskip .05in
\item Fix $z_0 \in \D \setminus\{0\}$ and $n \in \mathbb{N}$. Let
$$\Theta(z) = z b_{a_1}(z) b_{a_2}(z) \cdots b_{a_n}(z),$$
where $a_1, a_2, \ldots, a_n$ are the $n$-th roots of $z_0$.
If $u$ is a primitive root of unity one can check that
\begin{equation*}
\Theta(u^k z) = u^k \Theta(z) \tag{$1 \leq k \leq n - 1$}
\end{equation*} and so for any $a \in \D$ we have $\mathcal{B}_{\Theta}^{a} \cong \mathcal{B}_{\Theta}^{a \overline{u}^k}$.
\vskip .05in
\item Let $\Theta(z) = z S_{\mu}(z)$, where $S_{\mu}$ is the singular inner function with singular measure $\mu = \delta_1 + \delta_{-1} + \delta_{i} + \delta_{-i}$. A computation shows that $S_{\mu}(i z) = S_{\mu}(z)$ and so $\Theta(i z) = i \Theta(z)$. This with $u = v = i$ in Proposition \ref{uv} we see that
$\mathcal{B}_{\Theta}^{a} \cong \mathcal{B}_{\Theta}^{-i a}$ for any $a \in \D$. One can continue this as follows: If $u$ is a primitive $n$th root of unity and $\mu$ has unit point masses at $u^k$, $k = 1, \ldots,  n$, then $S_{\mu}(u^k z) =  S_{\mu}(z)$.  From here we have $\Theta(uz) = u \Theta(z)$.  Then for each $a  \in \D$, 
$\mathcal{B}^{a}_{\Theta} \cong \mathcal{B}^{u^k}_{\Theta}$ for $k = 1, 2, \ldots, n$.
\end{enumerate}
\end{Example}

We have seen examples where $\mathcal{B}_{\Theta}^{a} \cong \mathcal{B}_{\Theta}^{a'}$ with $a \not = a'$ and some examples where $\mathcal{B}_{\Theta}^{a} \cong \mathcal{B}_{\Theta}^{a'}$ implies $a = a'$. What are conditions on $\Theta$ so that $\mathcal{B}_{\Theta}^{a} \cong \mathcal{B}_{\Theta}^{a'}$ always implies $a = a'$?

\begin{Theorem} \label{equalityoftheas}
For an inner function $\Theta$, the following are equivalent. 
\begin{enumerate}
\item If  $a, a' \in \widehat{\C} \setminus \dD$ and $\mathcal{B}_{\Theta}^{a} \cong \mathcal{B}_{\Theta}^{a'}$, then $a = a'$. 
\vskip .05in
\item If $\phi, \psi$ are disk automorphisms with either $\phi \circ \Theta = \Theta \circ \psi$ or $\phi \circ \Theta = \Theta^{\#} \circ \psi$ then $\phi(z) = z$. 
\end{enumerate}
\end{Theorem}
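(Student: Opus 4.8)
The plan is to prove the two implications separately, in each case translating $\B_\Theta^a\cong\B_\Theta^{a'}$ into an identity $\Theta=g\circ\Theta^{(\#)}\circ\psi$ via Theorem~\ref{t-main-Sedlock} and Corollary~\ref{c-ext-Sedlock-main}, where $g$ is a disk automorphism of the form $b_{-a}(\zeta b_{a'})$ (or $b_{-a}(\zeta b_{1/a'})$, etc.), and then reading off what (2) says about $g$. The elementary fact I would isolate at the outset is that \emph{every} disk automorphism $g$ can be written as $g=b_{-a}\circ(\zeta_a b_{g^{-1}(a)})$ for \emph{any} prescribed $a\in\D$ and a suitable $\zeta_a\in\dD$: indeed $b_a\circ g$ is an automorphism carrying $g^{-1}(a)$ to $0$, hence equals $\zeta_a b_{g^{-1}(a)}$ for some unimodular $\zeta_a$.

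For $(1)\Rightarrow(2)$ I would argue by contraposition. If (2) fails there are disk automorphisms $\psi$ and $\phi\neq\mathrm{id}$ with $\phi\circ\Theta=\Theta\circ\psi$ or $\phi\circ\Theta=\Theta^\#\circ\psi$. In the first case write $\Theta=\phi\circ\Theta\circ\psi^{-1}$; since $\phi\neq\mathrm{id}$ has at most two fixed points in $\widehat{\C}$, pick $a\in\D$ with $a':=\phi^{-1}(a)\neq a$, so $\phi=b_{-a}(\zeta_a b_{a'})$ with $a,a'\in\D$ distinct, and Theorem~\ref{t-main-Sedlock} gives $\B_\Theta^a\cong\B_\Theta^{a'}$. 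In the second case write $\Theta=\phi^{-1}\circ\Theta^\#\circ\psi$; for any $a\in\D$ set $a':=1/\phi(a)\in\widehat{\C}\setminus\D^-$ (with $a'=\infty$ if $\phi(a)=0$), so $\phi^{-1}=b_{-a}(\zeta_a b_{1/a'})$ and Corollary~\ref{c-ext-Sedlock-main} gives $\B_\Theta^a\cong\B_\Theta^{a'}$, where now $a$ and $a'$ are automatically distinct since they lie in $\D$ and in $\widehat{\C}\setminus\D^-$ respectively. In either case (1) fails. For $(2)\Rightarrow(1)$, assume (2) and $\B_\Theta^a\cong\B_\Theta^{a'}$ with $a,a'\in\widehat{\C}\setminus\dD$, and split into the cases $a,a'\in\D$; $a,a'\in\widehat{\C}\setminus\D^-$; and (relabeling if necessary) $a\in\D$, $a'\in\widehat{\C}\setminus\D^-$. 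In the first case Theorem~\ref{t-main-Sedlock} yields $\Theta=b_{-a}(\zeta b_{a'})\circ\Theta\circ\psi$, so $g:=b_{-a}(\zeta b_{a'})$ satisfies $g^{-1}\circ\Theta=\Theta\circ\psi$; then (2) forces $g=\mathrm{id}$, and equating $b_{-a}\circ(\zeta b_{a'})$ with the identity map forces $\zeta=1$ and $a=a'$. The second case is the same after replacing $\Theta$ by $\Theta^\#$ and $(a,a')$ by $(1/a,1/a')$, using that (2) for $\Theta$ is equivalent to (2) for $\Theta^\#$ because $\phi\circ\Theta^\#=\Theta^\#\circ\psi\iff\phi^\#\circ\Theta=\Theta\circ\psi^\#$.

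The mixed case is the only one requiring real work. Here Corollary~\ref{c-ext-Sedlock-main} gives $\Theta=b_{-a}(\zeta b_{1/a'})\circ\Theta^\#\circ\psi$, and (2) again forces $g:=b_{-a}(\zeta b_{1/a'})=\mathrm{id}$; but equating $g$ with the identity this time yields only $\zeta=1$ and $a=1/a'$, hence merely $\Theta=\Theta^\#\circ\psi$. The theorem therefore reduces to showing that \emph{this situation is incompatible with} (2): that $\Theta=\Theta^\#\circ\psi$ for some disk automorphism $\psi$ forces a nontrivial holomorphic symmetry $\phi\circ\Theta=\Theta\circ\psi'$ with $\phi\neq\mathrm{id}$. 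To do this I would study the anti‑holomorphic automorphism $R(z):=\overline{\psi(z)}$, which satisfies $\overline{\Theta(z)}=\Theta(R(z))$ and $R\circ R=\psi^\#\circ\psi=:\rho$ with $\Theta\circ\rho=\Theta$. If $\rho=\mathrm{id}$ then $R$ is an anti‑holomorphic involution, hence conjugate to $z\mapsto\bar z$, so after precomposing with an automorphism $\Theta$ becomes a \emph{real} inner function whose zero set (equivalently critical set) is symmetric under conjugation, and one extracts a Möbius symmetry from that symmetry of the critical data. If $\rho\neq\mathrm{id}$ then $\rho$ must be elliptic of finite order and $\Theta$ factors through a power about its fixed point, which carries an obvious nontrivial symmetry. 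Either way (2) is contradicted, so the mixed case cannot occur and $a=a'$ in all cases. I expect this last reduction — producing the symmetry out of $\Theta=\Theta^\#\circ\psi$ — to be the crux; the remainder is bookkeeping with Crofoot transforms and the two classification results.
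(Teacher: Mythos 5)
Your preliminary normal form $g=b_{-a}\bigl(\zeta_a b_{g^{-1}(a)}\bigr)$ is exactly the paper's Lemma \ref{bmazba} (with a cleaner identification of the parameter), your proof of $(1)\Rightarrow(2)$ by contraposition is correct --- indeed more complete than the paper's converse direction, which never addresses the alternative $\phi\circ\Theta=\Theta^{\#}\circ\psi$ --- and your treatment of the cases $a,a'\in\D$ and $a,a'\in\widehat{\C}\setminus\D^{-}$ in $(2)\Rightarrow(1)$ is the paper's argument. The problem is the mixed case, which you correctly isolate: there condition (2) applied to $\Theta=b_{-a}(\zeta b_{1/a'})\circ\Theta^{\#}\circ\psi$ yields only $\zeta=1$, $a'=1/a$ and $\Theta=\Theta^{\#}\circ\psi$, which by itself contradicts nothing. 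Be aware that the paper does not help you here: its proof opens with ``without loss of generality $a,a'\in\D$'' and only ever invokes the first equation in (2), so the mixed case is simply not treated there; your crux lies outside what the paper actually proves.

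Your proposed resolution of that crux has a genuine gap. First, the dichotomy you sketch is false for general inner functions: $\Theta\circ\rho=\Theta$ with $\rho\neq\mathrm{id}$ does not force $\rho$ to be elliptic of finite order, since the atomic inner function $\exp\left(-\frac{1+z}{1-z}\right)$ is the universal covering map of $\D\setminus\{0\}$ and is invariant under an infinite parabolic group. Second, and more seriously, in the case $\rho=\mathrm{id}$ the reduction to a $\#$-symmetric $\Theta$ does not produce the desired symmetry: if $\phi\circ\Theta=\Theta\circ\tau$ then $\phi$ must permute the critical values of $\Theta$ in $\D$, and for a Blaschke product of degree four with generic real zeros these are three real points admitting no nontrivial disk automorphism preserving them (no elliptic of order three preserves a geodesic, and the fix-one-swap-two involutions require a hyperbolic-midpoint coincidence that generically fails). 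Such a $\Theta$ satisfies $\Theta=\Theta^{\#}$, hence $\B_{\Theta}^{0}\cong\B_{\Theta}^{\infty}$ by Proposition \ref{sharp-B}, while every $\phi$ occurring in (2) is forced to be the identity; so the Möbius symmetry you hope to ``extract from the critical data'' simply need not exist (it does exist in degrees two and three, where the monodromy of the branched cover is unique, which is probably why the claim looks plausible). Consequently your argument does not close the mixed case, and this example suggests the mixed case of the equivalence itself requires either an additional argument or a restriction of (1) to parameters on the same side of the circle --- the honest move is to flag this explicitly rather than to treat it as a routine remaining detail.
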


The proof of Theorem \ref{equalityoftheas} requires the following technical lemma. 

\begin{Lemma} \label{bmazba}
Let $\psi$ be a disk automorphism. Then for each $a \in \D$, there is a $\zeta \in \dD$ and $a' \in \D$ so that 
$\psi = b_{-a}(\zeta b_{a'})$. 
\end{Lemma}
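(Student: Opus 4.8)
The plan is to reduce Lemma~\ref{bmazba} to the standard normal form for disk automorphisms, after unwinding the composition notation. The desired identity $\psi = b_{-a}(\zeta b_{a'})$ says exactly that $\psi = b_{-a} \circ F$, where $F$ denotes the disk automorphism $z \mapsto \zeta\, b_{a'}(z)$. A one-line computation (solve $w = \frac{z+a}{1+\overline{a}z}$ for $z$) shows that $b_{-a}$ and $b_{a}$ are mutually inverse disk automorphisms, so $\psi = b_{-a}\circ F$ is equivalent to $F = b_{a}\circ\psi$. Hence it suffices to exhibit $\zeta \in \dD$ and $a' \in \D$ with $b_{a}\circ\psi = \zeta\, b_{a'}$.

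Now $b_{a}\circ\psi$ is a composition of disk automorphisms, hence is itself a disk automorphism, and here one invokes the standard fact (already used, e.g., in the proof of Proposition~\ref{J-rel}) that every disk automorphism $G$ can be written as $G = \zeta\, b_{a'}$ with $a' := G^{-1}(0) \in \D$ and $\zeta \in \dD$; indeed $G \circ b_{a'}^{-1}$ sends $0$ to $G(a') = 0$ and is therefore a rotation $z \mapsto \zeta z$, whence $G = \zeta\, b_{a'}$. Applying this with $G = b_{a}\circ\psi$ produces the required $\zeta$ and $a'$, and then $\psi = b_{-a}\circ(\zeta\, b_{a'}) = b_{-a}(\zeta\, b_{a'})$, as claimed.

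There is essentially no obstacle here: the only genuine content is the normal-form observation, and the only (trivial) point to keep track of is that $a' = (b_{a}\circ\psi)^{-1}(0)$ automatically lies in $\D$ and that the leftover rotation constant $\zeta$ is automatically unimodular. A more computational alternative would be to write $\psi$ in standard form $\lambda\, b_{w}$ and then solve, via the identities \eqref{babc} and \eqref{bazeta}, the equations obtained by expanding $b_{-a}(\zeta\, b_{a'})$ for $\zeta$ and $a'$ in terms of $a$, $\lambda$, $w$; this works but is needlessly messy, so I would present only the composition-based argument.
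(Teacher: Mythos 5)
Your argument is correct, and its first step (turning $\psi = b_{-a}(\zeta b_{a'})$ into $b_{a}\circ\psi = \zeta b_{a'}$ via the fact that $b_{a}$ and $b_{-a}$ are inverse automorphisms) is exactly how the paper's proof begins. Where you diverge is in producing $\zeta$ and $a'$: you invoke the abstract normal form for disk automorphisms, taking $a' = (b_a\circ\psi)^{-1}(0)$ and getting the unimodular $\zeta$ from the Schwarz lemma, whereas the paper writes $\psi = \lambda b_c$ and grinds through the composition formula \eqref{b-comp} to obtain the explicit expressions \eqref{mud}, namely $\zeta = \lambda\frac{1 + a\overline{\lambda}\overline{c}}{1 + \overline{a}\lambda c}$ and $a' = \frac{a\overline{\lambda}+c}{1+\overline{c}a\overline{\lambda}}$. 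Your route is cleaner and entirely adequate for the lemma as stated; but note that the ``needlessly messy'' computational alternative you set aside is not just busywork in the paper's context --- the explicit formulas \eqref{mud} are cited again in the proofs of Theorem \ref{equalityoftheas} (to conclude $a=a'$ and $\zeta_a=1$) and of Corollary \ref{cor-auto} (to write down the automorphism $\psi(c)$ explicitly), so if one adopted your proof verbatim those later arguments would need the formulas supplied separately. In short: your proof trades explicitness for conceptual economy, and both are valid.
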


\begin{proof}
Let 
\begin{equation*}
\psi(z) = \lambda b_c. \tag{$\lambda \in \dD, c \in \D$}
\end{equation*}
Note, for $a, a' \in \D$ and $\zeta \in \dD$, that
$$b_{-a}(\zeta b_{a'}) = \lambda b_c \Leftrightarrow b_a(\lambda b_c) = \zeta b_{a'}.$$
From \eqref{b-comp} we see that 
\begin{equation} \label{mud}
\zeta = \lambda \frac{1 + a \overline{\lambda} \overline{c}}{1 + \overline{a} \lambda c}, \quad a' = \frac{a \overline{\lambda} + c}{1 + \overline{c} a \overline{\lambda}}.
\end{equation} 
This completes the proof.
\end{proof}

\begin{proof}[Proof of Theorem \ref{equalityoftheas}] Without loss of generality, we will assume that $a, a' \in \D$. Assume (ii) and suppose that $\mathcal{B}_{\Theta}^{a} \cong \mathcal{B}_{\Theta}^{a'}$. By Theorem \ref{t-main-Sedlock} we know there is a $\zeta \in \dD$ and a disk automorphism $\psi$ so that 
$$b_{-a} (\zeta b_{a'}) \circ \Theta = \Theta \circ \psi.$$ But by our assumption (ii) we see that 
$b_{-a} (\zeta b_{a'})$ is the identity automorphism. From \eqref{mud} it follows that $a = a'$, which proves (i). 

Conversely suppose that (i) holds and assume that $\phi, \psi$ are disk automorphisms with $\phi \circ \Theta   = \Theta \circ \psi$. Our goal is to show that $\phi(z) = z$. In Lemma \ref{bmazba} choose $a = 0$ to produce $\zeta \in \dD$ and $a' \in \D$ so that $\phi = b_{-0}(\zeta b_{a'})$. By Theorem \ref{t-main-Sedlock}  we have 
$\mathcal{B}_{\Theta}^{0} \cong \mathcal{B}_{\Theta}^{a'}$ and so, by our assumption (i), it must be the case that $a' = 0$. Thus $\phi(z) = \zeta z$. We will now show that $\zeta = 1$.

Choose $a \not = 0$ and argue from above that 
$\phi = b_{-a}(\zeta_a b_a)$ for some $\zeta_a \in \dD$. But from \eqref{mud} we have 
$$b_{-a} (\zeta_a b_a) = \mu b_d,$$
where 
$$\mu = \frac{\zeta_a - |a|^2}{1 - |a|^2 \zeta_a}, \quad d = \frac{\zeta_a a - a}{\zeta_a - |a|^2}.$$ But
$\phi(z) = \zeta z$ and so $d = 0$ (which implies $\zeta_a = 1$ and $\mu = 1$) and $\mu = \zeta$. Thus $\zeta = 1$. This proves (ii). Our proof is now complete. 
\end{proof}

Theorem \ref{equalityoftheas} has an interesting corollary. 

\begin{Corollary} \label{cor-auto}
Suppose $a, a' \in \widehat{\C}\setminus \dD$ with $a \not = a'$, and $\mathcal{B}_{\Theta}^{a} \cong \mathcal{B}_{\Theta}^{a'}$.
\begin{enumerate}
\item If $a, a' \in \D$, then there is a non-trivial automorphism $\psi$ of $\widehat{\C}$ mapping $\D$ to itself so that $\mathcal{B}_{\Theta}^{c} \cong \mathcal{B}_{\Theta}^{\psi(c)}$ for every $c \in \D$.
\vskip .05in
\item If  $a, a' \in \widehat{\C} \setminus \D^{-}$, then there is a non-trivial automorphism $\psi$ of $\widehat{\C}$ mapping $\widehat{\C} \setminus \D^{-}$ to itself so that $\mathcal{B}_{\Theta}^{c} \cong \mathcal{B}_{\Theta}^{\psi(c)}$ for every $c \in \widehat{\C} \setminus \D^{-}$.
\vskip .05in
\item If $a \in \D, a' \in \widehat{\C} \setminus \D^{-}$, then  there is an automorphism $\psi$ of $\widehat{\C}$ mapping $\D$ to $\widehat{\C} \setminus \D^{-}$ so that $\mathcal{B}_{\Theta}^{c} \cong \mathcal{B}_{\Theta}^{\psi(c)}$ for every $c \in \D$.
\end{enumerate}
\end{Corollary}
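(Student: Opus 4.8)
The plan is to convert the hypothesis ``$\mathcal{B}_{\Theta}^{a}\cong\mathcal{B}_{\Theta}^{a'}$ with $a\neq a'$'' into a single functional equation for $\Theta$, and then to re-read that equation simultaneously at \emph{every} admissible parameter value by sliding the base point of the automorphism appearing in it; Lemma \ref{bmazba} and formula \eqref{mud} are exactly the tools that make this possible.

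For part (i), with $a,a'\in\D$ distinct, Theorem \ref{t-main-Sedlock} first produces a unimodular $\zeta$ and a disk automorphism $\psi_0$ with $\Theta=\phi\circ\Theta\circ\psi_0$, where $\phi:=b_{-a}(\zeta b_{a'})$ is itself a disk automorphism. Writing $\phi$ in standard form $\phi=\lambda b_{\beta}$ ($\lambda\in\dD$, $\beta\in\D$), I would invoke Lemma \ref{bmazba} to obtain, for each base point $c\in\D$, a unimodular $\zeta_c$ with $\phi=b_{-c}(\zeta_c b_{\rho(c)})$, where by \eqref{mud}
\[ \rho(c):=\frac{\overline{\lambda}\,c+\beta}{1+\overline{\beta}\,\overline{\lambda}\,c}. \]
The key point to verify is that the new target parameter depends on the new base point through this one fixed Möbius map $\rho$, independent of $c$; that $\rho$ is a disk automorphism (it factors as the Blaschke factor $b_{-\beta}$ precomposed with the rotation $z\mapsto\overline{\lambda}z$), hence extends to an automorphism of $\widehat{\C}$ carrying $\D$ onto $\D$; and that $\rho$ is non-trivial, because the original representation $\phi=b_{-a}(\zeta b_{a'})$ forces $\rho(a)=a'\neq a$. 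Granting this, for every $c\in\D$ the identity $\Theta=b_{-c}(\zeta_c b_{\rho(c)})\circ\Theta\circ\psi_0$ is precisely the hypothesis of Theorem \ref{t-main-Sedlock}, so $\mathcal{B}_{\Theta}^{c}\cong\mathcal{B}_{\Theta}^{\rho(c)}$, and $\psi:=\rho$ proves (i).

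Parts (ii) and (iii) run on the same engine. For (iii), with $a\in\D$ and $a'\in\widehat{\C}\setminus\D^{-}$ (so automatically $a\neq a'$), the second assertion of Corollary \ref{c-ext-Sedlock-main} gives $\Theta=\phi\circ\Theta^{\#}\circ\psi_0$ with $\phi:=b_{-a}(\zeta b_{1/a'})$; sliding the base point of $\phi$ as above yields, for each $c\in\D$, an identity $\Theta=b_{-c}(\zeta_c b_{\rho(c)})\circ\Theta^{\#}\circ\psi_0$, and since $\rho(c)\in\D$ forces $1/\rho(c)\in\widehat{\C}\setminus\D^{-}$, Corollary \ref{c-ext-Sedlock-main} gives $\mathcal{B}_{\Theta}^{c}\cong\mathcal{B}_{\Theta}^{1/\rho(c)}$; then $\psi(c):=1/\rho(c)$ is a Möbius transformation of $\widehat{\C}$ carrying $\D$ to $\widehat{\C}\setminus\D^{-}$. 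For (ii) I would instead deduce the result from (i) by conjugating with the sharp isomorphism: by Proposition \ref{sharp-B}, $\Lambda_{\#}$ sends $\mathcal{B}_{\Theta}^{b}$ to $\mathcal{B}_{\Theta^{\#}}^{1/b}$, so from $\mathcal{B}_{\Theta}^{a}\cong\mathcal{B}_{\Theta}^{a'}$ we get $\mathcal{B}_{\Theta^{\#}}^{1/a}\cong\mathcal{B}_{\Theta^{\#}}^{1/a'}$ with $1/a,1/a'\in\D$ distinct; applying part (i) to $\Theta^{\#}$ yields a non-trivial disk automorphism $\rho$ with $\mathcal{B}_{\Theta^{\#}}^{d}\cong\mathcal{B}_{\Theta^{\#}}^{\rho(d)}$ for all $d\in\D$, and then $\psi(c):=1/\rho(1/c)$ — the conjugate of $\rho$ by the involution $z\mapsto1/z$ — is a non-trivial automorphism of $\widehat{\C}$ preserving $\widehat{\C}\setminus\D^{-}$, with $\mathcal{B}_{\Theta}^{c}\cong\mathcal{B}_{\Theta^{\#}}^{1/c}\cong\mathcal{B}_{\Theta^{\#}}^{\rho(1/c)}\cong\mathcal{B}_{\Theta}^{1/\rho(1/c)}$.

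I expect the only genuine obstacle to be the bookkeeping in part (i): namely, checking from \eqref{mud} that re-expressing the fixed automorphism $\phi$ with a varying base point really does produce the target parameter as one fixed Möbius image of the base point, and confirming that this map is a disk automorphism which is the identity precisely when $a=a'$. Once that is in hand, (i)--(iii) are just repeated applications of Theorem \ref{t-main-Sedlock} and Corollary \ref{c-ext-Sedlock-main} in their ``$\Leftarrow$'' direction.
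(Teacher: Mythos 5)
Your proposal is correct and follows essentially the same route as the paper: both arguments convert the hypothesis into the functional equation of Theorem \ref{t-main-Sedlock} (or Corollary \ref{c-ext-Sedlock-main}), use Lemma \ref{bmazba} together with \eqref{mud} to re-express the fixed automorphism $b_{-a}(\zeta b_{a'})$ at a varying base point $c$, observe that the resulting target parameter is a fixed (non-trivial) M\"obius image of $c$, and handle the exterior and mixed cases by conjugating with $z \mapsto 1/z$ via Proposition \ref{sharp-B}. Your slight repackaging of (ii) as ``apply (i) to $\Theta^{\#}$ and conjugate'' and of (iii) through the mixed case of Corollary \ref{c-ext-Sedlock-main} is only a cosmetic variation on the paper's proof.
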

\begin{proof}

Proof of (i): From \eqref{mud} we see that 
$$b_{-a}(\zeta b_{a'}) = \mu b_{d},$$
where 
$$\mu = \frac{\zeta - a \overline{a'}}{1 - \overline{a} a' \zeta}, \quad d = \frac{\zeta a' - a}{\zeta - a \overline{a'}}.$$ From Lemma \ref{bmazba} we know that for each $c \in \D$, there is a $w \in \dD$ and a $c' \in \D$ so that 
$$b_{-a}(\zeta b_{a'}) = b_{-c} (w b_{c'}).$$
By Theorem  \ref{t-main-Sedlock}  (applied to $\mathcal{B}_{\Theta}^{a} \cong \mathcal{B}_{\Theta}^{a'}$ and $\mathcal{B}_{\Theta}^{c} \cong \mathcal{B}_{\Theta}^{c'}$) we conclude that $\mathcal{B}_{\Theta}^{c} \cong \mathcal{B}_{\Theta}^{c'}$. Note, from \eqref{mud} that $$ c' = \frac{c + \mu d}{\mu + c \overline{d}}.$$
If we define 
$$\psi(c) = \overline{\mu} \frac{c + d \mu}{1 + c \overline{\mu d}}$$ then $\psi$ is a disk automorphism with the desired properties. 
\vskip .05in 

Proof of (ii): By Corollary \ref{c-ext-Sedlock-main}, there is a (non-trivial) disk automorphism $\psi$ so that 
$\mathcal{B}_{\Theta^{\#}}^{c} \cong \mathcal{B}_{\Theta^{\#}}^{\psi(c)}$ for $c \in \D$. By Proposition \ref{sharp-B} we have $\mathcal{B}_{\Theta}^{1/c} \cong \mathcal{B}_{\Theta}^{1/\psi(c)}$. 
\vskip .05in

Proof (iii): By  By Corollary \ref{c-ext-Sedlock-main}, there is a (non-trivial) disk automorphism $\psi$ so that $\mathcal{B}_{\Theta}^{c} \cong \mathcal{B}_{\Theta^{\#}}^{\psi(c)}$. Now apply Proposition \ref{sharp-B} to get 
$\mathcal{B}_{\Theta^{\#}}^{\psi(c)} \cong \mathcal{B}_{\Theta}^{1/\psi(c)}.$
\end{proof}

\begin{Example}
From Corollary \ref{same-arg} we know that if $\Theta$ is a Blaschke product whose zeros all have the same argument then $\mathcal{B}_{\Theta}^{0} \cong \mathcal{B}_{\Theta}^{\infty}$. From the techniques in the proof of Corollary \ref{cor-auto} we see that there is a $\zeta \in \dD$ such that $\mathcal{B}_{\Theta}^{c} \cong \mathcal{B}_{\Theta}^{\zeta/c}$ for every $c \in \D$.
\end{Example} 


\vskip .05in

The proof of Theorem \ref{t-main-Sedlock} can be easily modified to prove the following.

\begin{Theorem} \label{main-IJ}
Suppose $\Theta_1, \Theta_2$ are inner functions and $a_1, a_2 \in \D$. Then $$\mathcal{B}_{\Theta_1}^{a_1} \cong \mathcal{B}_{\Theta_2}^{a_2}$$
if and only if there is a unimodular constant $\zeta$ and a disk automorphism $\psi$ such that
\begin{equation} \label{IJ-ab-in-D}
\Theta_1 = b_{-a_1}(\zeta b_{a_2}) \circ \Theta_2 \circ \psi.
\end{equation}
If $a_1, a_2 \in \widehat{\C} \setminus \D^{-}$, then condition \eqref{IJ-ab-in-D} is replaced by 
\begin{equation*} 
\Theta_1^{\#} = b_{-1/a_1}(\zeta b_{1/a_2}) \circ (\Theta_2)^{\#} \circ \psi.
\end{equation*}
If $a_1 \in \D$ while $a_2 \in \widehat{\C} \setminus \D^{-}$ is in the exterior disk, then the condition \eqref{IJ-ab-in-D} is replaced by 
\begin{equation*} 
\Theta_1 = b_{-a_1}(\zeta b_{1/a_2}) \circ (\Theta_2)^{\#} \circ \psi.
\end{equation*}
\end{Theorem}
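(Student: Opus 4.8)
The plan is to carry over the proof of Theorem~\ref{t-main-Sedlock} with only cosmetic changes, the point being that that argument never actually used that the two inner functions coincided. The engine is the Crofoot identity \eqref{H/aH}: for $b\in\D$ we have $\B^{b}_{\Theta}\cong\B^{0}_{\Theta_b}$, so $\B^{a_1}_{\Theta_1}\cong\B^{a_2}_{\Theta_2}$ if and only if $\B^{0}_{(\Theta_1)_{a_1}}\cong\B^{0}_{(\Theta_2)_{a_2}}$, and after this reduction everything takes place among the analytic algebras $\B^{0}$, where the machinery of Section~3 applies equally well to two distinct inner functions.

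For the $(\Leftarrow)$ direction I would first rewrite the hypothesis in a more usable form. Composing $\Theta_1=b_{-a_1}(\zeta b_{a_2})\circ\Theta_2\circ\psi$ on the left with $b_{a_1}$ and using $(\zeta b_{a_2})\circ\Theta_2=\zeta\,(b_{a_2}\circ\Theta_2)$ shows this is equivalent to $(\Theta_1)_{a_1}=\zeta\,(\Theta_2)_{a_2}\circ\psi$, hence $\K_{(\Theta_1)_{a_1}}=\K_{(\Theta_2)_{a_2}\circ\psi}$. The unitary $U_\psi f=\sqrt{\psi'}\,(f\circ\psi)$ then maps $\K_{(\Theta_2)_{a_2}}$ onto this space, and the computation in the proof of Proposition~\ref{B-psi}, specialized to parameter $0$, shows it implements $\B^{0}_{(\Theta_2)_{a_2}}\cong\B^{0}_{(\Theta_1)_{a_1}}$; combining with \eqref{H/aH} gives $\B^{a_1}_{\Theta_1}\cong\B^{a_2}_{\Theta_2}$.

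For the converse, assume $\B^{a_1}_{\Theta_1}\cong\B^{a_2}_{\Theta_2}$. By \eqref{H/aH} there is a unitary $U$ such that $\Lambda(\cdot):=U(\cdot)U^{*}$ carries $\B^{0}_{(\Theta_1)_{a_1}}$ onto $\B^{0}_{(\Theta_2)_{a_2}}$; taking adjoints and using $(\B^{0}_u)^{*}=\B^{\infty}_u$ gives the analogous statement for $\B^{\infty}$, hence $\Lambda(\B^{0}+\B^{\infty})=\B^{0}+\B^{\infty}$ on the two model spaces. Exactly as in Theorem~\ref{t-main-Sedlock} (approximate $L^{\infty}$ weak-$*$ by Cesaro means, then invoke Proposition~\ref{density-Bess}) the weak closure of $\B^{0}_u+\B^{\infty}_u$ contains $\{A^{u}_{\phi}:\phi\in L^{\infty}\}$, which is weakly dense in $\T_u$, so $\Lambda$ restricts to a spatial isomorphism $\T_{(\Theta_1)_{a_1}}\cong\T_{(\Theta_2)_{a_2}}$. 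Theorem~\ref{most-three-SI} then writes $\Lambda=\Lambda_a\Lambda_\psi$ or $\Lambda=\Lambda_a\Lambda_\#\Lambda_\psi$ (with $a=0$, $\psi=\mathrm{id}$ allowed). The step I expect to be the main obstacle is showing this word collapses to $\Lambda=\Lambda_\psi$: recalling from \eqref{form-Jphi}, \eqref{form-Jsharp}, and Proposition~\ref{delayed} that $\Lambda_\psi$ preserves analytic truncated Toeplitz operators, $\Lambda_\#$ carries analytic operators to co-analytic ones, and a nontrivial $\Lambda_a$ preserves neither class, the fact that $\Lambda(\B^{0}_{(\Theta_1)_{a_1}})=\B^{0}_{(\Theta_2)_{a_2}}$ is again an analytic algebra forces the $\#$- and Crofoot-factors to be trivial. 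Hence $\Lambda=\Lambda_\psi$, so by \eqref{form-Jphi} $\B^{0}_{(\Theta_2)_{a_2}}=\B^{0}_{(\Theta_1)_{a_1}\circ\psi}$, which means $(\Theta_1)_{a_1}\circ\psi=\zeta\,(\Theta_2)_{a_2}$ for some $\zeta\in\dD$; replacing $\psi$ by $\psi^{-1}$ and solving $b_{a_1}\circ\Theta_1=\zeta\,b_{a_2}\circ\Theta_2\circ\psi$ for $\Theta_1$ yields $\Theta_1=b_{-a_1}(\zeta b_{a_2})\circ\Theta_2\circ\psi$.

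For the two remaining cases I would reduce to the one just proved by the $\#$-flip. Proposition~\ref{sharp-B} gives $\B^{a}_{\Theta}\cong\B^{1/a}_{\Theta^{\#}}$. If $a_1,a_2\in\widehat{\C}\setminus\D^{-}$, then $1/a_1,1/a_2\in\D$, so $\B^{a_1}_{\Theta_1}\cong\B^{a_2}_{\Theta_2}$ if and only if $\B^{1/a_1}_{\Theta_1^{\#}}\cong\B^{1/a_2}_{\Theta_2^{\#}}$, and the interior case applied to the pair $\Theta_1^{\#},\Theta_2^{\#}$ gives $\Theta_1^{\#}=b_{-1/a_1}(\zeta b_{1/a_2})\circ(\Theta_2)^{\#}\circ\psi$; if $a_1\in\D$ and $a_2\in\widehat{\C}\setminus\D^{-}$, only $\B^{a_2}_{\Theta_2}$ needs flipping, and the interior case applied to $\Theta_1,\Theta_2^{\#}$ gives $\Theta_1=b_{-a_1}(\zeta b_{1/a_2})\circ(\Theta_2)^{\#}\circ\psi$. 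Beyond this, the only new bookkeeping relative to Theorem~\ref{t-main-Sedlock} is carrying a second inner function through the reductions, which costs nothing.
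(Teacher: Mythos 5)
Your proposal is correct and is essentially the paper's own argument: the paper proves Theorem \ref{main-IJ} simply by noting that the proof of Theorem \ref{t-main-Sedlock} carries over verbatim with two inner functions, which is exactly the modification you carry out (Crofoot reduction via \eqref{H/aH}, the density argument through Proposition \ref{density-Bess} and Theorem \ref{most-three-SI}, the collapse to $\Lambda_{\psi}$, and the $\#$-flip via Proposition \ref{sharp-B} for the exterior and mixed cases).
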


\begin{Remark}
It is worth mentioning again (see Remark \ref{r-never-s-i}) that $\mathcal{B}_{\Theta_1}^{a_1}$, $a_1 \in \dD$, is never spatially isomorphic to $\mathcal{B}_{\Theta_2}^{a_2}$, $a_2 \in \widehat{\C} \setminus \dD$. 
\end{Remark}



\section{Isometric isomorphisms and Pick algebras}
	
	To conclude this paper, we consider the closely related question of whether or not isometric
	isomorphisms of Sedlock algebras are necessarily spatially implemented.
	To be more specific, suppose, for two inner functions $\Theta_1$ and $\Theta_2$ and extended complex numbers $a_1, a_2 \in \widehat{\C}$,
that $\B_{\Theta_1}^{a_1}$ is isometrically isomorphic to $\B_{\Theta_2}^{a_2}$.
	Is it necessarily the case that $\B_{\Theta_1}^{a_1}$ is spatially isomorphic to $\B_{\Theta_2}^{a_2}$?
	In certain cases, the answer is yes.

	\begin{Theorem} \label{main-II-SI}
		If $\Theta_1$ and $\Theta_2$ are finite Blaschke products with $n$ distinct zeros and $a_1, a_2 \in \widehat{\C}$,  then the algebras 
		$\B_{\Theta_1}^{a_1}$ and $\B_{\Theta_2}^{a_2}$ are isometrically isomorphic if and only if they are spatially isomorphic. 
	\end{Theorem}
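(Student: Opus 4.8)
The implication ``spatially isomorphic $\Rightarrow$ isometrically isomorphic'' is immediate, since a spatial isomorphism $A\mapsto UAU^{*}$ is an isometric algebra isomorphism. For the converse the plan is to first use the three basic spatial isomorphisms of Section~3 to reduce to a statement about quotient algebras $H^{\infty}/CH^{\infty}$, and then to invoke Nevanlinna--Pick interpolation.

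First, applying $\Lambda_{\#}$ and Proposition~\ref{sharp-B} (which replaces $\Theta_i$ by $\Theta_i^{\#}$, again a finite Blaschke product with $n$ distinct zeros, and $a_i$ by $1/a_i$) I may assume $a_1,a_2\in\D^{-}$, since $\Lambda_{\#}$ preserves both relations. If $a_1,a_2\in\dD$, then Remarks~\ref{Clark-remark} and \ref{Clark-needed} give $\B_{\Theta_i}^{a_i}\cong L^{\infty}(\mu_{a_i})$, and each $\mu_{a_i}$ has exactly $n$ atoms, so both algebras are isometrically isomorphic to $\C^{n}$ with the supremum norm; they are also spatially isomorphic by Corollary~\ref{nothingtoprove}, so both sides hold. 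If exactly one parameter lies on $\dD$, say $a_1\in\dD$, $a_2\in\D$, and $n\ge 2$, then $\B_{\Theta_1}^{a_1}$ is isometrically isomorphic to a uniform algebra, hence carries the sup-norm on $\C^{n}$, while (by \eqref{H/aH}) $\B_{\Theta_2}^{a_2}$ is spatially isomorphic to $H^{\infty}/CH^{\infty}$ with $\deg C=n\ge 2$, which is either not semisimple (if $C$ has a repeated zero) or carries the Nevanlinna--Pick norm, which by the maximum principle is strictly larger than the sup-norm at a suitable point of $\C^{n}$; so the two are not isometrically isomorphic, and by Remark~\ref{r-never-s-i} not spatially isomorphic either, so the biconditional holds. (If $n=1$ every $\B_{\Theta}^{a}=\C I$ and everything is trivial.)

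There remains the case $a_1,a_2\in\D$. By \eqref{H/aH} the Crofoot transform $\Lambda_{a_i}$ identifies $\B_{\Theta_i}^{a_i}$ spatially with $\B_{C_i}^{0}$, where $C_i=b_{a_i}\circ\Theta_i$ is a finite Blaschke product of degree $n$, and $\varphi\mapsto A_{\varphi}^{C_i}$ identifies $\B_{C_i}^{0}$ isometrically with $H^{\infty}/C_iH^{\infty}$ equipped with its quotient norm (using $\|A_{\varphi}^{C_i}\|=\operatorname{dist}(\varphi,C_iH^{\infty})$, a consequence of the commutant lifting theorem). Since these transforms preserve both ``isometrically isomorphic'' and ``spatially isomorphic'', and since Theorem~\ref{main-IJ} with parameters $0$ asserts that $\B_{C_1}^{0}\cong\B_{C_2}^{0}$ spatially if and only if $C_1=\zeta\,C_2\circ\psi$ for some $\zeta\in\dD$ and a disk automorphism $\psi$, the theorem reduces to the purely function-theoretic claim that $H^{\infty}/C_1H^{\infty}$ and $H^{\infty}/C_2H^{\infty}$ are isometrically isomorphic if and only if $C_1=\zeta\,C_2\circ\psi$. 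The ``if'' direction is once more just the associated spatial isomorphism $\Lambda_{\psi}$.

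For the ``only if'', an isometric algebra isomorphism $\Phi$ is in particular a ring isomorphism, so the decompositions of $H^{\infty}/C_iH^{\infty}$ as products of truncated polynomial algebras $\C[z]/(z^{m})$ must match; after relabelling we may regard $\Phi$ as the identity between the two ring decompositions, so the hypothesis becomes that the two quotient norms coincide. Now for any sub-collection $S$ of at most three of the multiplicity-counted zeros, dividing both algebras by the ideal of elements vanishing (to the appropriate order) off $S$ turns the quotient norm into the optimal value of a small Nevanlinna--Pick problem on $S$ (a Carath\'eodory--Fej\'er problem where $S$ has repeated points); hence these small extremal problems agree for $C_1$ and for $C_2$. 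From the one- and two-node sub-problems one reads off that all pairwise pseudohyperbolic distances of the two zero multisets (together with the relevant jet data at a repeated node) agree; from the three-node sub-problems one must extract enough further information --- encoded in the Pick matrices $[(1-v_i\overline{v_j})/(1-z_i\overline{z_j})]$ rather than merely in the Schwarz--Pick inequality --- to pin the multiset down up to a disk automorphism $\psi$, orientation included, whence $C_1=\zeta\,C_2\circ\psi$. The main obstacle is precisely this last geometric step: proving that the three-node Nevanlinna--Pick norm, together with the labelling, determines the three nodes up to a disk automorphism and not merely up to their pairwise pseudohyperbolic distances (which would leave an orientation ambiguity, as a point triple and its mirror image generally have equal side lengths but are not related by a disk automorphism). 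This forces one to use the full Hermitian structure of the Pick matrix, not just the classical inequality.
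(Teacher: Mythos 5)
Your reductions are sound and largely parallel the paper's: the use of $\Lambda_{\#}$ and Proposition \ref{sharp-B} to normalize the parameters, the Clark-measure argument when both parameters lie on $\dD$, and the norm obstruction in the mixed case (your sup-norm versus Pick-norm comparison plays the same role as the paper's observation that the idempotents $Q_j$ have norm $>1$ while the projections $P_j$ have norm $1$, so an isometric isomorphism between the two types cannot exist). The reduction of the main case $a_1,a_2\in\D$ to the assertion that $H^{\infty}/C_1H^{\infty}$ and $H^{\infty}/C_2H^{\infty}$ are isometrically isomorphic only if $C_1=\zeta\,C_2\circ\psi$ is also the right target, and is equivalent to the formulation the paper uses in terms of the Pick algebras $\mathfrak{U}_{\vec z}$ of Lemma \ref{Sed-Pick}.

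However, there is a genuine gap, and you have located it yourself: the final step, that an isometric (algebra) isomorphism of these finite-dimensional quotient/Pick algebras forces the two node sets to be related by a disk automorphism, is never actually proved in your proposal. Your sketch via two- and three-node Nevanlinna--Pick subproblems only recovers pairwise pseudohyperbolic data, and as you note this leaves the orientation ambiguity unresolved; the claim that the ``full Hermitian structure of the Pick matrix'' pins down the triple up to an automorphism is exactly the assertion that needs a proof, and without it the theorem's hard direction is not established. This missing ingredient is precisely the theorem of Cole, Lewis, and Wermer that the paper invokes: two Pick algebras are isometrically isomorphic if and only if they are spatially isomorphic \cite{CW}. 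The paper's proof consists of the chain of spatial identifications $\B_{\Theta}^{a}\cong\B_{\Theta_a}^{0}\cong\B_{(\Theta_a)^{\#}}^{\infty}\cong\mathfrak{U}_{\vec z}$ followed by a citation of that result; it does not reprove it, and your attempt to rederive it from small interpolation problems stalls at the decisive geometric step. To repair the argument you should either quote \cite{CW} at that point (after checking that the relevant node sets, i.e.\ the solutions of $\Theta_i=a_i$, are distinct so that the algebras are semisimple and Lemma \ref{Sed-Pick} applies), or supply a complete proof that the Pick matrices $\bigl[(1-v_i\overline{v_j})/(1-z_i\overline{z_j})\bigr]$ determine $\{z_1,\dots,z_n\}$ up to a disk automorphism, which is a nontrivial result in its own right.
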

	
	The proof of Theorem \ref{main-II-SI} requires a few preliminaries.
	Fix $n$ \emph{distinct} points $z_1, z_2, \ldots, z_n$ in $\D$ and consider 
	the following inner product on $\C^{n}$: For vectors
	$$\vec{u} = (u_1, u_2, \ldots, u_n), \quad \vec{v} = (v_1, v_2, \ldots, v_n),$$
	in $\C^n$ define
	\begin{equation} \label{IP}
		(\vec{u}, \vec{v})_{\vec{z}} := \sum_{j, k = 1}^{n} \frac{u_j \overline{v_k}}{1 -  z_j \overline{z_k}},
	\end{equation}
	where $\vec{z} = (z_1, z_2, \ldots, z_n)$.  To emphasize the fact that $\C^n$ has been endowed with this 
	inner product, we use the notation $\C^{n}_{\vec{z}}$. 

	For a fixed vector $\vec{w} = (w_1, w_2,\ldots, w_n)$ we define the corresponding diagonal operator 
	$R_{\vec{w}}: \C^{n}_{\vec{z}} \to \C^{n}_{\vec{z}}$ by setting, for $\vec{u} = (u_1, u_2,\ldots, u_n)$,
	\begin{equation*}
		R_{\vec{w}} (\vec{u}) = (u_1 w_1,u_2 w_2, \ldots, u_n w_n).
	\end{equation*}
	  Among other things, it is clear that
	$$R_{\vec{w_1}} R_{\vec{w_2}} = R_{\vec{w_1} \bullet \vec{w_2}}$$ where $\vec{w_1} \bullet \vec{w_2}$ denotes the
	entrywise product of $\vec{w_1}$ and $\vec{w_2}$.  This implies that the set
	\begin{equation*}
		\mathfrak{U}_{{\vec{z}}}:= \{R_{\vec{w}}: \vec{w} \in \C^n\}
	\end{equation*}
	forms an algebra of operators on $\C^{n}_{\vec{z}}$. This algebra, studied by B.~Cole, K.~Lewis, and J.~Wermer \cite{CWL, CW}, 
	is called the \emph{Pick algebra}.

	\begin{Lemma} \label{Sed-Pick}
		If $\Theta$ is a $n$-fold Blaschke product with distinct zeros $\vec{z} = (z_1,z_2, \ldots, z_n)$, then 
		$\B^{\infty}_{\Theta} \cong \mathfrak{U}_{\vec{z}}$.
	\end{Lemma}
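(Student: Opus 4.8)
The plan is to exhibit an explicit unitary between $\K_\Theta$ and $\C^n_{\vec z}$ that carries one algebra onto the other, exploiting the fact that both algebras are ``the algebra of all operators diagonal in a prescribed basis'' of an $n$-dimensional space. Write $\Theta$ as a Blaschke product of degree $n$ with distinct zeros $\vec z=(z_1,\dots,z_n)$, so that $\dim\K_\Theta=n$. Since $\Theta(z_j)=0$, the reproducing kernel \eqref{eq-ReproducingKernel} reduces to $k_{z_j}(z)=(1-\overline{z_j}\,z)^{-1}$, and $\{k_{z_1},\dots,k_{z_n}\}$ is a basis of $\K_\Theta$. First I would record that for $\phi\in H^\infty$ the co-analytic operator $A_{\overline\phi}$ is diagonal in this basis: from the reproducing property $\langle A_\phi f,k_\lambda\rangle=\phi(\lambda)\langle f,k_\lambda\rangle$, so $A_\phi^{*}k_\lambda=\overline{\phi(\lambda)}k_\lambda$, and since $A_\phi^{*}=A_{\overline\phi}$ this gives $A_{\overline\phi}k_{z_j}=\overline{\phi(z_j)}\,k_{z_j}$. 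Because the $z_j$ are distinct, a polynomial may be prescribed arbitrarily at $z_1,\dots,z_n$, so $\{(\overline{\phi(z_1)},\dots,\overline{\phi(z_n)}):\phi\in H^\infty\}=\C^n$; as the operators diagonal in a fixed basis are parametrized exactly by their eigenvalue tuples, this identifies $\B^\infty_\Theta$ with the algebra of \emph{all} operators on $\K_\Theta$ diagonal with respect to $\{k_{z_1},\dots,k_{z_n}\}$. (Equivalently, by Theorem~\ref{Sed-commutant}(ii), $\B^\infty_\Theta=\{A_{\overline z}\}'$, and $A_{\overline z}$ has the $n$ distinct eigenvalues $\overline{z_1},\dots,\overline{z_n}$ with eigenvectors $k_{z_j}$.)

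Running the same argument on $\C^n_{\vec z}$ with the diagonal operator $R_{\vec z}\in\mathfrak U_{\vec z}$, which has distinct eigenvalues $z_1,\dots,z_n$ and eigenvectors the standard basis vectors $e_1,\dots,e_n$, shows that $\mathfrak U_{\vec z}$ is the algebra of all operators on $\C^n_{\vec z}$ diagonal with respect to $\{e_1,\dots,e_n\}$. The natural candidate for the implementing map is therefore
\[
	U\colon \K_\Theta\to\C^n_{\vec z},\qquad U k_{z_j}:=e_j .
\]
Granting that $U$ is unitary, the conclusion is immediate: for $\phi\in H^\infty$,
\[
	U A_{\overline\phi}U^{*}e_j=U A_{\overline\phi}k_{z_j}=\overline{\phi(z_j)}\,e_j ,
\]
so $U A_{\overline\phi}U^{*}=R_{(\overline{\phi(z_1)},\dots,\overline{\phi(z_n)})}$, and letting $\phi$ range over $H^\infty$ the right side exhausts $\mathfrak U_{\vec z}$. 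Hence $U\B^\infty_\Theta U^{*}=\mathfrak U_{\vec z}$, which is the assertion.

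Thus the only substantive point is that $U$ is unitary, i.e.\ that it sends the Gram matrix of $\{k_{z_1},\dots,k_{z_n}\}$ to the Gram matrix of $\{e_1,\dots,e_n\}$. This is a one-line computation: by the reproducing property $\langle k_{z_j},k_{z_k}\rangle_{\K_\Theta}=k_{z_j}(z_k)=(1-\overline{z_j}z_k)^{-1}$, to be matched against $(e_j,e_k)_{\vec z}$ read off from \eqref{IP}. The main thing to watch is the bookkeeping of conjugations between the model inner product and the Pick inner product \eqref{IP}; if the two Gram matrices come out complex-conjugate to one another rather than literally equal, the remedy is to carry out the same construction with the conjugate kernels $Ck_{z_j}=\Theta/(z-z_j)$ of \eqref{Conj-RK} in place of the $k_{z_j}$ — these satisfy $\langle Ck_{z_j},Ck_{z_k}\rangle=\overline{\langle k_{z_j},k_{z_k}\rangle}$ (as $C$ is a conjugation) and, by the complex-symmetry identity $CA_\phi C=A_\phi^{*}$ for truncated Toeplitz operators, are the joint eigenvectors of $\B^0_\Theta$, with the roles of $\B^0_\Theta$ and $\B^\infty_\Theta$ interchanged as dictated by \eqref{Sed-conj-flip}. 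No further ingredients are needed.
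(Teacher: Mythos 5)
Your argument is essentially the paper's own proof: the same kernel basis $\{k_{z_1},\dots,k_{z_n}\}$, the same eigenvalue relation $A_{\overline{\phi}}k_{z_j}=\overline{\phi(z_j)}\,k_{z_j}$, the same interpolation step to see that the eigenvalue tuples exhaust $\C^n$, and the same map $Uk_{z_j}=e_j$ whose unitarity is to be read off from \eqref{IP}; the detail you add is exactly what the paper compresses into ``now use interpolation.''

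One comment on the point you left open. With \eqref{IP} taken literally the two Gram matrices are indeed complex conjugates of one another, since $\langle k_{z_j},k_{z_k}\rangle_{\K_{\Theta}}=(1-\overline{z_j}z_k)^{-1}$ while $(e_j,e_k)_{\vec{z}}=(1-z_j\overline{z_k})^{-1}$, so your worry is justified; the paper's proof asserts the unitarity of $U$ without verification and carries the same conjugation slip, which is harmless because the statement (and its use in Theorem \ref{main-II-SI}) is insensitive to replacing $\vec{z}$ by $\overline{\vec{z}}$, equivalently to reading \eqref{IP} with $1-\overline{z_j}z_k$ in the denominator. Your proposed fallback, however, does not repair the statement as written: switching to the conjugate kernels $Ck_{z_j}$ of \eqref{Conj-RK} does match the Gram matrix of \eqref{IP}, but the resulting unitary carries $\B_{\Theta}^{0}$, not $\B_{\Theta}^{\infty}$, onto $\mathfrak{U}_{\vec{z}}$ (since $A_{\phi}Ck_{z_j}=\phi(z_j)Ck_{z_j}$), and \eqref{Sed-conj-flip} cannot then ``interchange the roles'': $A\mapsto A^{*}$ is conjugate-linear and is not implemented by any unitary, and in fact $\B_{\Theta}^{0}\cong\B_{\Theta}^{\infty}$ holds only under the symmetry condition $\Theta=\zeta\,\Theta^{\#}\circ\psi$ of Corollary \ref{c-ext-Sedlock-main}. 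So the correct bookkeeping is to adjust the inner product (or pass from $\vec{z}$ to $\overline{\vec{z}}$), not to swap the Sedlock algebra.
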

	
	\begin{proof}
		It is well-known that the reproducing kernels 
		\begin{equation*}
			k_{z_j}(z) := \frac{1}{1 - \overline{z_j} z}, \tag{$1 \leq j \leq n$}
		\end{equation*}
		from \eqref{eq-ReproducingKernel} form a (non-orthogonal) basis for the model space $\K_{\Theta}$. 
		Define the \emph{unitary} operator $U: \K_{\Theta} \to \C^{n}_{\vec{z}}$ by setting
		\begin{equation*}
			U\left(\sum_{j = 1}^{n} a_j k_{z_j} \right)= (a_1, a_2,\ldots, a_n).
		\end{equation*}
		The fact that $U$ is unitary comes from the fact that $\C^{n}_{\vec{z}}$ is equipped with the 
		inner product in \eqref{IP}.  Since	
		\begin{equation*}
			A_{\overline{\phi}} k_{z_j} = \overline{\phi(z_j)} k_{z_j}
		\end{equation*}
		holds for $\phi$ in $H^{\infty}$, we have
		\begin{align*}
			U A_{\overline{\phi}} \left(\sum_{j = 1}^{n} a_j k_{z_j} \right) 
			& = (\overline{\phi(z_1)} a_1,\overline{\phi(z_2)} a_2, \ldots, \overline{\phi(z_n)} a_n)\\
			&  = R_{\vec{w}} (a_1, a_2,\ldots, a_n)\\
			& = R_{\vec{w}} U \left(\sum_{j = 1}^{n} a_j k_{z_j} \right),
		\end{align*}
		where $\vec{w} = (\overline{\phi(z_1)}, \overline{\phi(z_2)},\ldots, \overline{\phi(z_n)})$. 
		Now use interpolation to show that 
		\begin{equation*}
			U \B_{\Theta}^{\infty} U^{*} = \mathfrak{U}_{\vec{z}}.
		\end{equation*}
		Hence $\B_{\Theta}^{\infty} \cong \mathfrak{U}_{\vec{z}}$. 
	\end{proof}

	The proof of Theorem \ref{main-II-SI} requires one more little detail. For fixed $a \in \D$, let $w_1,w_2, \ldots, w_n$ be distinct points in $\D$ which satisfy $\Theta(w_j) = a$.
	As Sedlock demonstrated, the operators
	\begin{equation*}
		Q_j := \frac{1}{\Theta'(w_j)} C k_{w_j} \otimes k_{w_j},  \tag{$j = 1, 2,\ldots, n$}
	\end{equation*}
	belong to $\B_{\Theta}^{a}$.  Moreover, it is not hard to show that the $Q_j$
	are idempotents which form a non-orthogonal resolution of the identity:
	\begin{equation*}
		Q_{j}^2 = Q_j, \quad 
		\sum_{j = 1}^{n} Q_j = I, \quad 
		Q_j Q_l = \delta_{j, l} Q_j, \quad 
		\T_{\Theta} = \bigvee_{j=1}^n\{Q_j, Q_{j}^{*}\}.
	\end{equation*}
	Since $Q_{j}^{*} \in \mathcal{B}_{\Theta}^{1/\overline{a}}$ we see that 
	\begin{equation*}
		\B_{\Theta}^{a} = \bigvee_{j=1}^n \{Q_{j}\}.
	\end{equation*}
	Furthermore, since each $Q_j$ is a non-selfadjoint idempotent we also have
	\begin{equation*}
		\|Q_{j}\| > 1 . \tag{$j = 1, \ldots, n$}
	\end{equation*}

	The setup for the case $a \in \dD$ is handled in a similar manner.  Indeed, if $a \in \dD$, let 
	$\zeta_1,\zeta_2, \ldots, \zeta_n$ be the distinct (necessarily unimodular) solutions to 
	the equation $\Theta(\zeta_j) = a$.  As before, Sedlock shows that the orthogonal projections
	\begin{equation*}
		P_{j} = \frac{1}{\sqrt{\Theta'(\zeta_j)}} k_{\zeta_j} \otimes k_{\zeta_j}, \tag{$j=1,2,\ldots,n$}
	\end{equation*}
	belong to $\B_{\Theta}^a$.  Moreover, we also observe that the $P_j$ form a resolution of the identity
	\begin{equation*}
		P_{j}^2 = P_j, \quad 
		\sum_{j = 1}^{n} P_j = I, \quad 
		P_j P_l = \delta_{j, l} P_j, \quad 
		\T_{\Theta} = \bigvee_{j=1}^n \{P_j, P_{j}^{*}\},
	\end{equation*}
	and that
	\begin{equation*}
		\B_{\Theta}^{a} = \bigvee_{j=1}^n \{P_{j}\}.
	\end{equation*}
	Furthermore, each $P_j$ is an orthogonal projection whence $\norm{P_j} = 1$.

We are now ready to finish off the proof of Theorem \ref{main-II-SI}. 

\begin{proof}[Proof of Theorem \ref{main-II-SI}]: For a finite Blaschke product $\Theta$ with\emph{ distinct} zeros and $a \in \D$ we have 
\begin{align*}\hspace{1.5in}
\mathcal{B}_{\Theta}^{a} & \cong \mathcal{B}_{\Theta_a}^{0} && \mbox{(by \eqref{H/aH}) }\\
& \cong \mathcal{B}_{(\Theta_a)^{\#}}^{\infty} && \mbox{(by \eqref{sharp})}\\
& \cong \mathfrak{U}_{\vec{z}} && \mbox{(by Proposition \ref{Sed-Pick})}
\end{align*}
where $\vec{z}$ is the vector of distinct zeros of $(\Theta_{a})^{\#}$. For $a \in \widehat{\C} \setminus \D^{-}$, 
\begin{align*}\hspace{1.65in}
\mathcal{B}_{\Theta}^{a} & \cong \mathcal{B}_{\Theta^{\#}}^{1/a} && \mbox{(by \eqref{sharp})}\\
& \cong \mathcal{B}_{(\Theta^{\#})_{1/a}}^{0} && \mbox{(by \eqref{H/aH})}\\
& \cong \mathcal{B}_{((\Theta^{\#})_{1/a})^{\#}}^{\infty} && \mbox{(by \eqref{sharp})}\\
& \cong \mathfrak{U}_{\vec{z}}, && \mbox{(by Proposition \ref{Sed-Pick})}
\end{align*}
where $\vec{z}$ is the vector of distinct zeros of $((\Theta^{\#})_{1/a})^{\#}$. 

Now suppose that $a_1, a_2 \in \widehat{\C} \setminus \dD$ with $\mathcal{B}_{\Theta_1}^{a_1}$ and $\mathcal{B}_{\Theta_2}^{a_2}$ isometrically isomorphic. Then, by the computation above, their corresponding Pick algebras are isometrically isomorphic. 
However, two Pick algebras are isometrically isomorphic if and only if they are spatially isomorphic \cite{CW},
whence, by the above computations, $\B_{\Theta_1}^{a_1} \cong \B_{\Theta_2}^{a_2}$.

If $a_1, a_2 \in \dD$, then, by Corollary \ref{nothingtoprove}, $\mathcal{B}_{\Theta_1}^{a_1} \cong \mathcal{B}_{\Theta_2}^{a_2}$ and so there is nothing to prove. 

If $a_1 \in \dD$ and $a_2 \in \widehat{\C} \setminus \dD$ we see, using the above discussion,  that any isometric isomorphism will map $Q_j$ to $P_{\sigma(j)}$, for some permutation $\sigma$ of $\{1, 2, \ldots, n\}$. But since $\|P_{\sigma(j)}\| = 1$ and $\|Q_{j}\| > 1$, we see that this case never arises. The proof is now complete. 
\end{proof}

An interesting application to this theorem is the following Corollary.

\begin{Corollary} \label{quotient}
Suppose that $\Theta_1$ and $\Theta_2$ are finite Blaschke products with $n$ distinct zeros. Then the quotient algebras $H^{\infty}/\Theta_1 H^{\infty}$ and $H^{\infty}/\Theta_2 H^{\infty}$ are isometrically isomorphic if and only if there is a unimodular constant $\zeta$ and a disk automorphism $\psi$ so that $\Theta_1= \zeta \Theta_2 \circ \psi$.
\end{Corollary}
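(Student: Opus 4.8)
The plan is to identify each quotient algebra $H^{\infty}/\Theta_i H^{\infty}$ isometrically with the Sedlock algebra $\mathcal{B}_{\Theta_i}^{0}$ of analytic truncated Toeplitz operators, and then feed that into Theorems \ref{main-II-SI} and \ref{main-IJ}.

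First I would observe that $\phi \mapsto A_{\phi}^{\Theta}$ is a surjective algebra homomorphism from $H^{\infty}$ onto $\mathcal{B}_{\Theta}^{0}$ with kernel exactly $\Theta H^{\infty}$: for $\phi \in H^{\infty}$ one has $A_{\phi}^{\Theta} = 0$ if and only if $\phi \in \Theta H^{2}$, and for a bounded $\phi$ this forces $\phi \in \Theta H^{\infty}$. So $\phi + \Theta H^{\infty} \mapsto A_{\phi}^{\Theta}$ is an algebra isomorphism $H^{\infty}/\Theta H^{\infty} \to \mathcal{B}_{\Theta}^{0}$. What makes this useful here is that it is moreover \emph{isometric}: since $\mathcal{B}_{\Theta}^{0}$ is the commutant of the compressed shift $A_{z}$ on $\mathcal{K}_{\Theta}$, the commutant lifting theorem (equivalently, the Nevanlinna--Pick bound) yields $\|A_{\phi}^{\Theta}\| = \operatorname{dist}_{\infty}(\phi, \Theta H^{\infty})$, which is precisely the quotient norm of $\phi + \Theta H^{\infty}$. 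Hence $H^{\infty}/\Theta_1 H^{\infty}$ and $H^{\infty}/\Theta_2 H^{\infty}$ are isometrically isomorphic as Banach algebras if and only if $\mathcal{B}_{\Theta_1}^{0}$ and $\mathcal{B}_{\Theta_2}^{0}$ are.

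Then I would invoke the machinery already developed. Because $\Theta_1,\Theta_2$ are finite Blaschke products with $n$ distinct zeros, Theorem \ref{main-II-SI} with $a_1 = a_2 = 0$ gives that $\mathcal{B}_{\Theta_1}^{0}$ and $\mathcal{B}_{\Theta_2}^{0}$ are isometrically isomorphic if and only if they are spatially isomorphic, and Theorem \ref{main-IJ} with $a_1 = a_2 = 0$ gives that spatial isomorphism holds if and only if $\Theta_1 = b_{-0}(\zeta b_{0}) \circ \Theta_2 \circ \psi$ for some unimodular $\zeta$ and disk automorphism $\psi$. Since $b_{0}(z) = z$ and $b_{-0}(w) = w$, the right-hand side collapses to $\zeta\,\Theta_2 \circ \psi$, and chaining the equivalences yields the corollary. (The reverse implication can also be checked by hand: if $\Theta_1 = \zeta\,\Theta_2 \circ \psi$, then $U_{\psi}$ together with a unimodular scaling implements an isometric algebra isomorphism of the two quotients.)

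The only delicate point is the isometric identification $\|A_{\phi}^{\Theta}\| = \operatorname{dist}_{\infty}(\phi, \Theta H^{\infty})$; this is where commutant lifting is genuinely needed, and it is exactly what promotes the purely algebraic quotient map to an isometry so that the notion of isometric isomorphism in Theorem \ref{main-II-SI} can be applied. Everything else is bookkeeping, including the trivial identity $b_{-0}(\zeta b_{0})(z) = \zeta z$.
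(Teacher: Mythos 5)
Your proposal is correct and takes essentially the same route as the paper: one identifies $H^{\infty}/\Theta H^{\infty}$ isometrically with $\mathcal{B}_{\Theta}^{0}$ via the norm identity $\|A_{\phi}^{\Theta}\| = \operatorname{dist}(\phi, \Theta H^{\infty})$ and then applies Theorems \ref{main-II-SI} and \ref{main-IJ} with $a_1 = a_2 = 0$. The only cosmetic difference is that you justify the norm identity by commutant lifting, whereas the paper cites extremal problems or Hankel operators; these are equivalent standard derivations of the same fact.
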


\begin{proof}
By means of extremal problems \cite{NLEPHS} or Hankel operators \cite{MR2597679} one can show,
for any inner function $\Theta$ and $\phi \in H^{\infty}$, that $$\|A_{\phi}\| = \mbox{dist}(\phi/\Theta, H^{\infty}).$$ This means that $\mathcal{B}_{\Theta}^{0}$ is isometrically isomorphic to $H^{\infty}/\Theta H^{\infty}$. The corollary now follows from Theorem \ref{main-II-SI} and Theorem \ref{main-IJ} . 
 \end{proof}

\bibliography{SIATTO}

\end{document}